\documentclass[10pt]{article}
\usepackage{amsmath, amsthm,amssymb}
\usepackage{bm}
\usepackage{bbm}
\usepackage{xcolor}
\usepackage[hidelinks]{hyperref} 
\usepackage{makeidx}
\usepackage{enumerate}
\usepackage{stmaryrd}
\usepackage{tikz,color,enumitem}
\numberwithin{equation}{section}
\textwidth 6in
\oddsidemargin 0.25in
\evensidemargin 0.25in

\date{}

\newtheorem{thm}{Theorem}[section]
\newtheorem{thma}{Theorem}

\newtheorem{prop}[thm]{Proposition}
\newtheorem{cor}[thm]{Corollary}
\newtheorem{lem}[thm]{Lemma}

\newtheorem{claim}[thm]{Claim}

\newtheorem{conjecture}[thm]{Conjecture}
\newtheorem{conj}[thma]{Conjecture}


\newtheorem{defn}[thm]{Definition}

\newcommand{\ca}{\curvearrowright}
\newcommand{\G}{\Gamma}
\newcommand{\La}{\Lambda}

\newcommand{\ra}{{\rightarrow}}

\newcommand{\euu}{\mathcal{U}}
\newcommand{\paP}{\mathcal{P}}

\newcommand{\mc}{\mathbb{C}}
\newcommand{\Sg}{\Sigma}
\newcommand{\email}{Email: } 
\begin{document}

\title{Invariant subalgebras of von Neumann algebras arising from negatively curved groups}
\author{Ionu\c t Chifan, Sayan Das, Bin Sun}

\maketitle

\begin{abstract} 
\noindent Using an interplay between geometric methods in group theory and soft von Neuman algebraic techniques we prove that for any icc, acylindrically hyperbolic group $\Gamma$ its von Neumann algebra $L(\Gamma)$ satisfies the so-called ISR property: \emph{any von Neumann subalgebra $N\subseteq L(\Gamma)$ that is normalized by all group elements in $\Gamma$ is of the form $N= L(\Sigma)$ for a normal subgroup $\Sigma \lhd \Gamma$.}  In particular, this applies to all groups $\Gamma$ in each of the  following classes:  all icc (relatively) hyperbolic groups, most mapping class groups of surfaces, all outer automorphisms of free groups with at least three generators, most graph product groups arising from simple graphs without visual splitting, etc. This result answers positively an open question of Amrutam and Jiang from \cite{AJ22}.

\noindent In the second part of the paper we obtain similar results for factors associated with groups that admit nontrivial (quasi)cohomology valued into various natural representations.  In particular, we establish the ISR property for all icc, nonamenable groups that have positive first $L^2$-Betti number and contain an infinite amenable subgroup.         
 \end{abstract}

\section{Introduction}

Given a locally compact group $G$, and a continuous homomorphism of $G$ into the group of $\ast$-automorphisms of an operator algebra $\mathcal A$, say $\Phi: G \rightarrow {\rm Aut}(\mathcal A) $, one associates a triple $(\mathcal A, \Phi, G)$ called a \textit{covariant system}. The notion of a covariant system is a generalization of Murray and von Neumann's well-known \textit{group measure space} construction \cite{MvN36}. Due to its connection with the mathematical formulation of quantum field theory and statistical mechanics, covariant systems have been well studied over the years, see \cite{BR79, DKR66, Tak67, JQ91}.

\vskip 0.07in
\noindent The question of Galois correspondence for covariant group systems was initiated by Takesaki and Tatsuuma in \cite{TT71}, where they established that for a locally compact group $G$ there is a one-to-one correspondence between closed normal subgroups of $G$ and invariant von Neumann subalgebras of $L^{\infty}(G)$. 
In \cite{JQ91}, Jorgensen and Quan further developed the study of Galois correspondence for covariant group systems associated with a locally compact group $G$. They established that there is a one-to-one correspondence between $G$-invariant $\ast$-subalgebras of $L^1(G)$ and the lattice of normal subgroups of $G$, see \cite[Theorem 4.7]{JQ91}.  They also established a one-to-one correspondence between invariant Hopf $C^*$-subalgebras of $C^*_r(G)$ and the lattice of normal subgroups of $G$, see \cite[Theorem 5.10]{JQ91}.

\vskip 0.07in
\noindent A natural analogue of the aforementioned Galois correspondence results in the setting of discrete groups is to study the structure of invariant von Neumann subalgebras of the associated group von Neumann algebra. This study was undertaken by two of the authors of this paper in \cite{CD19}, where the structure of $\G$-invariant factors inside $L(\G)$, for $\G$ an icc discrete group, was investigated. In particular, it was established that \cite[Theorem 3.15]{CD19} for any $\G$-invariant $\rm II_1$ factor $N \subset L(\G)$ there exists a normal subgroup $\La \lhd \G$, with $N \subseteq L(\La) \subseteq N \vee N' \cap L(\G)$. In particular, $\G$-invariant irreducible subfactors of $L(\G)$ can only arise from normal subgroups of $\G$.
\vskip 0.07in
\noindent Around the same time, motivated by Peterson's seminal work \cite{Pet16} on noncommutative Margulis normal subgroup theorem via character rigidity and noncommutative Poisson boundaries (see also \cite{CP13, CP18, DP22}), Alekseev and Brugger \cite{AB21} studied $\G$-invariant subfactors of $L(\G)$, where  $\G$ is an irreducible lattice in a simple higher rank Lie-group, and established that any $\G$-invariant subfactor of $L(\G)$ has finite Jones index. This result was established by adapting Peterson's techniques, though as mentioned in \cite{AB21}, the result can also be derived by combining \cite[Theorem 3.15]{CD19}, with Margulis' normal subgroup Theorem. 

\vskip 0.07in
\noindent Motivated by the results in \cite{Pet16, AB21, CD19}, Kalantar and Panagopoulos \cite{KP21} showed that any $\G$-invariant von Neumann subalgebras of $L(\G)$ arise as the group von Neumann algebra of a normal subgroup, with $\G$ being an irreducible lattice in a connected semi-simple Lie group $G$ with trivial center, no nontrivial compact factors, and such that all simple factors of $G$ are higher rank. This result as well as \cite{CD19} further inspired Amrutam and Jiang to study invariant von Neumann subalgebras of $L(\G)$ for other classes of groups complementary to lattices in higher rank groups \cite{AJ22}. Specifically they established the ISR property for $L(\G)$, whenever $\G$ is a nonamenable torsion free group that is either hyperbolic, or has positive first $L^2$-Betti number and satisfies Peterson-Thom's condition ($\ast$) in \cite{PT11}. In their investigation they also raised the question whether the ISR property holds true for all factors associated to all icc hyperbolic groups or, more generally, all icc acylindrically hyperbolic groups \cite{Osi16}. These two questions were precisely the impetus of our investigation.  
 \vskip 0.07in

\noindent However, despite all these results, there are still many remarkable classes of groups for which it was not known whether their corresponding group factors satisfy the ISR property. Making use of Theorem \ref{thm: invariant factors} and the aforementioned strategy, in this paper we investigate $\Gamma$-invariant subalgebras of factors $L(\Gamma)$ arising from natural classes of ``negatively curved'' groups $\Gamma$ intensively studied in the geometric and representation theory of groups.

\vskip 0.07in
\noindent In the first part of the paper, using a combinatorial condition that relies on geometric word analysis and  $n$-qons inequalities for hyperbolically embedded subgroups from \cite{Osi07,DGO17} in conjunction with a generic von Neumann algebraic argument (see also Theorem \ref{thm: invariant factors}), we establish the ISR property for the von Neumann algebras of all icc, acylindrically hyperbolic groups, \cite{Osi16}. 

\begin{thma}\label{b} Let $\Gamma$ be any icc,  acylindrically hyperbolic group. Then for any von Neumann subalgebra $N\subseteq L(\Gamma)$ satisfying $\Gamma \subset \mathcal N_{L(\Gamma)}(N)$, one can find a normal subgroup $\Sigma \lhd \Gamma$ such that $N= L(\Sigma)$.  

\end{thma}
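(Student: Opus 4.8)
The plan is to reduce the general statement to the case of an invariant $\mathrm{II}_1$ factor, where Theorem \ref{thm: invariant factors} applies, and then to feed that theorem a suitable geometric input coming from the hyperbolically embedded structure. Concretely, let $N \subseteq L(\Gamma)$ be a von Neumann subalgebra normalized by $\Gamma$. First I would reduce to the case where $N$ is a $\mathrm{II}_1$ factor (or handle the atomic/amenable pieces separately): the center $\mathcal Z(N)$ is globally $\Gamma$-invariant, and using that an icc acylindrically hyperbolic group has no nontrivial finite normal subgroups (indeed its amenable radical is trivial) together with a Galois-correspondence bookkeeping argument, one should be able to argue that a $\Gamma$-invariant abelian subalgebra of $L(\Gamma)$ must be $\mathbb{C}1$, forcing $N$ to be a factor; one also checks $N$ cannot be of type other than $\mathrm{II}_1$ since it sits inside the $\mathrm{II}_1$ factor $L(\Gamma)$. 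Granting this, Theorem \ref{thm: invariant factors} produces a normal subgroup $\Sigma \lhd \Gamma$ with $N \subseteq L(\Sigma) \subseteq N \vee (N' \cap L(\Gamma))$, so it remains only to upgrade the inclusion $N \subseteq L(\Sigma)$ to an equality.

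The core of the argument is therefore to show $L(\Sigma) \subseteq N$, equivalently that the relative commutant contribution $N' \cap L(\Gamma)$ cannot genuinely enlarge $N$ inside $L(\Sigma)$; the mechanism must be the \emph{combinatorial/geometric} one advertised in the introduction, built from $n$-gon inequalities for hyperbolically embedded subgroups from \cite{Osi07, DGO17}. The idea I would pursue: write the (trace-preserving conditional) expectation of an arbitrary $x \in L(\Sigma)$ onto $N$ and analyze its Fourier support. Since $\Gamma$ is acylindrically hyperbolic and icc, it contains a hyperbolically embedded virtually free (in fact one can take loxodromic) subgroup $H \hookrightarrow_h \Gamma$, and one has strong control of geodesics and of how group elements outside large balls interact. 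The key combinatorial lemma to establish is that for $g \in \Gamma$ lying ``deep'' relative to $H$ (word length with respect to $H$ large), there exist many conjugates $s g s^{-1}$, $s \in \Gamma$, whose pairwise products remain spread out — a consequence of the $n$-gon inequality — which, averaged, forces any $\Gamma$-invariant subspace containing a nonzero such $g$-component to contain a large orthogonal family, and ultimately that the $N$-invariant subspace structure is ``group-like''. Running this against the conclusion of Theorem \ref{thm: invariant factors} then forces every group-element coefficient of $x \in L(\Sigma)$ to already lie in the support of $N$, yielding $L(\Sigma) = N$.

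More precisely, here is the order of steps I would carry out. \textbf{Step 1:} Record basic structure — $\Gamma$ icc acylindrically hyperbolic $\Rightarrow$ trivial amenable radical, no nontrivial finite normal subgroups, and $L(\Gamma)$ a $\mathrm{II}_1$ factor. \textbf{Step 2:} Show any $\Gamma$-invariant von Neumann subalgebra $N$ has trivial center, hence is a $\mathrm{II}_1$ factor (handle the possibility of a minimal projection using icc-ness of $\Gamma$ acting on it). \textbf{Step 3:} Apply Theorem \ref{thm: invariant factors} to get $\Sigma \lhd \Gamma$ with $N \subseteq L(\Sigma) \subseteq N \vee (N' \cap L(\Gamma))$; note $\Sigma$ is infinite and normal, hence (by icc-ness of $\Gamma$ and acylindrical hyperbolicity) either finite — excluded — or itself ``large''. \textbf{Step 4 (the heart):} Establish the combinatorial separation lemma for hyperbolically embedded subgroups via the $n$-gon inequalities of \cite{Osi07, DGO17}, phrased so that it yields a von Neumann–algebraic rigidity statement: a $\Gamma$-normalized subalgebra of $L(\Sigma)$ that contains $N$ and is contained in $N \vee (N' \cap L(\Gamma))$ must equal $L(\Sigma)$ or equal $N$, and the second alternative is what we want — here one leverages that $N' \cap L(\Gamma)$ is forced to be trivial because a nontrivial relative commutant would have to be supported, via the geometry, on a finite normal subgroup, contradicting Step 1. \textbf{Step 5:} Conclude $N = L(\Sigma)$.

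The main obstacle I anticipate is \textbf{Step 4}: extracting from the purely metric $n$-gon inequalities a clean statement controlling Fourier coefficients of elements of $L(\Gamma)$ that are normalized by $\Gamma$, uniformly enough to kill the relative commutant. The subtlety is that acylindrical hyperbolicity only gives the hyperbolically embedded subgroup ``at infinity'', so the geometric control is asymptotic and one must carefully handle the bounded error terms (the constants in the $n$-gon inequalities) when translating into an operator-norm or $2$-norm estimate; this is presumably exactly where the ``geometric word analysis'' alluded to in the introduction does its work, and where most of the technical effort of the paper concentrates.
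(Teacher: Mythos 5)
Your overall architecture matches the paper's: reduce to the factor case, invoke Theorem \ref{thm: invariant factors} to produce $\Sigma$, and use the $n$-gon inequalities for hyperbolically embedded subgroups to eliminate the relative commutant. However, there is a genuine gap at your Step 2, and it occurs precisely where the geometric input is indispensable. You claim that a $\Gamma$-invariant abelian subalgebra of $L(\Gamma)$ must be $\mathbb C 1$ ``using that an icc acylindrically hyperbolic group has no nontrivial finite normal subgroups together with a Galois-correspondence bookkeeping argument.'' This cannot work: for a general icc group, $\Gamma$-invariant diffuse abelian subalgebras need not come from subgroups at all --- the paper records examples such as $\overline{\otimes}_\Lambda B \subseteq L(\Sigma^{(\Lambda)})$ inside $L(\Sigma \wr \Lambda)$ --- and the soft arguments only give the dichotomy ``trivial or diffuse'' (Corollary \ref{thm:invariantabelian}, resting on Proposition \ref{nofindiminv}). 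Ruling out the diffuse case is exactly where negative curvature enters. The paper does this through Theorem \ref{thm:acylindricallyinvfactor}: if $C<\Gamma$ is an infinite weakly malnormal subgroup satisfying the separation condition $F(C\setminus K)F(C\setminus K)\cap (C\setminus K)F(C\setminus K)F=\emptyset$ (supplied for acylindrically hyperbolic groups by Theorem \ref{8gon} via the $n$-gon inequality of Dahmani--Guirardel--Osin), then \emph{two commuting $\Gamma$-invariant subalgebras cannot both be diffuse}. Applied with $A_1=A_2=\mathcal Z(N)$ this forces $\mathcal Z(N)$ to be atomic, hence trivial, so $N$ is a factor. Your Step 2 as written would fail without this.

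The same dichotomy is what makes Step 4 work, and here your proposed mechanism drifts into something that cannot be checked. You suggest analyzing Fourier supports of conditional expectations, producing ``many conjugates whose pairwise products remain spread out,'' and concluding that a nontrivial relative commutant ``would have to be supported on a finite normal subgroup.'' The paper instead applies Theorem \ref{thm:acylindricallyinvfactor} a second time, now to the commuting pair $N$ and $N'\cap L(\Sigma)$ (both are factors, since $\Sigma$ is icc by triviality of the amenable radical of $\Gamma$): one of the two must be atomic, hence a matrix algebra, hence equal to $\mathbb C1$ by Proposition \ref{nofindiminv} since it is $\Gamma$-invariant and $\Gamma$ is icc; this gives $N=L(\Sigma)$ (or $N=\mathbb C1$). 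The proof of the dichotomy itself is a concrete $\|\cdot\|_2$-estimate: one shows $A_j\not\prec_{L(\Gamma)}L(C)$ using weak malnormality and Proposition \ref{prop:hpvcor7}, picks unitaries $a_j$ with small expectation onto $L(C)$, approximates them by elements supported on finite sets $F\subset\Gamma\setminus C$ via Kaplansky density, and then uses the separation condition to make the inner product $\langle u_g a^1_{\varepsilon}u_g^* a^2_{\varepsilon}, a^2_{\varepsilon}u_ga^1_{\varepsilon}u_g^* \rangle$ vanish for suitable $g\in C$, contradicting $\|u_ga_1u_g^*a_2\|_2=1$. The missing idea in your proposal is thus the precise formulation of the geometric obstruction as a statement about \emph{pairs of commuting $\Gamma$-invariant subalgebras}, used twice; without it both Step 2 and Step 4 remain unproved.
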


\noindent This result generalizes several of the prior results, (\cite[Theorem 1.2]{AJ22}, \cite[Theorem 3.15, Corollary 3.17]{CD19}) and it also answers positively the second aforementioned question of Amrutam-Jiang, (see  \cite[Question 5.1]{AJ22}). We notice that Theorem \ref{b} applies to a vast category of geometric groups including all icc hyperbolic groups, all icc relative hyperbolic groups, most mapping class groups, all outer automorphism groups of free groups with at least three generators, all nontrivial graph product groups whose underlying graph does not admit a visual splitting, etc.  To this end, we emphasize that our approach for Theorem \ref{b} is very different in essence from the deformation/rigidity theory methods employed in \cite{CD19}, using more generic von Neumann algebraic technique (Theorems \ref{thm: invariant factors} and \ref{thm:acylindricallyinvfactor}) in combination to very strong group theoretic properties of the hyperbolically embedded subgroups of these groups (Theorem \ref{8gon}). We mention in passing that since icc acylindrically hyperbolic groups $\Gamma$ do not have nontrivial normal amenable subgroups, Theorem \ref{b} yields that $L(\Gamma)$ also does not have nontrivial amenable von Neumann subalgebras that is normalized by $\Gamma$.

\vskip 0.07in

\noindent In the second part of the paper, we shift our perspective and study $\Gamma$-invariant subalgebras of $L(\Gamma)$ via methods similar to the ones used in \cite{CD19}.  As in that case, the negative curvature information we heavily exploit, is the existence of unbounded (quasi)cocycles on $\Gamma$ that are valued into its left regular representation. Even though our techniques are largely based on the analysis of (quasi)-cocycles and arrays maps developed within the deformation/rigidity theory framework \cite{Pet09,Pet09b, CP10,Si10,Va10,CS11,CSU11,CSU13,CKP15}, we are able to refine some of them and as a consequence we obtained more general results when compared to \cite[Theorem 3.16]{CD19}. Our result is the following

\begin{thma}\label{a}	Let $\Gamma$ be an icc group satisfying one of the following conditions:
	 \begin{enumerate}
	 \item [1)] $\Gamma$ admits an unbounded, non-proper  $1$-cocycle into a mixing representation; 
	\item [2)] $\Gamma$ is exact, torsion free  and admits an unbounded quasi-cocycle into a weakly-$\ell^2$ mixing representation; 
	  
	\end{enumerate}
	For any von Neumann subalgebra  $N \subseteq L(\Gamma)$ satisfying $\Gamma \subseteq \mathcal N_{L(\Gamma)}(N) $, one can find a normal subgroup $\Sigma\lhd \Gamma$ such that $N=L(\Sigma)$.

\end{thma}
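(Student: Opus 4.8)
The plan is to reduce both cases to the abstract criterion of Theorem \ref{thm: invariant factors}, which says roughly that an invariant subalgebra $N\subseteq L(\Gamma)$ sits between $L(\Sigma)$ and $L(\Sigma)\vee(L(\Sigma)'\cap L(\Gamma))$ for a normal subgroup $\Sigma\lhd\Gamma$; the remaining task is then to force the two ends of this sandwich to coincide and to rule out the possibility that $N$ fails to be a factor. So the first step is purely von Neumann algebraic: given $N\subseteq L(\Gamma)$ with $\Gamma\subseteq\mathcal N_{L(\Gamma)}(N)$, decompose $N$ over its center and apply Theorem \ref{thm: invariant factors} to the factorial pieces, obtaining a normal subgroup $\Sigma$ with $L(\Sigma)\subseteq N\subseteq L(\Sigma)\vee(L(\Sigma)'\cap L(\Gamma))$. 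The content of the theorem is then that, under either hypothesis 1) or 2), this relative commutant contribution is trivial, i.e. $N=L(\Sigma)$, and moreover $\Sigma$ is uniquely determined.

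The core analytic input is the (quasi-)cocycle. The plan is to use the cocycle $b:\Gamma\to H_\pi$ (respectively the quasi-cocycle in the exact, torsion-free case) to run a deformation/rigidity argument à la Peterson–Sinclair on the inclusion $N\subseteq L(\Gamma)$. Concretely, one builds from $b$ an array / closable derivation on $L(\Gamma)$ valued in the $L(\Gamma)$-bimodule associated to $\pi\otimes(\text{left regular})$, and uses the fact that $b$ is unbounded to deduce that $N$ cannot be ``too large'' transversally: any subalgebra on which the derivation is bounded must be captured by a subgroup algebra. Mixing of $\pi$ (resp. weak $\ell^2$-mixing in case 2) is exactly what guarantees that the associated bimodule is sufficiently ``orthogonal'' to the coarse bimodule away from amenable/compact directions, so that a bounded-derivation conclusion on a diffuse subalgebra propagates to a control on its quasi-normalizer. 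Applying this to the relative commutant $L(\Sigma)'\cap L(\Gamma)$: if $\Sigma$ were such that this relative commutant were nontrivial and not already inside $L(\Sigma)$, one would produce a diffuse subalgebra of $L(\Gamma)$ with large quasi-normalizer on which the derivation coming from $b$ is bounded, contradicting non-properness/non-boundedness of $b$ together with the fact that $b$ does not vanish on the relevant subgroup. The non-properness hypothesis in 1) is used to ensure the derivation is genuinely defined on enough of $L(\Gamma)$ (the cocycle being into a mixing, hence in particular weakly mixing, representation takes care of the ``mixing'' half), while unboundedness provides the rigidity.

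The second half of the argument handles the quasi-cocycle case, where one cannot directly exponentiate $b$ to a one-parameter group of automorphisms. Here the plan is to invoke exactness and torsion-freeness to replace the missing malleable deformation by the bounded-cohomology / array machinery of \cite{CS11,CSU13,CKP15}: a quasi-cocycle into a weakly-$\ell^2$ mixing representation still yields a closable, real, antisymmetric array on $L(\Gamma)$ with the right ``weak compactness defect'', and the array-theoretic dichotomy theorems from that circle of ideas give the same conclusion — a diffuse subalgebra with bounded array is, up to unitary conjugacy, inside a subgroup von Neumann algebra, and in the invariant setting the subgroup must be normal. I expect the main obstacle to be precisely this second case: verifying that the quasi-cocycle produces an array satisfying all the technical hypotheses of the relevant dichotomy (properness of the ``coarse part'', control of the symmetric part, and the interplay with weak $\ell^2$-mixing rather than plain mixing), and then upgrading the ``up to conjugacy inside a subgroup algebra'' conclusion to the exact statement $N=L(\Sigma)$ using $\Gamma$-invariance of $N$ to eliminate the unitary. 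Uniqueness of $\Sigma$ follows at the end from a standard argument: if $L(\Sigma_1)=L(\Sigma_2)$ then $\Sigma_1=\Sigma_2$ because each $\Sigma_i$ is exactly the set of group elements lying in the subalgebra, by uniqueness of the Fourier expansion.
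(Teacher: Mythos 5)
There is a genuine gap, and it sits exactly where your plan is vaguest: the reduction to the factorial case. You propose to ``decompose $N$ over its center and apply Theorem \ref{thm: invariant factors} to the factorial pieces,'' but the fibers of $N$ over $\mathcal Z(N)$ are not individually normalized by $\Gamma$ (the conjugation action moves the center around), so Theorem \ref{thm: invariant factors} does not apply to them. The entire analytic content of the theorem is precisely to rule out the possibility that $\mathcal Z(N)$ is diffuse, and your outline contains no mechanism for this. (You also have the sandwich backwards: the paper's Theorem \ref{thm: invariant factors} yields $N\subseteq L(\Sigma)\subseteq N\vee(N'\cap L(\Gamma))$, not $L(\Sigma)\subseteq N$.) In the paper, the key tool is Theorem \ref{atomiccorner}: if the Gaussian deformation $V_t$ converges uniformly on the unit ball of a $\Gamma$-invariant subalgebra $A$, then $A$ cannot be diffuse --- and the proof of that is not a ``bounded derivation lands in a subgroup algebra'' dichotomy but a direct combinatorial contradiction using the separation property $g(\mathfrak B_C\setminus K)\cap(\mathfrak B_C\setminus K)g=\emptyset$ from \cite[Theorem 3.1]{CSU13}. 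Getting uniform convergence on $(\mathcal Z(N))_1$ in the first place is where the two hypotheses diverge and where the real work lies: in case 2) one takes the minimal normal $\Sigma$ with $\mathcal Z(N)\subseteq L(\Sigma)$, uses Lemma \ref{inneramenablepiece} to push a corner of $\mathcal Z(N)$ into $L(vC_\Gamma(\langle g\rangle))$, uses torsion-freeness to see $\langle g\rangle\cong\mathbb Z$ and hence that $L(vC_\Gamma(\langle g\rangle))$ has property Gamma (forcing the quasi-cocycle to be bounded there if the group is nonamenable), and finally kills the amenable alternative with Thom's theorem on s-normal subgroups via Proposition \ref{prop:hpvcor7}. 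None of these steps appear in your plan, and the role you assign to torsion-freeness (feeding into an array dichotomy) is not the one it actually plays.

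Your reading of the non-properness hypothesis in case 1) is also off. It is not there to make a derivation densely defined; it is used to produce an \emph{infinite} set $\mathfrak B_C=\{g:\|q(g)\|\le C\}$, from which one extracts a sequence $(g_n)$ tending to infinity with uniformly bounded cocycle values. The proof then runs the ultrapower Case I / Case II dichotomy of \cite[Lemma 3.1]{CP10} on $u^\omega=(u_{g_n})_n$ against a unitary of $\mathcal Z(N)$, with mixing of $\pi$ entering concretely through estimates of the form $\lim_n\langle c_n\xi c_n^*,\eta\rangle=0$ for $\xi,\eta\perp L^2(M)$ when $\|E_M(c_n)\|_2\to 0$. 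So while your overall architecture (Theorem \ref{thm: invariant factors} plus Gaussian deformation plus mixing) points in the right general direction, the proposal as written omits the central arguments that make both cases work and would not assemble into a proof.
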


 \noindent Appealing to \cite[Lemma 3., Theorem 3.4]{T09} and \cite[Theorem 2.6]{PT11}, item 1) of Theorem \ref{a} further implies that for every icc, non-amenable groups $\Gamma$ which has positive first $L^2$-Betti number and contains an infinite amenable subgroup, $L(\Gamma)$ satisfies the ISR property; this generalizes \cite[Theorem 1.3]{AJ22} by completely removing the assumption that $\Gamma$ needs to satisfy the Atiyah conjecture and replacing the torsion free assumption with a  weaker condition. In connection to this we believe that the ISR conditions folds for all icc non-amenable groups with positive first $L^2$-Betti number but we do not have a proof in this generality. To pursue a new technique in this direction we believe it would be instrumental to tackle the torsion group constructed in \cite{Osi09}. 
\vskip 0.07in

\noindent We also point out that the conclusion of Theorem \ref{a} still holds true for all  \emph{nonamenable}, $\Gamma$-invariant subalgebras $N$ under slightly different and in some regards more general assumptions: a) any exact group $\Gamma$ that admits an unbounded quasi-cocycle into a weakly-$\ell^2$ mixing representation; b) any exact group $\Gamma$ that admits a proper array into a weakly-$\ell^2$ representation. For details we refer the reader to Theorem \ref{thm:nononameninv}.  \vskip 0.07in
\noindent We believe that the ISR property of $L(\G)$ is in fact more intimately connected with a specific algebraic structure of $\G$ that is an implicit manifestation of the aforementioned negative curvature condition. Namely we believe that $L(\G)$ satisfies the ISR property whenever $\G$ has trivial amenable radical. More precisely we conjecture the following 

\begin{conj} Let $\G$ be an icc group. If   $A\subset L(\G)$ is a diffuse abelian von Neumann subalgebra such that $\G\subset \mathcal N_{L(\G)}(A)$ then one can find an amenable normal subgroup 
$\Sigma < \G$ such that $A \subseteq L(\Sigma)$.  
     
\end{conj}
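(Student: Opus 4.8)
The plan is to first recast the conjecture as a purely group-theoretic assertion, after which the essential difficulty becomes sharply visible. Given a diffuse abelian $A\subset L(\G)$ with $\G\subset\mathcal N_{L(\G)}(A)$, write $\hat a(\g)=\tau(au_\g^*)$ and put
\[
S_A=\bigcup_{a\in A}\{\g\in\G:\hat a(\g)\neq 0\}.
\]
Since each $u_h$ normalizes $A$ and $\widehat{u_h a u_h^*}(\g)=\hat a(h^{-1}\g h)$, the set $S_A$ is invariant under conjugation, hence $\Sigma:=\langle S_A\rangle$ is a normal subgroup of $\G$; moreover every $a\in A$ is Fourier-supported on $\Sigma$, so $A\subseteq L(\Sigma)$. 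Conversely, if $A\subseteq L(\La)$ for an amenable normal $\La\lhd\G$, then $S_A\subseteq\La$, whence $\Sigma\leq\La$. Thus the conjecture is \emph{equivalent} to the assertion that the normal subgroup $\Sigma$ (equivalently the algebra $L(\Sigma)$) is amenable; equivalently, $A\subseteq L(\operatorname{AR}(\G))$, where $\operatorname{AR}(\G)$ is the amenable radical. This also makes the advertised link with the amenable radical explicit: when $\operatorname{AR}(\G)=1$ the conjecture forces $A=\mc1$, contradicting diffuseness, so it predicts that such $\G$ carry no diffuse abelian $\G$-invariant subalgebra at all.

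The ``easy'' half of the conjecture is already contained in the present paper. Whenever $L(\G)$ has the ISR property --- in particular for every $\G$ as in Theorem \ref{b} or Theorem \ref{a} --- the $\G$-invariant subalgebra $A$ equals $L(\Sigma_0)$ for some normal $\Sigma_0\lhd\G$; since $L(\Sigma_0)=A$ is abelian, $\Sigma_0$ is abelian, hence amenable, and $\Sigma=\Sigma_0$ settles the conjecture (with the stronger conclusion $A=L(\Sigma)$, $\Sigma$ abelian). So the genuine content lies exactly in the groups for which ISR remains open, and above all in those with \emph{nontrivial} amenable radical, where one cannot expect $N=L(\Sigma)$ for an arbitrary $\G$-invariant $N$ but should still be able to control the amenable ones.

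For that case everything hinges on proving that $L(\Sigma)$ is amenable --- equivalently, that the conjugation action $\Sigma\actson A$ is weakly compact in the sense of Ozawa--Popa, so that $L(\Sigma)=\mathcal N_{L(\Sigma)}(A)''$ (recall $\Sigma\leq\G$ normalizes $A$ and generates $L(\Sigma)$) is amenable relative to the amenable algebra $A$, hence amenable. The structural input one must convert into analysis is the rigid combinatorial fact that $\Sigma$ is \emph{generated} by the Fourier supports occurring in $A$ --- morally, the conjugation dynamics of $\Sigma$ on $A$ cannot be ``smaller'' than $\Sigma$ --- together with the full $\G$-invariance of $A$; a natural device is to average the conjugates $u_h a u_h^*$ (all of which lie in the abelian algebra $A$) along the pieces of $S_A$ to manufacture F\o{}lner/central-sequence data inside $L(\Sigma)$, and one may hope that a variant of Theorem \ref{thm: invariant factors} tailored to amenable invariant subalgebras would suffice. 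I expect this to be the main obstacle: $A$ need not be a MASA, let alone a Cartan subalgebra, of $L(\Sigma)$, so the usual rigidity machinery for normalizers of large abelian subalgebras gives no traction, and the deformation/rigidity and (quasi)cohomological tools used elsewhere in the paper are unavailable precisely when $\G$ has a large amenable radical or is a torsion monster (cf. the remark following Theorem \ref{a}). Turning ``the Fourier support of $A$ generates $\Sigma$'' into genuine analytic control of $L(\Sigma)$ appears to require a new idea.
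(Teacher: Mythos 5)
This statement is labeled as a conjecture in the paper and is not proved there; the authors explicitly write that establishing it in full generality ``seems difficult at this time.'' Your proposal is likewise not a proof. Your reduction is correct and cleanly done: $S_A$ is symmetric, contains $1$, and is conjugation-invariant because $\widehat{u_hau_h^*}(\g)=\hat a(h^{-1}\g h)$ and $u_hau_h^*\in A$, so $\Sigma=\langle S_A\rangle$ is normal, $A\subseteq L(\Sigma)$, and $\Sigma$ sits inside every subgroup $\La$ with $A\subseteq L(\La)$; hence the conjecture is indeed equivalent to the amenability of $\langle S_A\rangle$. Your observation that the ISR theorems of the paper settle the case of groups covered by Theorems \ref{b} and \ref{a} (where $A=L(\Sigma_0)$ abelian forces $\Sigma_0$ abelian) is also correct, though in that regime the conclusion is not new.

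The genuine gap is the entire analytic core: you never prove that $\langle S_A\rangle$ is amenable, and you say so yourself. The proposed mechanism --- that $\G$-invariance of $A$ plus ``the Fourier support of $A$ generates $\Sigma$'' should yield weak compactness of $\Sigma\actson A$ in the Ozawa--Popa sense, whence relative amenability of $\mathcal N_{L(\Sigma)}(A)''$ over $A$ and then amenability of $L(\Sigma)$ --- is only a hope, and as stated it would not go through: weak compactness of the normalizing action does not by itself give relative amenability of the normalizer algebra (one needs additional input such as weak amenability of $\G$ with constant $1$, which is unavailable for a general icc group), and no argument is offered for why the action should be weakly compact in the first place. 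Averaging conjugates $u_hau_h^*$ along pieces of $S_A$ to produce F\o lner-type data is not carried out even in outline. So what you have is a correct reformulation plus a correct treatment of the cases already covered by the paper's theorems, together with an honest acknowledgment that the open case requires a new idea; that acknowledgment is accurate, but it means the statement remains unproved.
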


\section{Preliminaries}

\subsection{Notations and terminology}

Throughout the paper, we write $K\Subset I$  to mean that $K$ is a finite subset of $I$. Given a set $I$ we will denote by $|I|$ its cardinality. If $G$ is a group and $K, L
\subset G$ are subsets we will denote by $KL=\{gh \,|\, g\in K,h\in L\}$ and
  \[\langle K,L\rangle^{2m}=KLKL\cdots KL,\]
  where there are $m$ copies of $KL$ on the right-hand side.

\subsection{Arrays and quasicocycles on groups}
The notion of arrays was introduced by the first author and T. Sinclair in \cite{CS11}, and further studied in \cite{CSU11, CSU13, CKP15} as a conceptual tool for understanding strong solidity of $\rm II_1$ factors. Indeed, \cite[Theorem A]{CS11} shows that the group von Neumann algebra of an icc, hyperbolic group is strongly solid. The notion of arrays generalizes that of a cocycle associated with an orthogonal group representation. In fact, any quasicocycle (see below for definition) provides examples of arrays. In this section, we recall briefly the notion of arrays. In the next section we will briefly recall the notion of Gaussian dilation of $L(\G)$. Our main interest in the study of arrays and Gaussian dilation lies in understanding the structure of $\G$-invariant subalgebras of $L(\G)$. In \cite{CD19} Gaussian deformations were used to understand the structure of invariant subfactors of the group von Neumann algebras associated with ``negatively curved groups".

\vskip 0.07in
\noindent Throughout this section $\G$ will denote a countable discrete group, and $\pi: \G \rightarrow \mathcal O(\mathcal H)$ will denote an orthogonal representation of $\G$ into some Hilbert space $\mathcal H$. 
The representation $\pi$ is called \textit{mixing} if $ \langle \pi_g(\xi), \eta \rangle \rightarrow 0$ as $g \rightarrow \infty$, for all $\xi, \eta \in \mathcal H$. The representation $\pi$ is called \textit{weakly}-$\ell^2$ if it is weakly contained in the left regular representation $\ell^2_{\mathbb R}\G$. 
\vskip 0.07in

\noindent Recall that a map $q: \G \rightarrow \mathcal H$ is called a \textit{quasicocycle} for $\pi$, if there exists some constant $D>0$ such that $\|q(gh)- \pi_g(q(h))-q(g)\| \leq D$ for all $g,h \in \G$. The defect of a quasicocycle, denoted by $D(q)$, is the infimum over all such $D$. A quasicocycle with defect $0$ is just a $1$-cocycle with coefficients in $\pi$. If a quasicocycle satisfies $q(g)=-\pi_g(q(g^{-1}))$ for all $g \in \G$ then we say that the quasicocycle is \textit{anti-symmetric}. As noted by Thom in \cite[Section 5]{T09}, for any quasicocycle $q$, the quasicocycle defined by $\tilde q(g)= \frac{1}{2}(q(g)- \pi_{g^{-1}}q(g))$ is anti-symmetric, and a bounded distance away from $q$. Hence, without loss of generality, we shall assume henceforth that any quasicocycle under discussion is anti-symmetric. We denote the space of all unbounded anti-symmetric quasicocycles associated with the representation $\pi$ by $\mathcal QH^{1}_{as}(\G, \pi)$. 
\vskip 0.05in
\noindent Many remarkable groups studied in geometric or representation group theory admit quasicocycles in their left regular representations. For example whenever $\Gamma$ is an acylindrically hyperbolic group \cite{Osi16} we have that $\mathcal QH^{1}_{as}(\G, \ell^2\G)\neq 0$, \cite{HO13}. These cover all non-elementary (relatively) hyperbolic groups, most mapping class groups of finite genus surfaces, all outer automorphism groups of free groups with at least two generators, all graph product groups that do not admit direct product decompositions, etc.

\vskip 0.07in

\noindent A map $q: \G \rightarrow \mathcal H$ is called an \textit{array} into $\mathcal H$ if it satisfies the following bounded equivariance condition:
$$\underset{h \in \G}\sup \|q(gh k)-\pi_g(q(h))\| = C(g,k)< \infty \text{ for all } g,k \in \G.$$
Clearly, quasicocycles are examples of arrays. 
\vskip 0.07in
\noindent Finally, an array (or quaiscocycle) $q$ is called \emph{proper} in for every $C>0$ the ball $\mathfrak B_C=\{ g\in \G \,:\,\|q(g)\|\leq C\}$ is finite. 
\subsection{Gaussian von Neumann algebras and their deformations.} \label{sec:prelgauss}
As in the previous section, $\G$ denotes a countable, discrete group, and $\pi: \G \rightarrow \mathcal O(\mathcal H)$  denotes an orthogonal representation. Following the treatment in \cite{PS12}, the orthogonal representation $\pi$, via the Gaussian construction, gives rise to a nonatomic standard probability measure space $(X, \mu)$ such that $L^{\infty}(X, \mu)$ is generated by a family of unitaries $\{\omega(\xi): \xi \in \mathcal H \}$ (Gaussian random variables) satisfying the following relations:\\
\textbf{a)} $\omega(0)=1$, $\omega(\xi_1 + \xi_2)=\omega(\xi_1)\omega(\xi_2)$, $\omega(\xi)^*= \omega(-\xi)$ for all $\xi, \xi_1, \xi_2 \in \mathcal H$.\\
\textbf{b)} $\tau(\omega(\xi))= e^{-\|\xi \|^2}$, where $\tau$ denotes the trace on $L^{\infty}(X, \mu)$ obtained by integration.\\
Furthermore, there exists a probability measure preserving action of $\G \overset{\hat{\pi}}\ca (X, \mu)$ such that the induced action $\G \overset{\hat{\pi}}\ca L^{\infty}(X, \mu)$ also satisfies $\hat{\pi}_g(\omega(\xi))= \omega(\pi_g(\xi))$ for all $g \in \G$, and $\xi \in \mathcal H$.
The action $\G \overset{\hat{\pi}}\ca (X, \mu)$ is called the \textit{Gaussian action} associated with $\pi$. \\
Denote by $M=L(\G)$ the group von Neumann algebra of $\G$. The \textit{Gaussian dilation} of $M$ is defined as the crossed product von Neumann algebra $\tilde M= L^{\infty}(X, \mu) \rtimes \G$. We also denote by $e_M$ the orthogonal projection from $L^2(\tilde M)$ onto $L^2(M)$. \\
Let $q: \G \rightarrow \mathcal H$ be an array associated with $\pi$. As in \cite{Si10,CS11,CSU11, CKP15}, we construct a deformation arising from $q$ via exponentiation as follows. For each $t \in \mathbb R$, let $V_t \in \mathcal U(L^2(X,\mu) \otimes \ell^2(\G))$ be defined by 
$$V_t(\xi \otimes \delta_g)= \omega(tq(g))\xi \otimes \delta_g \text{ for all } \xi \in L^2(X,\mu), \text{ and } g \in \G.$$
This procedure is referred to as the \textit{Gaussian deformation} associated with $q$.\\
For future use, we record some properties of the Gaussian deformation. The reader may consult \cite{CS11, CKP15} for the proofs.
\begin{thm}\label{propgaussdef}
	Let $\G$, $q$ and $\pi$ be as above, and let $V_t$ denote the Gaussian deformations constructed above. Then the following holds:\begin{enumerate}
		\item[a)](Transversality) $V_t$ is a strongly continuous one-parameter group of unitaries satisfying the following transversality property: For each $t \in \mathbb R$, and each $\eta \in L^2(M)$, we have 
		$$ \| e_M^{\perp} \circ V_t(\eta) \|_2^2 \leq \| \eta- V_t(\eta)\|_2^2 \leq 2 \|e_M^{\perp} \circ V_t (\eta) \|_2^2. $$
		
		\item[b)](Asymptotic bimodularity) For each $x,y \in C^*_r(\G)$ we have 
		$$\lim\limits_{t \rightarrow 0} \bigg ( \underset{\eta \in L^2(M)_1} \sup \|x V_t(\eta)y -V_t(x \eta  y)\|_2 \bigg) =0.$$
		
		\item[c)] (\cite[Proposition 6.4]{CKP15})  Let $F \subset M$ be a finite subset. Denote by $\text{cu}(F)= \{\sum_i \mu_i x_i: x_i \in F, \mu_i \in \mathbb C, |\mu_i| \leq 1 \}$. Let $X \subset (M)_1$ be a set such that $e_M^{\perp} \circ V_t \rightarrow 0$ uniformly on $X$. Then $e_M^{\perp} \circ V_t \rightarrow 0$ uniformly on $\text{cu}(F) \cdot X \cdot \text{cu}(F).$
	\item[d)] (Spectral gap argument \cite{Po06,CS11}) if $N\subseteq L(\G)$ has no amenable direct, $\pi$ is weakly -$\ell^2$, and $\G$ is exact  then $e_M^\perp\circ V_t\ra 0$ uniformly on $(N'\cap L(\G))_1$ .
	\item[e)] (\cite{CS11}) For $C>0$ denote by $\mathfrak{B}_C=\{g \in \G: \|q(g)\|\leq C \}$. Let $P_{\mathfrak{B}_C}$ denote the orthogonal projection onto the Hilbert subspace spanned by $\mathfrak{B}_C$ inside $\ell^2(\G)$. Let $A \subseteq L(\G)$ be a von Neumann subalgebra, such that $V_t \rightarrow Id$ uniformly on $(A)_1$. Then for any $\varepsilon>0$ we can find $C>0$ such that $\|a-P_{\mathfrak{B}_C}(a)\|_2 < \varepsilon$ for all $a \in (A)_1$.
	\end{enumerate}
\end{thm}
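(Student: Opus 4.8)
The plan is to derive items (a), (b), (e) by explicit computation inside the Gaussian algebra, and to obtain (c), (d) by invoking \cite{CKP15,CS11,Po06} while indicating the mechanism. I would first record two facts used throughout: using $\tau(\omega(\xi))=e^{-\|\xi\|^2}$ and $\omega(\xi_1+\xi_2)=\omega(\xi_1)\omega(\xi_2)$ one gets $\|\omega(\xi)-\omega(\zeta)\|_2^2=2-2e^{-\|\xi-\zeta\|^2}$ for $\xi,\zeta\in\mathcal H$; and the covariance relation reads $\hat{\pi}_g(\omega(\zeta))=\omega(\pi_g(\zeta))$. I also identify $L^2(\tilde M)$ with $\bigoplus_{g\in\G}L^2(X,\mu)\otimes\delta_g$, inside which $L^2(M)$ sits as $\bigoplus_{g\in\G}\mathbb C\,(1\otimes\delta_g)$; then $V_t$ is block-diagonal in $g$, acting on the $g$-th summand by multiplication by $\omega(tq(g))$, $e_M$ is the corresponding diagonal projection, and $e_M(V_tu_g)=e^{-t^2\|q(g)\|^2}u_g$.

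For (a): the one-parameter group law $V_sV_t=V_{s+t}$ follows from $\omega(sq(g))\omega(tq(g))=\omega((s+t)q(g))$; strong continuity follows from $\|V_t\xi-\xi\|_2\to0$ on the total set $\{\omega(\eta)u_g:\eta\in\mathcal H,\,g\in\G\}$ (there it equals $2-2e^{-t^2\|q(g)\|^2}$), together with $\|V_t\|=1$ and the group law; and transversality, being diagonal in $g$, reduces for each $g$ to the scalar inequalities $1-a^2\le 2-2a\le 2(1-a^2)$ with $a=e^{-t^2\|q(g)\|^2}\in(0,1]$, i.e.\ to $(1-a)^2\ge0$ and $2a(1-a)\ge0$. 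For (b): using subadditivity of the quantity in $x$ and in $y$, the bound $\|V_t\|=1$, the module inequality $\|z\xi\|_2\le\|z\|\,\|\xi\|_2$, and density of $\mathrm{span}\{u_g\}$ in $C^*_r(\G)$, it suffices to take $x=u_g$, $y=u_h$; then for $\eta=\sum_kc_ku_k\in L^2(M)_1$ the covariance relation gives $u_gV_t(\eta)u_h-V_t(u_g\eta u_h)=\sum_kc_k\big(\omega(t\pi_g(q(k)))-\omega(tq(gkh))\big)\otimes\delta_{gkh}$, an orthogonal sum, whose squared $2$-norm equals $\sum_k|c_k|^2\big(2-2e^{-t^2\|\pi_g(q(k))-q(gkh)\|^2}\big)$, which is $\le\big(2-2e^{-t^2C(g,h)^2}\big)\sum_k|c_k|^2\le 2-2e^{-t^2C(g,h)^2}$ by the bounded-equivariance (array) bound $\|q(gkh)-\pi_g(q(k))\|\le C(g,h)$; the right side tends to $0$ with $t$, uniformly in $\eta$. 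For (e): the same orthogonality gives, for $a=\sum_gc_gu_g\in(A)_1$, $\|a-V_t(a)\|_2^2=\sum_g|c_g|^2\big(2-2e^{-t^2\|q(g)\|^2}\big)\ge\big(2-2e^{-t^2C^2}\big)\,\|a-P_{\mathfrak{B}_C}(a)\|_2^2$; so given $\varepsilon>0$ I first pick $t$ with $\sup_{a\in(A)_1}\|a-V_t(a)\|_2<\varepsilon$ (using that $V_t\to\mathrm{Id}$ uniformly on $(A)_1$) and then $C$ with $2-2e^{-t^2C^2}\ge1$, which yields $\|a-P_{\mathfrak{B}_C}(a)\|_2<\varepsilon$ for all $a\in(A)_1$.

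Item (c) is \cite[Proposition 6.4]{CKP15}: for $x,y\in C^*_r(\G)$ it already follows from (b), writing $e_M^\perp V_t(xzy)=\big(V_t(xzy)-xV_t(z)y\big)+e_M^\perp\big(x\,e_M^\perp(V_tz)\,y\big)$ and using that $x\,e_M(V_tz)\,y\in L^2(M)$ for $x,y\in M$ together with $\|e_M^\perp(x\,e_M^\perp(V_tz)\,y)\|_2\le\|x\|\,\|y\|\,\|e_M^\perp V_t(z)\|_2$; the passage from $C^*_r(\G)$ to an arbitrary finite $F\subset M$ is the perturbation/Kaplansky-density argument carried out there. Item (d) is Popa's spectral gap argument \cite{Po06}: since $V_t$ only deforms along the Gaussian directions, the relevant $M$-$M$ bimodule is $\mathcal K=L^2(\tilde M)\ominus L^2(M)$, and via the symmetric Fock decomposition of $L^2(X,\mu)$ together with the hypotheses that $\pi$ is weakly-$\ell^2$ and $\G$ is exact one checks that $\mathcal K$ is weakly contained in the coarse bimodule $L^2(M)\bar\otimes L^2(M)$; if $e_M^\perp\circ V_t$ did not tend to $0$ uniformly on $(N'\cap L(\G))_1$, then picking $t_n\to0$ and $x_n\in(N'\cap L(\G))_1$ with $\|e_M^\perp V_{t_n}(x_n)\|_2$ bounded below, the unit vectors $\xi_n=e_M^\perp V_{t_n}(x_n)/\|e_M^\perp V_{t_n}(x_n)\|_2\in\mathcal K$ would become almost central for $N$ (using (b), $V_{t_n}\to\mathrm{Id}$ strongly, $x_n\in N'\cap L(\G)$, and item (c) to pass from $C^*_r(\G)$ to all of $N$), which \cite{Po06} (see also \cite[Section 4]{CS11}) forbids once $N$ has no amenable direct summand --- a contradiction. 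I expect the main obstacle to be precisely this last item: extracting the coarse weak containment of $\mathcal K$ from the ``weakly-$\ell^2$ and exact'' hypothesis, and running the spectral-gap argument cleanly in the array/quasicocycle framework; for the full details I would simply refer to \cite{CS11,CKP15}.
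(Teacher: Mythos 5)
The paper offers no proof of this theorem at all --- it simply records these facts and refers the reader to \cite{CS11,CKP15} --- so there is nothing for your argument to diverge from, and your explicit verifications of (a), (b) and (e) (the decomposition of $L^2(\tilde M)$ into the orthogonal summands $L^2(X,\mu)\otimes\delta_g$ on which $V_t$ acts diagonally, the identity $\|\omega(\xi)-\omega(\zeta)\|_2^2=2-2e^{-\|\xi-\zeta\|^2}$, the termwise scalar inequalities $1-a^2\leq 2-2a\leq 2(1-a^2)$, and the use of the bounded-equivariance constant $C(g,h)$ for bimodularity) are correct and are exactly the computations carried out in those references. Your sketches of (c) and (d) likewise track \cite[Proposition 6.4]{CKP15} and the spectral gap argument of \cite{Po06,CS11} faithfully, and deferring the weak containment and exactness details of (d) to the literature is precisely what the paper itself does.
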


\noindent For further use we also record the following technical result.

\begin{thm}\label{atomiccorner} Let $A\subseteq L(\Gamma )$ be a von Neumann subalgebra normalized by $\Gamma$ and assume that $V_t \rightarrow Id$ uniformly on $(A)_1$. Then $A$ is not diffuse.  
    
\end{thm}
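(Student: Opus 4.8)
The plan is to argue by contradiction: suppose $A$ is diffuse, and show that the uniform convergence $V_t\to \mathrm{Id}$ on $(A)_1$ forces the array $q$ to be bounded on a "large" subset of $\Gamma$ that, after using that $A$ is normalized by $\Gamma$, must in fact exhaust $\Gamma$, contradicting the hypothesis that $q$ is unbounded (in the applications $q$ is a proper array or an unbounded quasicocycle, so it is certainly not bounded on all of $\Gamma$). The first step is to invoke Theorem \ref{propgaussdef}(e): since $V_t\to\mathrm{Id}$ uniformly on $(A)_1$, for every $\varepsilon>0$ there is $C=C(\varepsilon)>0$ with $\|a-P_{\mathfrak B_C}(a)\|_2<\varepsilon$ for all $a\in (A)_1$. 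In other words, the unit ball of $A$ is almost supported, in the $\|\cdot\|_2$ sense, on the Hilbert subspace $\ell^2(\mathfrak B_C)$ spanned by the finite-energy set $\mathfrak B_C=\{g:\|q(g)\|\le C\}$.

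The second and main step is to upgrade this "almost support on $\ell^2(\mathfrak B_C)$" to an actual containment up to a small corner, and to exploit the diffuseness of $A$. Here I would use the normalization of $A$ by $\Gamma$: for $g\in\Gamma$, $u_g A u_g^*=A$, and $P_{\mathfrak B_C}$ transforms in a controlled way under left/right translation by group elements, since $u_g P_{\mathfrak B_C} u_g^* $ is (close to) the projection onto $\ell^2(g\mathfrak B_C)$ and similarly on the right. Combined with the array inequality $\|q(ghk)-\pi_g(q(h))\|\le C(g,k)$, one sees that if $A$ is almost supported on $\ell^2(\mathfrak B_C)$ then it is almost supported on $\ell^2(\mathfrak B_{C'})$ for $\mathfrak B_{C'}\supseteq g\mathfrak B_C k$ with a controlled enlargement of the constant — but the key point is that the bound $C(\varepsilon)$ from step one does not depend on which unit ball element we pick, so by iterating over a generating set one concludes the estimate survives under the whole group action. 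If $\mathfrak B_C$ were always finite (the proper case), a diffuse $A$ almost supported on the finite-dimensional $\ell^2(\mathfrak B_C)$ is the immediate contradiction: a diffuse von Neumann algebra has a Haar unitary $a$ with $a^n\to 0$ weakly, and $\langle a^n, \mathbb 1\rangle$ etc. cannot stay $\varepsilon$-close to a fixed finite-dimensional space. For general (possibly non-proper) arrays $\mathfrak B_C$ need not be finite, so instead I would run the following cleaner argument: pick a sequence of unitaries $a_n\in A$ with $a_n\to 0$ weakly (possible since $A$ is diffuse), and a single $C=C(1/2)$; then $\|P_{\mathfrak B_C}(a_n)\|_2^2 \ge \|a_n\|_2^2-\tfrac12 = \tfrac12$ for all $n$, yet I want to derive that the $P_{\mathfrak B_C}(a_n)$ cannot all have mass bounded below — this needs $\mathfrak B_C$ to be, if not finite, then at least "small" relative to the translation structure.

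The cleanest route, and the one I expect the authors take, is: a diffuse $A$ normalized by $\Gamma$ on which $V_t\to\mathrm{Id}$ uniformly must by Theorem \ref{propgaussdef}(e) and an $L^2$-averaging/maximality argument produce a $\Gamma$-invariant "support set" which is a union of finitely many cosets of $\mathfrak B_C$, hence if it is all of $\Gamma$ then $\Gamma=\mathfrak B_C\cdot F$ for a finite set $F$, forcing $q$ to be bounded on $\Gamma$ — contradicting unboundedness of the array. The main obstacle is exactly this last step: turning the soft "almost supported" statement, which a priori degrades as $\varepsilon\to 0$ with no control linking the $C(\varepsilon)$ across $\varepsilon$, into a genuine finite-covering statement. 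I would handle it by fixing one small $\varepsilon$ (say $\varepsilon = 1/4$), using that $A$ is a von Neumann algebra (so closed under the conditional-expectation-type averaging coming from its own unitary group) to promote the $\ell^2$-approximation to a statement that $P_{\mathfrak B_C}$ restricted to $A$ is "large", then invoking diffuseness to extract a sequence of Haar unitaries witnessing infinite-dimensionality transverse to any single $\ell^2(\mathfrak B_C)$ — unless $\Gamma\setminus \mathfrak B_C$ meets only finitely many of the relevant translates, which is precisely the finite-covering conclusion. Either way the contradiction is with the unboundedness of $q$.
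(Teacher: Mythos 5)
Your first step coincides with the paper's: Theorem \ref{propgaussdef}(e) gives, for each $\varepsilon>0$, a constant $C>0$ with $\|a-P_{\mathfrak{B}_C}(a)\|_2<\varepsilon$ for all $a\in (A)_1$, and your disposal of the proper case (finite $\mathfrak{B}_C$) is fine. But for a general unbounded quasicocycle your argument does not close, and the route you propose for closing it is not the right one. The ``finite-covering'' conclusion $\Gamma=\mathfrak{B}_C\cdot F$ (hence $q$ bounded on $\Gamma$) neither follows from the almost-support statement nor is it what one should aim at: the unit ball of a diffuse $A$ being $\varepsilon$-supported on $\ell^2(\mathfrak{B}_C)$ is perfectly compatible with $q$ being unbounded on $\Gamma$ (think of $A$ sitting inside the span of an infinite subset on which $q$ is bounded), so no contradiction with unboundedness of $q$ on all of $\Gamma$ can be extracted this way. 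Your worry about the constants $C(\varepsilon)$ degrading as $\varepsilon\to 0$ is a red herring --- one fixes a single $\varepsilon<1/4$ --- but the genuine missing ingredient is a combinatorial separation property of the sub-level sets of a quasicocycle, which your sketch never identifies.

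That ingredient is \cite[Theorem 3.1]{CSU13}: if $D$ is the defect of the (anti-symmetric) quasicocycle $q$ and $g\notin\mathfrak{B}_{2C+2D}$ (such $g$ exists precisely because $q$ is unbounded --- this is the \emph{only} use of unboundedness), then there is a finite $K\subset\mathfrak{B}_C$ with $g(\mathfrak{B}_C\setminus K)\cap(\mathfrak{B}_C\setminus K)g=\emptyset$, i.e.\ the left and the right translate of the cofinite part of the ball by this single element are disjoint. Diffuseness of $A$ is then used only to produce one unitary $a\in\mathcal U(A)$ with $\|P_K(a)\|_2<\varepsilon$ and $\|P_K(u_gau_g^*)\|_2<\varepsilon$ (powers of a Haar unitary suffice, since $K$ is finite), so that both $a$ and $u_gau_g^*$ are $2\varepsilon$-close to elements supported on $\mathfrak{B}_C\setminus K$; the disjointness then annihilates the main term in $1=\|u_g\|_2^2=|\langle u_gau_g^*\,u_g,\;u_g\,a\rangle|\leq 4\varepsilon$, because $u_gau_g^*u_g$ is essentially supported on $(\mathfrak{B}_C\setminus K)g$ while $u_ga$ is essentially supported on $g(\mathfrak{B}_C\setminus K)$. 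Without this separation lemma, or some substitute that plays the normalization by a group element far from $\mathfrak{B}_C$ against the quasicocycle inequality, your proposal has no mechanism for producing a contradiction, so as written it has a genuine gap rather than being an alternative proof.
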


\begin{proof} Assume by contradiction that $A$ is diffuse. 

\noindent Also, let $D$ be the defect of the quasicocycle $q$. Fix $\varepsilon >0$. Since $V_t \rightarrow Id$ uniformly on $(A)_1$ from part e) in the prior result one can find $C>0$ such that \begin{equation}\label{eq:closeprojectionofelements'}\| a-P_{\mathfrak B_c}(a)\|_2<\varepsilon\text{ for all }a\in (A)_1.\end{equation}
 As $q$ is unbounded, there exists $g \in \G \setminus \mathfrak{B}_{2C + 2D}$. By \cite[Theorem 3.1]{CSU13}, there exists $K \subset \mathfrak{B}_C$, a finite subset, such that 
\begin{equation} \label{eq:csu13thm3.1}
g(\mathfrak{B}_C \setminus K) \cap (\mathfrak{B}_C \setminus K)g = \emptyset.
\end{equation}

\noindent As $A$ is diffuse, there is $a \in \mathcal U(A)$ such that $\|\mathcal P_{K}(a)\|_2 < \varepsilon$, and $\|\mathcal P_{K}(u_gau_g^*)\|_2 < \varepsilon$. Combining these with inequality~\eqref{eq:closeprojectionofelements'} we get

\begin{equation}\label{eq:closeprojunitaryconj2}\begin{split}\|a- \mathcal P_{\mathfrak{B}_C \setminus K}(a) \|_2 &= \|a- \mathcal P_{\mathfrak{B}_C} \circ \mathcal P_{\G \setminus K}(a) \|_2 
=\|a- \mathcal P_{\mathfrak{B}_C} (a- \mathcal P_K(a)) \|_2 \\ & \leq \|a- \mathcal P_{\mathfrak{B}_C} (a) \|_2 + \|\mathcal P_{\mathfrak{B}_C} \circ \mathcal P_K (a) \leq \varepsilon + \| \mathcal P_K(a)\|_2 \leq 2 \varepsilon.\end{split}\end{equation}

\noindent Similarly, we have
\begin{equation} \label{eq:closeprojunitaryconj}
\| u_gau_g^* - \mathcal P_{\mathfrak{B}_C \setminus K}(u_g a u_g^*) \| \leq 2 \varepsilon.
\end{equation}
Then basic estimates together with inequalities~\eqref{eq:closeprojunitaryconj}-\eqref{eq:closeprojunitaryconj2}  and equation~\eqref{eq:csu13thm3.1} show that
\begin{align*}
1&= \|u_g\|_2^2 = |\langle u_g , u_g \rangle | = |\langle u_g au_g^* u_g, u_g a \rangle|\\&
 \leq  |\langle \mathcal P_{\mathfrak{B}_C \setminus K}(u_gau_g^*)u_g, u_g a \rangle| + \|\mathcal P_{\mathfrak{B}_C \setminus K}(u_gau_g^*)-u_gau_g^* \|_2 \\
 & \leq |\langle \mathcal P_{\mathfrak{B}_C \setminus K}(u_gau_g^*)u_g, u_g \mathcal P_{\mathfrak{B}_C \setminus K}(a) \rangle| + \|\mathcal P_{\mathfrak{B}_C \setminus K}(u_gau_g^*)-u_gau_g^* \|_2 + \\&\quad + \|\mathcal P_{\mathfrak{B}_C \setminus K}(u_gau_g^*) \|_2 \| \mathcal P_{\mathfrak{B}_C \setminus K}(a)-a\|_2 \\&\leq 0 + 4 \varepsilon,
\end{align*}
which is a contradiction for $\varepsilon < \frac{1}{4}$. \end{proof}

\subsection{Acylindrically hyperbolic groups}

\noindent The notion of an acylindrically hyperbolic group, introduced by Osin \cite{Osi16} as a generalization of non-elementary hyperbolic and relatively hyperbolic groups, is defined using the notion of an acylindrical action, which was introduced in \cite{Sel92} for actions on trees, and in \cite{Bow08} for actions on general metric spaces.

\begin{defn}
    Let $\Gamma$ be a group acting on a metric space $(S,d)$ by isometries. We say the action is \textit{acylindrical} if for every $D>0$, there exists $R,N>0$ such that if two points $x,y\in S$ satisfy $d(x,y)>R$, then
    \[|\{g\in G\mid d(x,gx)<D \text{ and } d(y,gy)<D\}|<N.\]
\end{defn}

\begin{defn}
    A group $\Gamma$ is \textit{acylindrically hyperbolic} if it admits a non-elementary acylindrical action on a Gromov hyperbolic space $S$.
\end{defn}
\vskip 0.07in  
\noindent In the above, $S$ is \textit{Gromov hyperbolic} if it is a geodesic metric space (i.e., every two points can be connected by a geodesic) and there exists a constant $\delta\geqslant 0$ such that whenever we have a triangle in $S$ with geodesic sides $a,b,c$, the side $c$ is contained in the $\delta$-neighborhood of $a\cup b$ and similarly for $a$ and $b$; the action of $\Gamma$ on $S$ is \textit{non-elementary} if the limit set of $\Gamma$ on the Gromov boundary of $S$ has at least $3$ points, which is equivalent to that $\Gamma$ has unbounded orbits and is not virtually cyclic \cite[Theorem 1.1]{Osi16}.
\vskip 0.05in 
\noindent Acylindrically hyperbolic groups can be equivalently defined using the notion of a hyperbolically embedded subgroup, which we recall below.

\vskip 0.07in 
\noindent Throughout this paper, we think of graphs as metric spaces. Given a graph $S$, we think of every edge of $S$ to have length $1$ and the distance between two points $x,y\in S$ is the length of a shortest path between $x,y$.

\vskip 0.07in 
\noindent Now, let $\Gamma$ be a group with a subgroup $C$. Fix a subset $X\subseteq \Gamma$ such that $X\cup C$ generates $\Gamma$ as a group. Consider the Cayley graph $\mathrm{Cay}(\Gamma,X\sqcup C)$. Here, for technical purposes we use disjoint union instead of union. We emphasize that we do allow non-trivial intersection between $X$ and $C$, and when this happens we will have multiple edges between certain pairs of vertices in $\mathrm{Cay}(\Gamma,X\sqcup C)$. Note also that the Cayley graph $\mathrm{Cay}(C,C)$ can be identified with a subgraph of $\mathrm{Cay}(\Gamma,X\sqcup C)$, i.e., the subgraph whose vertices and edges are all labeled by $C$.
\vskip 0.07in 
\noindent An edge path $p\subseteq \mathrm{Cay}(\Gamma,X\sqcup C)$ is called \textit{$C$-admissible} if $p$ does not contain edges of $\mathrm{Cay}(C,C)$. Note that we allow a $C$-admissible path $p$ to contain vertices of $\mathrm{Cay}(C,C)$. Using admissible paths we can define a \textit{relative metric} $\widehat d_{C}$ on $C$: for every pair of elements $c_1,c_2\in C$, $\widehat d_{C}(c_1,c_2)$ equals the length of a shortest $C$-admissible path between the vertices of $\mathrm{Cay}(\Gamma,X\sqcup C)$ labeled by $c_1,c_2$, if such a path exists; or $\infty$ otherwise. The arithmetic laws of $[0,\infty)$ extend naturally to $[0,\infty]$ and it is easy to verify that $\widehat d_{C}\colon C\times C\rightarrow [0,\infty]$ defines a metric on $C$. If $p$ is an edge path of $\mathrm{Cay}(\Gamma,X\sqcup C)$ whose edges are labeled by elements of $C$, then we let $\widehat{\ell}_{C}(p)=\widehat d_{C}(1,c^{-1}_1c_2)$, where $c_1$ (resp. $c_2$) is the labeled of the initial (resp. terminal) vertex of $p$.

\begin{defn}\label{def. hyperbolically embed}
	We say that a subgroup $C$ \textit{hyperbolically embeds} into a group $\Gamma$ with respect to a set $X\subseteq \Gamma$, denoted as $C \hookrightarrow_h (\Gamma,X)$, if the following hold.
	\begin{enumerate}[label=(\roman*)]
		\item\label{item. he0} $X\cup C$ generates $\Gamma$ as a group.
		\item\label{item. he1} $\mathrm{Cay}(\Gamma,X\sqcup C)$ is Gromov hyperbolic.
		\item\label{item. he2} The metric $\widehat d_C$ is locally finite, i.e., every ball of finite radius contains only finitely many elements.
	\end{enumerate}
	We say that $C$ \textit{hyperbolically embeds} into $\Gamma$ if there exists a set $X\subseteq \Gamma$ such that \ref{item. he0}, \ref{item. he1} and \ref{item. he2} hold.
\end{defn}

The following is the main result of \cite{Osi16}.
\begin{thm}[Osin, 2016]
A group $\Gamma$ is acylindrically hyperbolic if and only if $\Gamma$ has a proper infinite hyperbolically embedded subgroup.
\end{thm}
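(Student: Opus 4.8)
**Plan for proving Theorem (Osin, 2016): equivalence of acylindrical hyperbolicity and existence of a proper infinite hyperbolically embedded subgroup.**

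The statement to prove is a biconditional, so I would split it into the two implications and treat them separately. For the direction "$\Gamma$ has a proper infinite hyperbolically embedded subgroup $\Rightarrow$ $\Gamma$ is acylindrically hyperbolic": given $C\hookrightarrow_h(\Gamma,X)$ with $C$ proper and infinite, the plan is to exhibit an explicit non-elementary acylindrical action on a Gromov hyperbolic space. The natural candidate is the action of $\Gamma$ by left translations on the Cayley graph $\mathrm{Cay}(\Gamma,X\sqcup C)$, which is Gromov hyperbolic by hypothesis \ref{item. he1}. One must then verify: (a) the action is non-elementary, i.e. $\Gamma$ has unbounded orbits and is not virtually cyclic — unboundedness follows because $C$ is infinite while $\widehat d_C$ is locally finite (condition \ref{item. he2}), forcing the $C$-edges to stretch arbitrarily far in the ambient metric, and non-virtual-cyclicity follows since a proper infinite hyperbolically embedded subgroup cannot coexist with $\Gamma$ being virtually cyclic; and (b) acylindricity, which is the genuinely technical point. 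Here I would invoke the coarse geometry of hyperbolically embedded subgroups from \cite{DGO17}: the key input is that the family $\{gCg^{-1}\}$ is ``geometrically separated'' / satisfies a bounded-coset-penetration-type property, and from this one deduces that long, thin ``rectangles'' of $\Gamma$-translates of a basepoint can only be stabilized coarsely by boundedly many elements. This is essentially the content of the relevant sections of \cite{DGO17} (and \cite{Osi07}), and I would cite it rather than reprove it.

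For the converse, "$\Gamma$ acylindrically hyperbolic $\Rightarrow$ $\Gamma$ has a proper infinite hyperbolically embedded subgroup": start from a non-elementary acylindrical action of $\Gamma$ on a Gromov hyperbolic space $S$. Since the action is non-elementary, $\Gamma$ contains at least one loxodromic element $g$ (acylindricity rules out the parabolic/quasi-parabolic cases and forces the existence of loxodromics whenever orbits are unbounded and $\Gamma$ is not virtually cyclic — this is again a standard consequence of acylindricity on hyperbolic spaces). The maximal virtually cyclic subgroup $E(g)$ containing $g$ — its ``elementary closure'' — is then the candidate hyperbolically embedded subgroup; one shows $E(g)\hookrightarrow_h(\Gamma,X)$ for a suitable generating set $X$ by using a Milnor–Švarc-type argument together with the fact that distinct $\Gamma$-translates of the quasi-axis of $g$ fellow-travel only on bounded segments (a direct consequence of acylindricity). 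The relative metric $\widehat d_{E(g)}$ is then locally finite because moving far along the axis of $g$ requires traversing many $X$-edges. This is the hard, technical half; I would structure it around the ``$WPD$ element $\Rightarrow$ hyperbolically embedded elementary closure'' principle, citing \cite{DGO17} for the detailed estimates rather than carrying them out.

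The step I expect to be the main obstacle is verifying acylindricity of the action on $\mathrm{Cay}(\Gamma,X\sqcup C)$ in the first implication (and symmetrically, the fellow-traveling/separation estimates for translates of the quasi-axis in the converse). Both amount to controlling how $\Gamma$-translates of a hyperbolically embedded subgroup (or of a loxodromic axis) interact coarsely, and both rely on the delicate geometric separation results developed in \cite{DGO17}. Since this is precisely the content for which I would cite \cite{Osi16, DGO17, Osi07} as given, the proof at the level of this paper reduces to assembling these cited facts; I would present it as a brief recollection with references rather than a self-contained argument. (Indeed, in the paper itself this theorem is attributed to Osin and would simply be quoted; a full proof is beyond what the surrounding text requires.)
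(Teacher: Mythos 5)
The paper does not prove this statement at all: it is imported verbatim as the main theorem of \cite{Osi16} and used as a black box, so there is no internal argument to compare yours against. Your reconstruction of the direction ``acylindrically hyperbolic $\Rightarrow$ proper infinite hyperbolically embedded subgroup'' (produce a loxodromic WPD element, take its elementary closure $E(g)$, and cite \cite{DGO17} for $E(g)\hookrightarrow_h\Gamma$) is the standard and correct route.

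The other direction as you sketch it, however, contains a genuine error. You propose to verify that the left-translation action of $\Gamma$ on $\mathrm{Cay}(\Gamma,X\sqcup C)$ is itself acylindrical, given only that $C\hookrightarrow_h(\Gamma,X)$. This is false in general: the definition of a hyperbolically embedded subgroup places no acylindricity-type restriction on the set $X$, and for a poorly chosen $X$ the action on $\mathrm{Cay}(\Gamma,X\sqcup C)$ need not be acylindrical even though $C$ is hyperbolically embedded with respect to $X$. The actual content of this implication in \cite{Osi16} is a theorem asserting that one can \emph{enlarge} $X$ to a relative generating set $Y\supseteq X$ such that $C$ is still hyperbolically embedded in $(\Gamma,Y)$ \emph{and} the action on $\mathrm{Cay}(\Gamma,Y\sqcup C)$ is acylindrical; constructing $Y$ and proving acylindricity there is the hard step, and it is Osin's contribution rather than something contained in \cite{DGO17} or \cite{Osi07} (those references supply the isolated-component and separation estimates, not the acylindricity of the relative Cayley graph). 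So the step you flag as ``the genuinely technical point'' is not merely technical: the space you chose is the wrong one, and the citation you lean on does not cover it. Since the paper only quotes the theorem, the safe course --- which your closing parenthetical essentially concedes --- is to cite \cite{Osi16} for the whole equivalence rather than to assert acylindricity of the unmodified relative Cayley graph.
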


\noindent Now suppose we have groups $\Gamma\geqslant C$ and a subset $X\subseteq \Gamma$ such that $C\hookrightarrow_h (\Gamma,X)$. Let $p$ be a path in $\mathrm{Cay}(\Gamma,X\sqcup C)$. A \textit{$C$-subpath} $q$ of $p$ is a subpath of $p$ all of whose edges are labeled by elements of $C$ (if $p$ is a cycle then we  allow $q$ to be a subpath of a cyclic shift of $p$). Also, $q$ is called a \textit{$C$-component} if $q$ is a $C$-subpath and $q$ is not properly contained in any other $C$-subpath. Two $C$-components $q_1,q_2$ of $p$ are \textit{connected} if there is an edge path $q_3$ all of whose edges are labeled by elements of $C$ such that $q_3$ connects a vertex of $q_1$ to a vertex $q_2$. If a $C$-component $q$ is not connected to any other $C$-components, then we say $q$ is an \textit{isolated $C$-component}. A useful property of isolated $C$-components is that, in a geodesic polygon, the total $\widehat{\ell}_{C}$-length of the isolated $C$-components is bounded uniformly in the number of sides of the polygon. The following is a simplified version of \cite[Proposition 4.14]{DGO17}, which generalizes \cite[Proposition 3.2]{Osi07}.

\begin{prop}[Dahmani--Guirardel--Osin, 2017]\label{prop. isolated component}
	There exists $D>0$ such that the following holds. Let $p$ be an $n$-gon in $\mathrm{Cay}(\Gamma,X\sqcup C)$ with geodesic sides and let $I$ be the subset of sides consisting of isolated $C$-components of $p$. Then \[\sum_{q\in I}\widehat{\ell}_{C}(q)\leq Dn.\]
\end{prop}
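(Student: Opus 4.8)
The plan is to reduce the statement to a one–component estimate and then sum, using only the two features built into the hypothesis $C\hookrightarrow_h(\Gamma,X)$: the $\delta$–hyperbolicity of $\mathrm{Cay}(\Gamma,X\sqcup C)$ and the local finiteness of the relative metric $\widehat d_{C}$. The first move is an elementary reduction. Since every side of $p$ is a geodesic of $\mathrm{Cay}(\Gamma,X\sqcup C)$ and $C$ is a subgroup, no side can contain two consecutive $C$–labelled edges (they could be amalgamated into one, contradicting geodesicity); hence every $q\in I$ is a single $C$–edge $[u_q,v_q]$ with $v_q=u_qc_q$, $c_q\in C$, and $\widehat\ell_{C}(q)=\widehat d_{C}(1,c_q)$, which I will also write $\widehat d_{C}(u_q,v_q)$ using the transport of $\widehat d_{C}$ to the coset $u_qC$. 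Moreover, if $q,q'\in I$ had $u_qC=u_{q'}C$ then $q$ and $q'$ would be connected, so the cosets $\{u_qC:q\in I\}$ are pairwise distinct.

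The geometric core is a projection estimate. From $\delta$–hyperbolicity one gets, for each coset $gC$, a coarsely well-defined closest-point projection $\pi_{gC}$, together with a ``bounded geodesic image'' phenomenon measured in $\widehat d_{C}$: there is a constant $B=B(\delta)$ such that any geodesic of $\mathrm{Cay}(\Gamma,X\sqcup C)$ disjoint from the $1$–neighbourhood of $gC$ has $\pi_{gC}$–image of $\widehat d_{C}$–diameter at most $B$, while a geodesic meeting $gC$ can cross it only in a set of $\widehat d_{C}$–diameter at most $1$ — a longer crossing would either fail to be geodesic or, being an honest $C$–edge inside $gC$, produce a $C$–component of $p$ contained in $gC$. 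Now fix $q\in I$, put $Q=u_qC$, and let $\gamma_q$ be the complementary path: the concatenation, from $v_q$ to $u_q$, of the $n-1$ sides of $p$ other than $q$. Isolatedness of $q$ means no side of $\gamma_q$ carries a $C$–edge inside $Q$, so the remarks above apply to each such side; since consecutive sides of $\gamma_q$ share an endpoint whose $\pi_{Q}$–image lies in both successive projections, $\pi_{Q}(\gamma_q)=\bigcup_{j}\pi_{Q}(p_j)$ is ``chained'', and therefore $\widehat d_{C}(u_q,v_q)\le\mathrm{diam}_{\widehat d_{C}}\pi_{Q}(\gamma_q)\le\sum_{j\neq i(q)}\mathrm{diam}_{\widehat d_{C}}\pi_{Q}(p_j)$, where $i(q)$ is the index of the side $q$.

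Summing over $q\in I$ and interchanging the order of summation gives $\sum_{q\in I}\widehat\ell_{C}(q)\le\sum_{j=1}^{n}\big(\sum_{q\in I,\,i(q)\neq j}\mathrm{diam}_{\widehat d_{C}}\pi_{u_qC}(p_j)\big)$, so it suffices to show that a single geodesic side $p_j$ has \emph{total} $\widehat d_{C}$–projection to the pairwise distinct cosets $\{u_qC\}_{q\in I}$ bounded by a universal constant $D_{0}=D_{0}(\delta)$; then one may take $D=D_{0}$. This last point is exactly the ``$n$–qons'' combinatorics for hyperbolically embedded subgroups (and the statement is, after all, a mild repackaging of \cite[Proposition 4.14]{DGO17}, generalising \cite[Proposition 3.2]{Osi07}): along one geodesic, distinct cosets that are deeply penetrated must be separated by a definite $\widehat d_{C}$–amount, so only boundedly many $u_qC$ receive a non-negligible projection from $p_j$ and their contributions telescope. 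I expect this to be the main obstacle — obtaining the aggregate-projection bound for a single geodesic while keeping every estimate insensitive to the perimeter of $p$, so that the final bound is linear in $n$ rather than in the perimeter. The only other delicate point is bookkeeping: one must ensure that the projections, and any $X$–detours used along the way, never re-enter the coset $u_qC$, which is precisely where the isolatedness of $q$ enters and must be carried through the whole argument.
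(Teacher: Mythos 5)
First, a point of comparison: the paper does not prove this proposition at all --- it is imported verbatim as a simplified form of \cite[Proposition 4.14]{DGO17}, so there is no in-paper argument for your attempt to match. Judged as a standalone proof, your proposal has a genuine gap, and in fact two related ones. The ``geometric core'' asserts that $\delta$-hyperbolicity of $\mathrm{Cay}(\Gamma,X\sqcup C)$ yields, for each coset $gC$, a coarsely well-defined projection $\pi_{gC}$ whose image on geodesics has bounded \emph{$\widehat d_C$-diameter}. This does not follow from hyperbolicity of the relative Cayley graph: every coset $gC$ has diameter $1$ in the graph metric (any two of its vertices are joined by a single $C$-edge), so nearest-point projection in that metric is vacuous, and it carries no information whatsoever about $\widehat d_C$, which is a different, locally finite metric deliberately incomparable to the induced one. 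Producing entry/exit points on $gC$ and proving that their $\widehat d_C$-separation is uniformly bounded for geodesics avoiding $gC$ is precisely the content of the component/projection machinery of \cite{DGO17} (and of \cite{Osi07} in the relatively hyperbolic case), whose proof is a combinatorial analysis of isolated components of geodesic polygons --- i.e.\ essentially the statement you are trying to prove.

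The second gap is the one you flag yourself: after interchanging the order of summation, everything rests on the claim that a single geodesic side has total $\widehat d_C$-projection onto the pairwise distinct cosets $\{u_qC\}_{q\in I}$ bounded by a universal constant $D_0(\delta)$. You justify this by appeal to ``the $n$-qons combinatorics for hyperbolically embedded subgroups'' and remark that the statement is ``a mild repackaging of \cite[Proposition 4.14]{DGO17}'' --- which makes the argument circular, since that proposition \emph{is} the statement under proof. The preliminary reductions (each isolated component is a single $C$-edge; the cosets $u_qC$ are pairwise distinct; a side of $\gamma_q$ meets $Q$ in at most two vertices of $\widehat d_C$-distance at most $1$ by isolatedness) are correct and would be useful inside a genuine proof, but the two load-bearing steps are exactly the ones left unproven. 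To close the gap you would need to either reproduce the induction on polygons from \cite{DGO17} (reducing to geodesic triangles/quadrilaterals and propagating the bound linearly in $n$), or independently establish the $\widehat d_C$-valued bounded-geodesic-image property from the definition of hyperbolic embedding --- neither of which is a formality.
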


\begin{thm}\label{8gon}
	Let $\Gamma$ be a group with a hyperbolically embedded subgroup $C$. Then for every $K\Subset \Gamma\setminus C$, there exists $L\Subset C$ such that for all $m,n\in\mathbb N^+$,
	\begin{equation}\label{eq. polygon}
	\langle (C\setminus L),K \rangle^{2m} \cap \langle K,(C\setminus L) \rangle^{2n}=\emptyset.
	\end{equation}
\end{thm}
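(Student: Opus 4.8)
The plan is to argue by contradiction: I would turn a hypothetical element of $\langle C\setminus L,K\rangle^{2m}\cap\langle K,C\setminus L\rangle^{2n}$ into a closed alternating polygon in $\mathrm{Cay}(\Gamma,X\sqcup C)$ and then invoke Proposition \ref{prop. isolated component} twice, once for the whole polygon and once for a ``bigon'' extracted from it. First I would make two harmless reductions: replacing $K$ by $K\cup K^{-1}$ (and shrinking $L$ afterwards) we may assume $K=K^{-1}$, and enlarging $X$ by the finite set $K$, which preserves $C\hookrightarrow_h(\Gamma,X)$ by \cite{DGO17}, we may assume $K\subseteq X$, so that every $\kappa\in K$ labels a single edge of $\mathrm{Cay}(\Gamma,X\sqcup C)$. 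Let $D$ be the constant of Proposition \ref{prop. isolated component} for this Cayley graph and set $L:=\{c\in C:\widehat d_C(1,c)\le 4D\}$; this is finite by local finiteness of $\widehat d_C$ and symmetric since $\widehat d_C(1,c^{-1})=\widehat d_C(1,c)$.

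Suppose \eqref{eq. polygon} fails for this $L$: $c_1k_1\cdots c_mk_m=k_1'c_1'\cdots k_n'c_n'$ with every $c$ outside $L$ and every $k$ in $K$. Rewriting this as a relator and putting $N:=m+n$ produces a cyclic identity $d_1\kappa_1d_2\kappa_2\cdots d_N\kappa_N=1$ with $d_i\in C\setminus L$ and $\kappa_i\in K$. Read around $\mathrm{Cay}(\Gamma,X\sqcup C)$ it is a geodesic $2N$-gon $P$ whose sides are single edges, alternately $C$-edges $e_i$ (labelled $d_i$) and $X$-edges (labelled $\kappa_i\notin C$); hence the $C$-components of $P$ are exactly $e_1,\dots,e_N$, and $N\ge2$ because $d\kappa=1$ is impossible for $d\in C$, $\kappa\notin C$. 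If every $e_i$ is isolated then Proposition \ref{prop. isolated component} bounds $\sum_{i=1}^N\widehat d_C(1,d_i)$ by $2DN$; but $d_i\notin L$ forces $\widehat d_C(1,d_i)>4D$, so $4DN<2DN$ --- a contradiction.

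The main case is that some $e_i$ is connected in $P$ to some $e_j$. A short computation with the four possible pairs of connecting vertices shows that $e_i,e_j$ are connected precisely when the intervening word $\kappa_id_{i+1}\cdots d_{j-1}\kappa_{j-1}$ represents an element of $C$; in particular cyclically adjacent $C$-edges, having a lone $\kappa\notin C$ between them, are never connected. Among all connected pairs I would pick one of minimal cyclic span $t$, necessarily $t\ge2$; after a cyclic relabelling call it $e_1,e_{1+t}$, put $\rho:=\kappa_1d_2\kappa_2\cdots d_t\kappa_t\in C$, and let $r$ be the single $C$-edge labelled $\rho$ joining the terminal vertex of $e_1$ to the initial vertex of $e_{1+t}$. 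Then $r$ together with the sub-path $\kappa_1e_2\kappa_2\cdots e_t\kappa_t$ of $P$ bounds a geodesic $2t$-gon $Q$ with $C$-components $r,e_2,\dots,e_t$, and minimality of $t$ forces each of them to be isolated in $Q$: a connection among $e_2,\dots,e_t$, or between $r$ and some $e_a$, would unwind to a connection in $P$ between two of $e_1,e_a,e_{1+t}$ of cyclic span strictly less than $t$. Proposition \ref{prop. isolated component} applied to $Q$ therefore bounds $\widehat\ell_C(r)+\sum_{a=2}^{t}\widehat d_C(1,d_a)$ by $2Dt$; since each $d_a$ with $2\le a\le t$ lies outside $L$ and so contributes more than $4D$, this gives $(t-1)\cdot4D<2Dt$, i.e.\ $t<2$, contradicting $t\ge2$.

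The conceptual core is this bigon trick: a minimal-span connection between two ``long'' $C$-syllables encloses a short geodesic polygon all of whose $C$-components are isolated, so Proposition \ref{prop. isolated component} pins the lengths $\widehat d_C(1,d_a)$ of those syllables and contradicts that they were chosen long. The points I expect to require the most care are (i) the reduction $K\subseteq X$, i.e.\ stability of hyperbolic embeddedness under enlarging $X$ by a finite set; (ii) the case analysis showing that minimality of $t$ rules out every connection among the $C$-components of $Q$ (four ``shapes'' of connection, each collapsing to a strictly smaller span in $P$); and (iii) fixing the radius of $L$: radius $4D$ is precisely what forces both inequalities $4DN<2DN$ and $(t-1)4D<2Dt$ to be strict. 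Everything else is routine bookkeeping.
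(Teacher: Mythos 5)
Your argument is correct and runs on the same engine as the paper's proof --- Proposition \ref{prop. isolated component} applied first to the full alternating geodesic polygon and then to a sub-polygon cut off by a single connecting $C$-edge --- but the implementation is genuinely different. The paper keeps the relative generating set $X$ fixed and instead chooses, for each $k\in K$, a geodesic word $w_k=u_kw_k'v_k$ whose maximal initial and terminal $C$-segments are split off; the $C$-syllables of the relation then become the elements $v_{k_{i-1}}c_iu_{k_i}$, which forces the more elaborate definition of $L$ via the condition $\widehat d_C(1,r_1cr_2)\le 4D$ for $r_1,r_2$ ranging over the prefixes and suffixes, and the argument must also handle the possibility that a long $C$-component connects to a $C$-component sitting \emph{inside} one of the sides labeled $w'_{k}$. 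Your reduction $K\subseteq X$ (legitimate by the stability of hyperbolic embeddedness under finite modification of the relative generating set, proved in \cite{DGO17}) makes every $k$-side a single non-$C$ edge, so the only $C$-components of the polygon are the syllables themselves, $L$ is simply the ball of radius $4D$ in $\widehat d_C$, and the interior-component case disappears; your explicit minimal-span selection replaces the paper's ``if some $p_i$ is not isolated, rename it $p_1$'' step and makes the isolation of all $C$-components of the sub-polygon $Q$ a clean consequence of minimality (including the new edge $r$, whose endpoints lie on $e_1$ and $e_{1+t}$, so a connection from $r$ would again produce a pair of smaller span). The trade-off is that you import an additional result from \cite{DGO17} that the paper's self-contained word decomposition avoids; the final arithmetic, $(t-1)\cdot 4D<2Dt$ versus the paper's $4D(k-2)<(2k-2)D$, is the same calibration of the radius $4D$, and both correctly yield the contradiction.
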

\begin{proof}
	Fix $X\subset \Gamma$ such that $C\hookrightarrow_h(\Gamma,X)$. For each $k\in K$, there exists a geodesic word $w_k$ over $(X\cup X^{-1})\sqcup C$ representing $k$ such that the following holds.
	\begin{itemize}
		\item Let $u_k$ (resp. $v_k$) be the maximal initial (resp. terminal) segment of $w_k$ labeled by a word over $C$. Write $w_k$ as the concatenation of $u_k,w'_k$ and $v_k$. Then no non-trivial initial or terminal segment of $w'_k$ can represent an element of $C$.
	\end{itemize}
	
	\noindent We note that $w'_k\neq \emptyset$ for all $k$ as $K\cap C=\emptyset$. Let $R=\{u_k,u^{-1}_k,v_k,v^{-1}_k\mid k\in K\}$, let $D$ be the constant given by Proposition \ref{prop. isolated component} and let $L\Subset C$ be the finite set consisting of elements $c\in C$ such that there exists $r_1,r_2\in R$ with
	\begin{equation}\label{eq. long component}
	\widehat d_C(1,r_1cr_2)\leq 4D.
	\end{equation}
	Note that $L=L^{-1}$.
	
	\begin{claim}\label{noproduct}
		There do not exist $c_1,\cdots, c_{m+n}\in C\setminus L$ and $k_1,\cdots,k_{m+n}\in K\cup K^{-1}$ such that
		\[\prod^{m+n}_{i=1} c_ik_i=1.\]
	\end{claim}
	
	\noindent \emph{Proof of Claim \ref{noproduct}.} Suppose for the contrary that such a set of $c_i,k_i$ exists. For simplicity, we will assume that $k_i\in K$ for all $1\leq i \leq m+n$. The general case can be treated in the same way.
	
	\noindent For $1\leqslant i\leqslant m+n$, let $c'_i\in C$ be the element represented by the word $v_{k_{i-1}}c_iu_{k_i}$ (subscript modulo $m+n$). Consider the path $p\subseteq \mathrm{Cay}(\Gamma,X\sqcup C)$ labeled by the word $\prod^{m+n}_{i=1} c'_iw'_{k_i}$. For $1\leqslant i\leqslant m+n$, let $p_i$ (resp. $q_i$) be the subpath of $p$ labeled by the word $c'_i$ (resp. $w'_{k_i}$). Then $p$ is a $(2m+2n)$-gon with geodesic sides $p_i,q_i$; here we used that $w'_{k_i}\neq\emptyset$.
	
	\begin{figure}
		\centering
		
		\begin{minipage}{0.45\textwidth}
			\centering
			
			\tikzset{every picture/.style={line width=0.75pt}} 
			
			\begin{tikzpicture}[x=0.75pt,y=0.75pt,yscale=-0.5,xscale=0.5]
			
			\draw [color={rgb, 255:red, 0; green, 0; blue, 0 }  ,draw opacity=0.3 ][line width=3]    (120.5,93.53) -- (218.91,36.45) ;
			\draw [shift={(223.23,33.95)}, rotate = 149.89] [color={rgb, 255:red, 0; green, 0; blue, 0 }  ,draw opacity=0.3 ][line width=3]    (20.77,-6.25) .. controls (13.2,-2.65) and (6.28,-0.57) .. (0,0) .. controls (6.28,0.57) and (13.2,2.66) .. (20.77,6.25)   ;
			\draw [line width=3]    (223.23,33.95) -- (398.92,33.03) ;
			\draw [shift={(403.92,33)}, rotate = 179.7] [color={rgb, 255:red, 0; green, 0; blue, 0 }  ][line width=3]    (20.77,-6.25) .. controls (13.2,-2.65) and (6.28,-0.57) .. (0,0) .. controls (6.28,0.57) and (13.2,2.66) .. (20.77,6.25)   ;
			\draw [line width=3]    (520.5,98.26) -- (520.5,187.85) ;
			\draw [shift={(520.5,192.85)}, rotate = 270] [color={rgb, 255:red, 0; green, 0; blue, 0 }  ][line width=3]    (20.77,-6.25) .. controls (13.2,-2.65) and (6.28,-0.57) .. (0,0) .. controls (6.28,0.57) and (13.2,2.66) .. (20.77,6.25)   ;
			\draw [line width=3]    (403.92,259.05) -- (250.09,259.97) ;
			\draw [shift={(245.09,260)}, rotate = 359.66] [color={rgb, 255:red, 0; green, 0; blue, 0 }  ][line width=3]    (20.77,-6.25) .. controls (13.2,-2.65) and (6.28,-0.57) .. (0,0) .. controls (6.28,0.57) and (13.2,2.66) .. (20.77,6.25)   ;
			\draw [color={rgb, 255:red, 0; green, 0; blue, 0 }  ,draw opacity=0.3 ][line width=3]    (520.5,192.85) -- (408.27,256.58) ;
			\draw [shift={(403.92,259.05)}, rotate = 330.41] [color={rgb, 255:red, 0; green, 0; blue, 0 }  ,draw opacity=0.3 ][line width=3]    (20.77,-6.25) .. controls (13.2,-2.65) and (6.28,-0.57) .. (0,0) .. controls (6.28,0.57) and (13.2,2.66) .. (20.77,6.25)   ;
			\draw [color={rgb, 255:red, 0; green, 0; blue, 0 }  ,draw opacity=0.3 ][line width=3]    (403.92,33) -- (516.14,95.82) ;
			\draw [shift={(520.5,98.26)}, rotate = 209.24] [color={rgb, 255:red, 0; green, 0; blue, 0 }  ,draw opacity=0.3 ][line width=3]    (20.77,-6.25) .. controls (13.2,-2.65) and (6.28,-0.57) .. (0,0) .. controls (6.28,0.57) and (13.2,2.66) .. (20.77,6.25)   ;
			\draw [line width=3]    (122.69,194.74) -- (120.61,98.53) ;
			\draw [shift={(120.5,93.53)}, rotate = 88.76] [color={rgb, 255:red, 0; green, 0; blue, 0 }  ][line width=3]    (20.77,-6.25) .. controls (13.2,-2.65) and (6.28,-0.57) .. (0,0) .. controls (6.28,0.57) and (13.2,2.66) .. (20.77,6.25)   ;
			\draw [color={rgb, 255:red, 0; green, 0; blue, 0 }  ,draw opacity=0.3 ][line width=3]    (245.09,260) -- (127.1,197.09) ;
			\draw [shift={(122.69,194.74)}, rotate = 28.07] [color={rgb, 255:red, 0; green, 0; blue, 0 }  ,draw opacity=0.3 ][line width=3]    (20.77,-6.25) .. controls (13.2,-2.65) and (6.28,-0.57) .. (0,0) .. controls (6.28,0.57) and (13.2,2.66) .. (20.77,6.25)   ;
			\draw [line width=3]  [dash pattern={on 7.88pt off 4.5pt}]  (120.5,93.53) .. controls (144.77,232.59) and (324.5,152) .. (223.23,33.95) ;
			
			\draw (85,137) node [anchor=north west][inner sep=0.75pt]    {$p_{1}$};
			\draw (145,37) node [anchor=north west][inner sep=0.75pt]    {$q_{1}$};
			\draw (296,4) node [anchor=north west][inner sep=0.75pt]    {$p_{2}$};
			\draw (458,35) node [anchor=north west][inner sep=0.75pt]    {$q_{2}$};
			\draw (531,129) node [anchor=north west][inner sep=0.75pt]    {$p_{3}$};
			\draw (473,224) node [anchor=north west][inner sep=0.75pt]    {$q_{3}$};
			\draw (324,265) node [anchor=north west][inner sep=0.75pt]    {$p_{4}$};
			\draw (166,230) node [anchor=north west][inner sep=0.75pt]    {$q_{4}$};
			\draw (201,130) node [anchor=north west][inner sep=0.75pt]    {$t$};
			
			\end{tikzpicture}
			
		\end{minipage}
		\begin{minipage}{0.45\textwidth}
			\centering

			\tikzset{every picture/.style={line width=0.75pt}} 
			
			\begin{tikzpicture}[x=0.75pt,y=0.75pt,yscale=-0.5,xscale=0.5]
			
			\draw [color={rgb, 255:red, 0; green, 0; blue, 0 }  ,draw opacity=0.3 ][line width=3]    (120.5,93.53) -- (218.91,36.45) ;
			\draw [shift={(223.23,33.95)}, rotate = 149.89] [color={rgb, 255:red, 0; green, 0; blue, 0 }  ,draw opacity=0.3 ][line width=3]    (20.77,-6.25) .. controls (13.2,-2.65) and (6.28,-0.57) .. (0,0) .. controls (6.28,0.57) and (13.2,2.66) .. (20.77,6.25)   ;
			\draw [line width=3]    (223.23,33.95) -- (398.92,33.03) ;
			\draw [shift={(403.92,33)}, rotate = 179.7] [color={rgb, 255:red, 0; green, 0; blue, 0 }  ][line width=3]    (20.77,-6.25) .. controls (13.2,-2.65) and (6.28,-0.57) .. (0,0) .. controls (6.28,0.57) and (13.2,2.66) .. (20.77,6.25)   ;
			\draw [line width=3]    (520.5,98.26) -- (520.5,187.85) ;
			\draw [shift={(520.5,192.85)}, rotate = 270] [color={rgb, 255:red, 0; green, 0; blue, 0 }  ][line width=3]    (20.77,-6.25) .. controls (13.2,-2.65) and (6.28,-0.57) .. (0,0) .. controls (6.28,0.57) and (13.2,2.66) .. (20.77,6.25)   ;
			\draw [line width=3]    (403.92,259.05) -- (250.09,259.97) ;
			\draw [shift={(245.09,260)}, rotate = 359.66] [color={rgb, 255:red, 0; green, 0; blue, 0 }  ][line width=3]    (20.77,-6.25) .. controls (13.2,-2.65) and (6.28,-0.57) .. (0,0) .. controls (6.28,0.57) and (13.2,2.66) .. (20.77,6.25)   ;
			\draw [color={rgb, 255:red, 0; green, 0; blue, 0 }  ,draw opacity=0.3 ][line width=3]    (520.5,192.85) -- (408.27,256.58) ;
			\draw [shift={(403.92,259.05)}, rotate = 330.41] [color={rgb, 255:red, 0; green, 0; blue, 0 }  ,draw opacity=0.3 ][line width=3]    (20.77,-6.25) .. controls (13.2,-2.65) and (6.28,-0.57) .. (0,0) .. controls (6.28,0.57) and (13.2,2.66) .. (20.77,6.25)   ;
			\draw [color={rgb, 255:red, 0; green, 0; blue, 0 }  ,draw opacity=0.3 ][line width=3]    (403.92,33) -- (516.14,95.82) ;
			\draw [shift={(520.5,98.26)}, rotate = 209.24] [color={rgb, 255:red, 0; green, 0; blue, 0 }  ,draw opacity=0.3 ][line width=3]    (20.77,-6.25) .. controls (13.2,-2.65) and (6.28,-0.57) .. (0,0) .. controls (6.28,0.57) and (13.2,2.66) .. (20.77,6.25)   ;
			\draw [line width=3]    (122.69,194.74) -- (120.61,98.53) ;
			\draw [shift={(120.5,93.53)}, rotate = 88.76] [color={rgb, 255:red, 0; green, 0; blue, 0 }  ][line width=3]    (20.77,-6.25) .. controls (13.2,-2.65) and (6.28,-0.57) .. (0,0) .. controls (6.28,0.57) and (13.2,2.66) .. (20.77,6.25)   ;
			\draw [color={rgb, 255:red, 0; green, 0; blue, 0 }  ,draw opacity=0.3 ][line width=3]    (245.09,260) -- (127.1,197.09) ;
			\draw [shift={(122.69,194.74)}, rotate = 28.07] [color={rgb, 255:red, 0; green, 0; blue, 0 }  ,draw opacity=0.3 ][line width=3]    (20.77,-6.25) .. controls (13.2,-2.65) and (6.28,-0.57) .. (0,0) .. controls (6.28,0.57) and (13.2,2.66) .. (20.77,6.25)   ;
			\draw [line width=3]  [dash pattern={on 7.88pt off 4.5pt}]  (120.5,93.53) .. controls (256.5,148) and (377.5,148) .. (520.5,98.26) ;
			
			\draw (85,137) node [anchor=north west][inner sep=0.75pt]    {$p_{1}$};
			\draw (145,37) node [anchor=north west][inner sep=0.75pt]    {$q_{1}$};
			\draw (296,4) node [anchor=north west][inner sep=0.75pt]    {$p_{2}$};
			\draw (458,35) node [anchor=north west][inner sep=0.75pt]    {$q_{2}$};
			\draw (531,129) node [anchor=north west][inner sep=0.75pt]    {$p_{3}$};
			\draw (473,224) node [anchor=north west][inner sep=0.75pt]    {$q_{3}$};
			\draw (324,265) node [anchor=north west][inner sep=0.75pt]    {$p_{4}$};
			\draw (166,230) node [anchor=north west][inner sep=0.75pt]    {$q_{4}$};
			\draw (314,98) node [anchor=north west][inner sep=0.75pt]    {$t$};

			\end{tikzpicture}
			
		\end{minipage}
		
		\caption{The case $m=n=2$: the geodesic $8$-gon $p$ with two possibilities of the path $t$}
		\label{fig. 1}
		
	\end{figure}
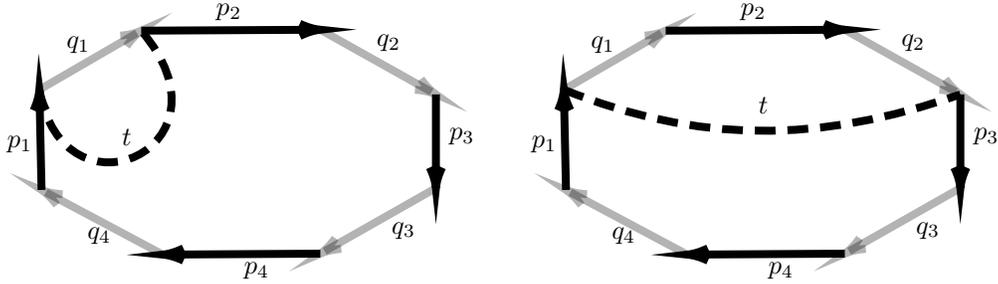
	\vskip 0.07in 
	\noindent For all $i$, the sides $p_i$ are $C$-components of $p$, and inequality \eqref{eq. long component} implies that $\widehat{\ell}_C(p_i)>4D$. So Proposition \ref{prop. isolated component} implies that the $C$-components $p_i$ cannot all be isolated in $p$. Up to relabeling the subpaths $p_i,q_i$, we may assume:
	\begin{itemize}
		\item $p_1$ is connected to either $p_k$ or some $C$-component of $q_{k-1}$ for some $k$. In the former case, let $s=p_k$ and in the latter case, let $s$ be the corresponding $C$-component of $q_{k-1}$. Let $t$ be an edge labeled by an element of $C$ connecting the terminal vertex of $p_1$ to the initial vertex of $s$, let $q'_{k-1}$ be the segment of $q_{k-1}$ from the initial vertex of $q_{k-1}$ to the initial vertex of $s$, and let $q$ be the polygon formed by $q_1,p_2,q_2,\cdots,p_{k-1},q'_{k-1}$ and $t$. Then $p_2,\cdots, p_{k-1}$ are isolated $C$-components of $q$ (if some $p_i,i\in\{2,3,\cdots,k-1\}$, is not isolated, simply rename $p_i$ to be $p_1$).
	\end{itemize}
	We refer to Figure \ref{fig. 1} for an illustration of the case $m=n=2$.
\vskip 0.07in 	
	\noindent If $k=2$, then some non-trivial initial segment of $w'_1$ represents an element of $C$, a contradiction. If $k>2$, then $q$ is a $(2k-2)$-gon with geodesic sides. $q$ has $k-2$ isolated $C$-components $p_2,\cdots, p_{k-1}$. Inequality \eqref{eq. long component} and Proposition \ref{prop. isolated component} then implies
	\[4D(k-2)<(2k-2)D,\]
	which contradicts the assumption $k>2$.$\hfill\blacksquare$
	\vskip 0.05in 
	\noindent Equation \eqref{eq. polygon} is an immediate consequence of the above claim.
\end{proof}

\subsection{Popa's intertwining techniques}
More than fifteen years ago, S. Popa  introduced  in \cite [Theorem 2.1 and Corollary 2.3]{Po03} a powerful analytic criterion for identifying intertwiners between arbitrary subalgebras of tracial von Neumann algebras. This is known as \emph{Popa's intertwining-by-bimodules technique} in the current literature. Popa's intertwining-by-bimodules technique has played a key role in the classification of von Neumann algebras program via Popa's deformation/rigidity theory. 

\begin{thm}\cite{Po03} \label{corner} Let $(\mathcal M,\tau)$ be a separable tracial von Neumann algebra and let $\mathcal P, \mathcal Q\subseteq \mathcal M$ be (not necessarily unital) von Neumann subalgebras. 
	Then the following are equivalent:
	\begin{enumerate}
		\item There exist $ p\in  \mathcal P(\mathcal P), q\in  \mathcal P(\mathcal Q)$, a $\ast$-homomorphism $\theta:p \mathcal P p\rightarrow q\mathcal Q q$  and a partial isometry $0\neq v\in q \mathcal M p$ such that $\theta(x)v=vx$, for all $x\in p \mathcal P p$.
		\item For any group $\mathcal G\subset \mathcal U(\mathcal P)$ such that $\mathcal G''= \mathcal P$ there is no sequence $(u_n)_n\subset \mathcal G$ satisfying $\|E_{ \mathcal Q}(xu_ny)\|_2\rightarrow 0$, for all $x,y\in \mathcal  M$.
	\end{enumerate}
\end{thm}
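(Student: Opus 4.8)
The statement above is Popa's intertwining‑by‑bimodules theorem, and the plan is to follow its classical proof from \cite{Po03}. I work inside the Jones basic construction $\langle\mathcal{M},e_{\mathcal{Q}}\rangle$, equipped with its canonical semifinite trace $\mathrm{Tr}$ normalized by $\mathrm{Tr}(xe_{\mathcal{Q}}y)=\tau(xy)$ for $x,y\in\mathcal{M}$; I use that $\langle\mathcal{M},e_{\mathcal{Q}}\rangle$ is the commutant of the right $\mathcal{Q}$‑action on $L^{2}(\mathcal{M})$ and the identities $e_{\mathcal{Q}}me_{\mathcal{Q}}=E_{\mathcal{Q}}(m)e_{\mathcal{Q}}$ and $\mathrm{Tr}(e_{\mathcal{Q}}m)=\tau(E_{\mathcal{Q}}(m))$ for $m\in\mathcal{M}$. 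The pivotal step is to show that condition $(1)$ is equivalent to the condition $(\star)$ that $\mathcal{P}'\cap\langle\mathcal{M},e_{\mathcal{Q}}\rangle$ contains a nonzero projection $f$ with $\mathrm{Tr}(f)<\infty$. For $(\star)\Rightarrow(1)$: the subspace $f\,L^{2}(\mathcal{M})$ is a $\mathcal{P}$‑$\mathcal{Q}$‑subbimodule of $L^{2}(\mathcal{M})$ of finite right $\mathcal{Q}$‑dimension $\mathrm{Tr}(f)$, hence is right‑$\mathcal{Q}$‑isomorphic to $\bar r(\mathbb{C}^{n}\otimes L^{2}(\mathcal{Q}))$ for some $n$ and some projection $\bar r\in\mathbb{M}_{n}(\mathcal{Q})$; the left $\mathcal{P}$‑action commutes with the right $\mathcal{Q}$‑action, so it yields a normal $\ast$‑homomorphism $\psi\colon\mathcal{P}\to\bar r\,\mathbb{M}_{n}(\mathcal{Q})\,\bar r$, and compressing $\psi$ by a matrix unit $e_{ii}$ along which the corresponding corner of $\bar r$ is nonzero produces the required projections $p,q$, the $\ast$‑homomorphism $\theta$, and the partial isometry $v$. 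For $(1)\Rightarrow(\star)$, the standard construction attaches to $(p,q,\theta,v)$ a $\mathcal{P}$‑$\mathcal{Q}$‑subbimodule of $L^{2}(\mathcal{M})$ of finite right $\mathcal{Q}$‑dimension (the $\|\cdot\|_{2}$‑closure of $\mathcal{P}v\mathcal{Q}$, after the usual reductions), and the orthogonal projection onto it lies in $\mathcal{P}'\cap\langle\mathcal{M},e_{\mathcal{Q}}\rangle$ with finite trace.

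Granting $(1)\Leftrightarrow(\star)$, the theorem reduces to $(\star)\Leftrightarrow(2)$. For $(\star)\Rightarrow(2)$ I argue the contrapositive: suppose some group $\mathcal{G}$ with $\mathcal{G}''=\mathcal{P}$ carries a sequence $(u_{n})\subseteq\mathcal{G}\subseteq\mathcal{U}(\mathcal{P})$ with $\|E_{\mathcal{Q}}(xu_{n}y)\|_{2}\to 0$ for all $x,y\in\mathcal{M}$, and let $a\in\mathcal{P}'\cap\langle\mathcal{M},e_{\mathcal{Q}}\rangle$ be positive with $\mathrm{Tr}(a)<\infty$, so that $a\in L^{2}(\langle\mathcal{M},e_{\mathcal{Q}}\rangle,\mathrm{Tr})$. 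Since $u_{n}\in\mathcal{U}(\mathcal{P})$ commutes with $a$, $\langle a,u_{n}^{*}bu_{n}\rangle_{\mathrm{Tr}}=\langle u_{n}au_{n}^{*},b\rangle_{\mathrm{Tr}}=\langle a,b\rangle_{\mathrm{Tr}}$ for every $b$. On the other hand, a direct computation using the identities above together with Cauchy--Schwarz gives, for $z,w,z',w'\in\mathcal{M}$,
\[
\bigl|\langle z'e_{\mathcal{Q}}w',\,u_{n}^{*}(ze_{\mathcal{Q}}w)u_{n}\rangle_{\mathrm{Tr}}\bigr|=\bigl|\tau\bigl(E_{\mathcal{Q}}(w'u_{n}^{*}w^{*})\,E_{\mathcal{Q}}(z^{*}u_{n}z')\bigr)\bigr|\le \|w'\|_{\infty}\,\|w\|_{2}\,\|E_{\mathcal{Q}}(z^{*}u_{n}z')\|_{2}\ \longrightarrow\ 0\quad(n\to\infty),
\]
so $u_{n}^{*}(ze_{\mathcal{Q}}w)u_{n}\to 0$ weakly in $L^{2}(\langle\mathcal{M},e_{\mathcal{Q}}\rangle,\mathrm{Tr})$ (tested first against the dense family $\{z'e_{\mathcal{Q}}w'\}$ and then, since $\|u_{n}^{*}(ze_{\mathcal{Q}}w)u_{n}\|_{2}$ stays bounded, against all of $L^{2}$). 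Combining the two facts, $\langle a,ze_{\mathcal{Q}}w\rangle_{\mathrm{Tr}}=\lim_{n}\langle a,u_{n}^{*}(ze_{\mathcal{Q}}w)u_{n}\rangle_{\mathrm{Tr}}=0$ for all $z,w$; as the $ze_{\mathcal{Q}}w$ span a dense subspace and $a\in L^{2}$, we get $a=0$. Hence $\mathcal{P}'\cap\langle\mathcal{M},e_{\mathcal{Q}}\rangle$ has no nonzero finite‑trace projection, i.e. $(\star)$ fails.

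For $(2)\Rightarrow(\star)$ (hence $(2)\Rightarrow(1)$), apply $(2)$ to the generating group $\mathcal{U}(\mathcal{P})$; using separability of $\mathcal{M}$ (a diagonal argument over a countable $\|\cdot\|_{2}$‑dense subset), the absence of a sequence $(u_{n})$ with $\|E_{\mathcal{Q}}(xu_{n}y)\|_{2}\to 0$ for all $x,y$ is equivalent to the existence of a finite $F\Subset\mathcal{M}$ and $\delta>0$ with $\sum_{x,y\in F}\|E_{\mathcal{Q}}(xuy)\|_{2}^{2}\ge\delta$ for every $u\in\mathcal{U}(\mathcal{P})$. Put $A=\sum_{x\in F}x^{*}e_{\mathcal{Q}}x\ge 0$ and $B=\sum_{y\in F}ye_{\mathcal{Q}}y^{*}\ge 0$, both of finite trace; the identity $\|E_{\mathcal{Q}}(xuy)\|_{2}^{2}=\langle u(ye_{\mathcal{Q}}y^{*})u^{*},\,x^{*}e_{\mathcal{Q}}x\rangle_{\mathrm{Tr}}$ shows $\langle uBu^{*},A\rangle_{\mathrm{Tr}}\ge\delta$ for every $u\in\mathcal{U}(\mathcal{P})$, hence $\langle c,A\rangle_{\mathrm{Tr}}\ge\delta$ for every element $c$ of the $\|\cdot\|_{2,\mathrm{Tr}}$‑closed convex hull $\mathcal{C}$ of $\{uBu^{*}:u\in\mathcal{U}(\mathcal{P})\}$. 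Since $\mathcal{C}$ is a bounded (in both $\|\cdot\|_{2,\mathrm{Tr}}$ and operator norm) $\mathcal{U}(\mathcal{P})$‑conjugation‑invariant closed convex subset of the positive cone, its unique element $a$ of minimal $\|\cdot\|_{2,\mathrm{Tr}}$‑norm satisfies $uau^{*}=a$ for all $u\in\mathcal{U}(\mathcal{P})$, so $a\in\mathcal{P}'\cap\langle\mathcal{M},e_{\mathcal{Q}}\rangle$; it is positive with $\mathrm{Tr}(a)\le\mathrm{Tr}(B)<\infty$, and $\langle a,A\rangle_{\mathrm{Tr}}\ge\delta>0$ forces $a\ne 0$. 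A spectral projection $\chi_{[\eta,\infty)}(a)$ of $a$ for small $\eta>0$ is then a nonzero finite‑trace projection in $\mathcal{P}'\cap\langle\mathcal{M},e_{\mathcal{Q}}\rangle$, giving $(\star)$, and with the first step this yields $(1)$, closing the cycle $(1)\Rightarrow(\star)\Rightarrow(2)\Rightarrow(\star)\Rightarrow(1)$.

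The main obstacle is the implication $(2)\Rightarrow(1)$ — turning the soft absence of a degenerating sequence into the concrete intertwining datum $(p,q,\theta,v)$ — which splits into two genuinely different difficulties. The first is the minimal‑norm argument that manufactures a nonzero finite‑trace element of $\mathcal{P}'\cap\langle\mathcal{M},e_{\mathcal{Q}}\rangle$: it is essential to average $B=\sum_{y\in F}ye_{\mathcal{Q}}y^{*}$ rather than $e_{\mathcal{Q}}$ itself, so that pairing against $A=\sum_{x\in F}x^{*}e_{\mathcal{Q}}x$ certifies that the limit does not vanish, and one must use separability of $\mathcal{M}$ to upgrade the hypothesis ``for all $x,y$'' to a single uniform quantitative estimate. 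The second is the structure theory of finite right $\mathcal{Q}$‑modules, which is what allows a finite‑trace projection in the relative commutant to be converted into a $\ast$‑homomorphism into a matrix amplification of $\mathcal{Q}$ and then, by a matrix‑corner compression, into the partial isometry $v$. Everything else — the identification of $\langle\mathcal{M},e_{\mathcal{Q}}\rangle$ with the commutant of the right $\mathcal{Q}$‑action, the basic‑construction trace identities, and the Cauchy--Schwarz estimates above — is routine once the framework is in place, and the non‑unitality of $\mathcal{P},\mathcal{Q}$ is absorbed by working inside the relevant corners of $\mathcal{M}$.
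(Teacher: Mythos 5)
The paper gives no proof of this statement: it is quoted verbatim as a preliminary from Popa's work \cite{Po03}, so there is nothing internal to compare against. Your argument is a correct reconstruction of Popa's original intertwining-by-bimodules proof — the equivalence with a finite-trace projection in $\mathcal P'\cap\langle\mathcal M,e_{\mathcal Q}\rangle$, the averaging of $\sum_y ye_{\mathcal Q}y^*$ via the minimal-$\|\cdot\|_{2,\mathrm{Tr}}$ element of the conjugation-invariant convex hull, and the finite right-$\mathcal Q$-module structure theory — which is exactly the source the authors cite.
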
 
\vskip 0.02in
\noindent If one of the two equivalent conditions from Theorem \ref{corner} holds then we say that \emph{ a corner of $\mathcal P$ embeds into $\mathcal Q$ inside $\mathcal M$}, and write $\mathcal P\prec_{\mathcal M}\mathcal Q$. If we moreover have that $\mathcal P p'\prec_{\mathcal M}\mathcal Q$, for any projection  $0\neq p'\in \mathcal P'\cap 1_{\mathcal P} \mathcal M 1_{\mathcal P}$ (equivalently, for any projection $0\neq p'\in\mathcal Z(\mathcal P'\cap 1_{\mathcal P}  \mathcal M 1_{P})$), then we write $\mathcal P\prec_{\mathcal M}^{s}\mathcal Q$. We refer the readers to the survey papers \cite{Po07,Vaicm,Io18} for recent progress in von Neumann algebras using deformation/rigidity theory.
\vskip 0.05in 
\noindent For future use, we recall a special case of \cite[Corollary 7]{HPV10}, that generalizes \cite[Theorem 6.16]{PV08}.
\begin{prop}[Houdayer-Popa-Vaes] \label{prop:hpvcor7}
Let $\G \curvearrowright (\mathcal A, \tau)$ be a trace preserving action, and let $M=\mathcal A \rtimes \G$. Let $B \subseteq M $ be a regular von Neumann algebra, and let $\Sg < \G$ be a subgroup such that $B \preceq_{M}  \mathcal A \rtimes \Sg$. Then, $B \preceq_{M} \mathcal A \rtimes (\cap_i g_i\Sg g_i^{-1})$ for all $g_1, \ldots, g_n \in \G$.
\end{prop}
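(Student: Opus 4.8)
The strategy is to isolate the von Neumann algebraic content—a \emph{stable}-intertwining upgrade powered by regularity—and then feed it into a group-theoretic mechanism for intersecting crossed products, this latter being the substance of \cite[Theorem 6.16]{PV08}/\cite[Corollary 7]{HPV10}. The first, soft, observation is that for any von Neumann subalgebra $P\subseteq M$ and any $u\in\mathcal U(M)$ one has $B\prec_M P\Leftrightarrow B\prec_M uPu^{*}$, and likewise for $\prec_M^s$: if $\paH\subseteq L^2(M)$ is a nonzero $B$-$P$ subbimodule finitely generated as a right $P$-module by $\eta_1,\dots,\eta_k$, then $\paH u^{*}\subseteq L^2(M)$ is a nonzero $B$-$(uPu^{*})$ subbimodule finitely generated on the right by $\eta_1u^{*},\dots,\eta_ku^{*}$. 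Taking $u=u_{g_i}$ and using $u_{g_i}(\mathcal A\rtimes\Sigma)u_{g_i}^{*}=\mathcal A\rtimes(g_i\Sigma g_i^{-1})$ already gives $B\prec_M\mathcal A\rtimes(g_i\Sigma g_i^{-1})$ for each $i$; but this by itself does \emph{not} yield the conclusion, since knowing that a (even regular) subalgebra intertwines into each of $\mathcal A\rtimes\Lambda_1$ and $\mathcal A\rtimes\Lambda_2$ separately does not force it to intertwine into $\mathcal A\rtimes(\Lambda_1\cap\Lambda_2)$—a subalgebra splitting as a direct sum of two corners lying in $\mathcal A\rtimes\Lambda_1$ and in $\mathcal A\rtimes\Lambda_2$ respectively is a counterexample. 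Regularity is used precisely to pass to the stable relation, which is the one stable under intersection.

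\emph{Regularity upgrade.} Let $z\in\mathcal Z(B'\cap M)$ be the largest projection with $Bz\prec_M^s\mathcal A\rtimes\Sigma$; it is nonzero because $B\prec_M\mathcal A\rtimes\Sigma$. For $u\in\mathcal N_M(B)$, conjugating the relation $Bz\prec_M^s\mathcal A\rtimes\Sigma$ by $u$ and then applying the conjugation observation with $u^{*}$ shows $B(uzu^{*})=u(Bz)u^{*}\prec_M^s\mathcal A\rtimes\Sigma$; maximality of $z$, applied to $u$ and to $u^{-1}$, forces $uzu^{*}=z$. Hence $z\in\mathcal N_M(B)'\cap M=\mathcal Z(M)$, using $\mathcal N_M(B)''=M$. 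Since an intertwining realized inside the central corner $Mz$ yields the same intertwining in $M$, we may replace $(M,B)$ by $(Mz,Bz)$ and assume henceforth that $B\prec_M^s\mathcal A\rtimes\Sigma$.

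\emph{Intersecting the conjugates.} By the conjugation observation, applied now to the stable relation, $B\prec_M^s\mathcal A\rtimes(g_i\Sigma g_i^{-1})$ for every $i$. One then invokes the lemma that for arbitrary subgroups $\Lambda_1,\Lambda_2\le\Gamma$, the relations $B\prec_M^s\mathcal A\rtimes\Lambda_1$ and $B\prec_M^s\mathcal A\rtimes\Lambda_2$ together imply $B\prec_M^s\mathcal A\rtimes(\Lambda_1\cap\Lambda_2)$; since the conclusion is again stable, iterating $n-1$ times gives $B\prec_M^s\mathcal A\rtimes(\bigcap_{i=1}^n g_i\Sigma g_i^{-1})$, hence in particular $B\prec_M\mathcal A\rtimes(\bigcap_{i=1}^n g_i\Sigma g_i^{-1})$, as required. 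This lemma is proved through the basic-construction picture—$B\prec_M^s Q_j$ being detected in $B'\cap\langle M,e_{Q_j}\rangle$—combined with the crossed-product identities $(\mathcal A\rtimes\Lambda_1)\cap(\mathcal A\rtimes\Lambda_2)=\mathcal A\rtimes(\Lambda_1\cap\Lambda_2)$ and $E_{\mathcal A\rtimes(\Lambda_1\cap\Lambda_2)}=E_{\mathcal A\rtimes\Lambda_1}\circ E_{\mathcal A\rtimes\Lambda_2}$, and the orthogonal decomposition $\|E_{\mathcal A\rtimes\Lambda_1}(x)\|_2^2=\sum_{\sigma}\|E_{\mathcal A\rtimes(\Lambda_1\cap\Lambda_2)}(u_\sigma^{*}x)\|_2^2$ over a transversal $\{\sigma\}$ of $\Lambda_1/(\Lambda_1\cap\Lambda_2)$.

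\emph{Main obstacle.} The real work is this last lemma in the case $[\Lambda_1:\Lambda_1\cap\Lambda_2]=\infty$. When the index is finite, $L^2(\mathcal A\rtimes\Lambda_1)$ is a finitely generated right $\mathcal A\rtimes(\Lambda_1\cap\Lambda_2)$-module and intertwining descends immediately and without regularity; in the infinite-index case one must convert the pointwise vanishing of the summands in the displayed identity into vanishing of the whole (uniformly bounded) sum, and this is exactly where the \emph{stability} of the hypothesis is indispensable, together with the fact that $Q_1,Q_2$ are crossed products over the \emph{common} base $\mathcal A$ (so that $Q_1\cap Q_2=\mathcal A\rtimes(\Lambda_1\cap\Lambda_2)$). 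Regularity of $B$, by contrast, enters only in the upgrade step, through the $\mathcal N_M(B)$-invariance of the maximal intertwining support forcing it to be central in $M$.
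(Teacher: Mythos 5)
The paper does not actually prove this proposition; it is quoted verbatim from \cite[Corollary 7]{HPV10} (generalizing \cite[Theorem 6.16]{PV08}), so the only thing to assess is whether your reconstruction is sound. It is not: the ``intersection lemma'' on which everything rests is false as you state it. Your regularity upgrade to $B\prec_M^s\mathcal A\rtimes\Sg$ is standard and correct, but you then claim that for \emph{arbitrary} subalgebras $B$ and arbitrary subgroups $\La_1,\La_2\leqslant\G$, the relations $B\prec_M^s\mathcal A\rtimes\La_1$ and $B\prec_M^s\mathcal A\rtimes\La_2$ force $B\prec_M\mathcal A\rtimes(\La_1\cap\La_2)$, and you explicitly assert that regularity ``enters only in the upgrade step.'' Here is a counterexample: take $\mathcal A=\mathbb C$, $\G=\mathbb F_2=\langle a,b\rangle$, $\La_1=\langle a\rangle$, $\La_2=b^{-1}\langle a\rangle b$ and $B=L(\La_1)$. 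Then $B\prec_M^s L(\La_1)$ trivially, and $B\prec_M^s L(\La_2)$ because $E_{L(\La_2)}(u_{b}^{*}\,v\,u_{b})=u_{b}^{*}vu_{b}$ has $\|\cdot\|_2=\|v\|_2$ for every $v\in L(\La_1)$, so no sequence of unitaries in any corner of $B$ can satisfy Popa's criterion. Yet $\La_1\cap\La_2=\{e\}$ and $B$ is diffuse, so $B\not\prec_M\mathbb C=\mathcal A\rtimes(\La_1\cap\La_2)$.

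The failure is a double-coset phenomenon that your sketch never engages with. A finite-right-dimensional $B$--$(\mathcal A\rtimes\La_2)$ subbimodule of $L^2(M)$, for $B\subseteq\mathcal A\rtimes\La_1$, may be supported entirely in a nontrivial double coset $(\mathcal A\rtimes\La_1)u_g(\mathcal A\rtimes\La_2)$, in which case one only deduces $B\prec_M\mathcal A\rtimes(\La_1\cap g\La_2 g^{-1})$ for \emph{some} $g\in\G$ (in the example above, $g=b$ and $\La_1\cap b\La_2b^{-1}=\La_1$, which is consistent). The orthogonal decomposition $\|E_{\mathcal A\rtimes\La_1}(x)\|_2^2=\sum_\sigma\|E_{\mathcal A\rtimes(\La_1\cap\La_2)}(u_\sigma^{*}x)\|_2^2$ over a transversal of $\La_1/(\La_1\cap\La_2)$ only detects the trivial double coset, so it cannot close this gap, and stability of the hypotheses does not help (the counterexample satisfies both hypotheses stably). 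Consequently, regularity of $B$ in all of $M$ cannot be confined to the upgrade step: it is precisely what \cite{PV08,HPV10} use \emph{inside} the intersection argument --- via the action of $\mathcal N_M(B)''=M$ on the finite-trace projection in $B'\cap\langle M,e_{\mathcal A\rtimes\Sg}\rangle$ and the decomposition of the basic construction over $\Sg$-double cosets --- to reach the \emph{prescribed} conjugates $g_i\Sg g_i^{-1}$ rather than some unknown ones. As written, your argument proves a false intermediate statement and therefore does not establish the proposition.
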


\section{Some general results on $\Gamma$-invariant subalgebras of $L(\Gamma)$}

 In this section we collect together a few general facts concerning $\Gamma$-invariant subalgebras of $L(\Gamma)$.

\noindent The first result, is closely related to \cite[Theorem 3.15]{CD19}, and we include it here just for completeness as it will be used in the sequel.

\begin{thm} \label{thm: invariant factors}
	Let $\G$ be an icc group, and $B \subseteq L(\G)$ be a subfactor such that $\G \subseteq \mathcal N_{L(\G)}(B)$. Then, there exists a normal subgroup $\Sigma \unlhd \G$ such that $B \vee (B' \cap L(\Sigma))= L(\Sigma)$. Moreover, one can find collections of unitaries $\{v_g\}_{g\in \Sg}\subset \mathcal U(B)$ and $\{w_g\}_{g\in \Sg}\subset \mathcal U(B'\cap L(\Sigma))$ and a $2$-cocycle c: $\Sigma \times \Sg \rightarrow \mathbb T$ such that for all $g,h\in \Sg$ the following relations hold:
 \begin{eqnarray}
&& u_g=v_g w_g  \\
&& v_g v_h = c_{g,h} v_{gh}\\
&& w_gw_h = \overline{c_{g,h}} w_{gh} 
	\end{eqnarray}
\end{thm}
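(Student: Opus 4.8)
The plan is to read off the normal subgroup $\Sigma$ directly from the conjugation action of $\Gamma$ on the subfactor $B$, and then to produce the unitaries $v_g,w_g$ and the cocycle $c$ by a Fourier-support argument. Since $\Gamma\subseteq\mathcal N_{L(\Gamma)}(B)$, conjugation by the canonical unitary $u_g$ restricts, for each $g\in\Gamma$, to an automorphism $\alpha_g:=\mathrm{Ad}(u_g)|_B$ of $B$, and $g\mapsto\alpha_g$ is a group homomorphism $\Gamma\to\mathrm{Aut}(B)$ (using that $\alpha_h(B)=B$). Composing with the quotient map $\mathrm{Aut}(B)\to\mathrm{Out}(B)$ and taking the kernel, I would set
\[\Sigma:=\{g\in\Gamma:\ \alpha_g\text{ is an inner automorphism of }B\},\]
which is automatically a normal subgroup of $\Gamma$.

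The key step is to show $B\subseteq L(\Sigma)$. Let $E_B\colon L(\Gamma)\to B$ be the $\tau$-preserving conditional expectation, which on $L^2$ is the orthogonal projection onto $L^2(B)$. Fix $g\in\Gamma\setminus\Sigma$ and put $v:=E_B(u_g)$. For $b\in B$ one has $u_gb=\alpha_g(b)u_g$ with $\alpha_g(b)\in B$, so $B$-bimodularity of $E_B$ gives $vb=\alpha_g(b)v$ for all $b\in B$. Taking adjoints yields $bv^*=v^*\alpha_g(b)$, hence $v^*v\,b=v^*\alpha_g(b)v=b\,v^*v$, so $v^*v\in B'\cap B=\mathbb{C}1$ because $B$ is a factor; likewise $vv^*\in\mathbb{C}1$, with the same trace. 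If $v\neq0$, a scalar multiple of $v$ would be a unitary in $B$ implementing $\alpha_g$, contradicting $g\notin\Sigma$; hence $v=0$. So $E_B(u_g)=0$ for all $g\notin\Sigma$, i.e.\ $u_g\perp L^2(B)$ for all such $g$; therefore $L^2(B)\subseteq\overline{\mathrm{span}}^{\,\|\cdot\|_2}\{u_h:h\in\Sigma\}=L^2(L(\Sigma))$, which forces $B\subseteq L(\Sigma)$.

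With this in hand I would extract the remaining data. For each $g\in\Sigma$, choose $v_g\in\mathcal U(B)$ with $\alpha_g=\mathrm{Ad}(v_g)$ and set $w_g:=v_g^*u_g$. A one-line computation (using $u_gbu_g^*=v_gbv_g^*$ for $b\in B$) shows $w_g$ commutes with $B$, so $w_g\in B'\cap L(\Gamma)$; and since $v_g\in B\subseteq L(\Sigma)$ and $u_g\in L(\Sigma)$, in fact $w_g\in B'\cap L(\Sigma)$. Both $v_gv_h$ and $v_{gh}$ implement $\alpha_g\alpha_h=\alpha_{gh}$ on the factor $B$, so $c_{g,h}:=v_{gh}^*v_gv_h$ lies in $\mathcal U(\mathcal Z(B))=\mathbb{T}$ and $v_gv_h=c_{g,h}v_{gh}$; associativity of multiplication makes $c$ a $2$-cocycle, and combining $u_gu_h=u_{gh}$ with this relation and with $[w_g,v_h]=0$ yields $w_gw_h=\overline{c_{g,h}}w_{gh}$. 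Finally $B\vee(B'\cap L(\Sigma))\subseteq L(\Sigma)$ by the previous paragraph, while conversely every generator $u_g=v_gw_g$ $(g\in\Sigma)$ of $L(\Sigma)$ lies in $B\vee(B'\cap L(\Sigma))$; hence equality holds.

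I expect the only genuinely delicate point to be the inclusion $B\subseteq L(\Sigma)$, where factoriality of $B$ is used essentially: it upgrades ``$\alpha_g\notin\mathrm{Inn}(B)$'' to ``there is no nonzero $v\in B$ with $vb=\alpha_g(b)v$ for all $b$'', which is exactly what annihilates the Fourier coefficients of $B$ supported outside $\Sigma$. This is also the reason the statement refers to $B'\cap L(\Sigma)$ and not to $B'\cap L(\Gamma)$: one must have $B\subseteq L(\Sigma)$ before one can locate $w_g$ inside $L(\Sigma)$. Everything else is routine bookkeeping with conditional expectations and central scalars.
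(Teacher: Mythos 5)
Your argument is correct and is essentially the paper's own proof: the heart of both is the computation showing that $E_B(u_g)\neq 0$ forces $E_B(u_g)E_B(u_g)^*\in B'\cap B=\mathbb C$ (factoriality of $B$), hence a factorization $u_g=v_gw_g$ with $v_g\in\mathcal U(B)$, $w_g\in\mathcal U(B'\cap L(\Gamma))$, and your $\Sigma=\ker(\Gamma\to\mathrm{Out}(B))$ coincides with the paper's $\Sigma=\{g: u_g\in\mathcal U(B)\,\mathcal U(B'\cap L(\Gamma))\}$. The only (harmless) cosmetic differences are that you run the key computation in contrapositive form to kill the Fourier coefficients outside $\Sigma$, and you obtain $L(\Sigma)=B\vee(B'\cap L(\Sigma))$ directly from the generators $u_g=v_gw_g$ rather than by citing Ge's tensor splitting theorem as the paper does.
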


\begin{proof}Using \cite[Theorem 3.15]{CD19}, one can find a normal subgroup $\Sg \lhd \G$ such that $B \subseteq L(\Sg) \subseteq B \vee B' \cap L(\G)$. For the reader's convenience we recall next the argument from \cite{CD19}. Consider the set $S=\{ g \in \G: E_B(u_{g}) \neq 0\}$ and notice it satisfies $S=S^{-1}$ and $1\in S$. Fix $g \in S$ and consider the inner automorphism of $L(\G)$ given by $\alpha={\rm ad}(u_{g})$. Note that $\alpha$ restricts to a $\ast$-automorphism of $B$. Thus we have that
	$\alpha(x)E_B(u_{g})=E_B(u_{g})x$ for all $x \in B$. This implies $b_{g}:=E_B(u_{g})u_{g}^* \in B' \cap L(\G)$. 
From the choice of $g$ we have $b_{g} \neq 0$.
	Thus we have $E_B(u_{g})=b_{g}u_{g}$. Hence, $E_B(u_{g})E_{B}(u_{g})^{\ast}=b_{g}b_{g}^*$. Therefore, $b_{g}b_{g}^* \in B' \cap B = \mathbb C$, which implies $b_{g}b_{g}^* =\tau(b_{g}b_{g}^*)1 $. Hence, by normalizing if necessary, we can find a unitary $c_{g} \in \mathcal U(B)$ such that $E_B(u_{g})= \|b_{g}\|_2 c_{g}$. As $E_B(u_{g})=b_{g}u_{g}$, we get $u_{g}= \|b_{g}\|_2 b_{g}^*c_{g}$. Note that $\|b_{g}\|_2 b_{g}^* \in \euu(B ' \cap L(\G))$. So  $u_{g} \in \euu(B) \euu(B' \cap L(\G)) \subseteq B \vee B' \cap L(\G)$.
	  
\vskip 0.04in
	
	\noindent Next let $\Sg$ be the set of all $g \in \G$ such that \begin{equation}\label{eq:uvwreln}u_{g}=v_{g}w_{g}\end{equation}where $v_{g}\in \euu(B)$, and $w_{g}\in \euu(B' \cap L(\G))$. We notice that $\Sg$ is in fact a normal subgroup of $L(\G)$ containing the set $S$, and hence $B \subseteq L(\Sg)$. Since $L(\Sg) \subseteq B \vee (B' \cap L(\G))$ by construction, we get $B \subseteq L(\Sg) \subseteq B \vee B' \cap L(\G)$.	Furthermore, since $B$ is a factor, using Ge's tensor splitting theorem \cite{Ge}, we get $L(\Sg)= B \vee (B' \cap L(\Sg))$. 
	\vskip 0.05in 

\noindent Fix  $g, h \in \Sg$. Using \eqref{eq:uvwreln} we have $v_{g h} w_{g h}=u_{g h}=u_{g}u_{h}= v_{g}w_{g}v_{h}w_{h}$ which further implies $v_{g h}^* v_{g}v_{h}= w_{g h} w_{h}^* w_{g}^*$. Notice the left-hand side of this equation belongs to $B$ while the right-hand side belongs to $B' \cap L(\Sg)$. Since $B$ is a factor, we necessarily have that 
	\begin{equation} \label{eq:cocycle}
	v_{g h}^* v_{g}v_{h}= w_{g h} w_{h}^* w_{g}^* = c_{g,h} \in \mc.
	\end{equation}
Moreover, this equation also entails that $c_{g, h}= \tau(v_{g h}^* v_{g}v_{h})$ and   $|c_{g, h}|=\|v_{g h}^* v_{g}v_{h}\|_2=1.$
\vskip 0.05in 

\noindent In conclusion, the $\mathbb T$-valued $2$-cocycle relations stated in the statement are satisfied. 
\end{proof}

\noindent For further use we also record the following result. 

\begin{prop} \label{nofindiminv}
Let $M=L(\G)$ be a $\rm II_1$ factor. Then any finite dimensional subspace  $A \subseteq M$ that is invariant under the conjugation action by $\G$ satisfies  $A =\mathbb C 1$. 
\end{prop}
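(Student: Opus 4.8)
The plan is to use the classical fact that a $\mathrm{II}_1$ factor has no nonzero finite-dimensional $\ast$-subalgebra, but here $A$ is only a linear subspace, not an algebra, so I first need to convert the finite-dimensional invariant subspace into an algebraic object. The key observation is that the conjugation action of $\G$ on $M$ is by $\tau$-preserving $\ast$-automorphisms, hence it is a unitary representation on $L^2(M)$; restricted to the finite-dimensional invariant subspace $A$ it is a finite-dimensional unitary representation of $\G$. Since $M$ is a $\mathrm{II}_1$ factor, $\G$ is icc (being a subgroup of $\mathcal U(M)$ acting by conjugation with trivial fixed points other than scalars — more precisely, $M'\cap M=\mathbb C1$).

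First I would argue that $A$ must be contained in a single finite-dimensional $\G$-invariant \emph{$\ast$-closed} subspace: replacing $A$ by $A+A^*$ (still finite-dimensional, still $\G$-invariant, since conjugation commutes with the adjoint) we may assume $A=A^*$. Second, I would consider the $\ast$-subalgebra $B$ generated by $A$. The subtle point is whether $B$ is still finite-dimensional; in general products of elements of $A$ need not stay in $A$. Instead, the cleaner route: for each $a\in A$, the conjugation orbit $\{u_g a u_g^* : g\in\G\}$ spans a finite-dimensional space (a subspace of $A$), so each $a$ generates, together with its orbit, a finite-dimensional $\G$-invariant subspace; its $\|\cdot\|_2$-closed span is still that same finite-dimensional space. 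Now the orbit of $a$ is finite-dimensional means the stabilizer of the line/flag has finite index considerations — better: the map $g\mapsto u_gau_g^*$ takes finitely many values up to the finite-dimensional span, and I can diagonalize.

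Here is the approach I would actually carry out. Take $0\neq a\in A$ with, WLOG, $a=a^*$ (using $A=A^*$) and $\tau(a)=0$ (subtract the scalar $\tau(a)1$, which spans a trivial invariant subspace; if $a$ was scalar we are done). Consider the self-adjoint element $\sum$ — rather, consider the finite rank $\G$-equivariant picture: the representation $\G\curvearrowright A$ decomposes into irreducibles, and since $\G$ is icc, the only finite-dimensional unitary representation appearing that can be realized inside $L(\G)$ as conjugation-invariant vectors is the trivial one — because a conjugation-invariant vector $x$ satisfies $u_gxu_g^*=x$, i.e. $x\in\G'\cap M=\mathbb C1$, so the multiplicity of the trivial representation contributes only scalars. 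For the nontrivial irreducible summands I need a contradiction. Take $a$ in a nontrivial summand; then the spectral projections of $a=a^*$ generate a finite-dimensional abelian von Neumann subalgebra $A_0$ whose atoms are permuted by $\G$ (conjugation sends spectral projections of $a$ to spectral projections of $u_gau_g^*$; if $u_gau_g^*$ lies in the span of the orbit, its spectral projections lie in a fixed finite-dimensional algebra). The union over the orbit generates a finite-dimensional von Neumann subalgebra $P\subseteq M$ normalized by $\G$; its minimal projections $\{p_1,\dots,p_k\}$ are permuted by $\G$, the permutation action being transitive on each orbit. Since $M$ is a factor, all $p_i$ in one orbit have equal trace $1/k\cdot(\text{something})$; the stabilizer of $p_1$ has finite index in $\G$, hence is infinite (as $\G$ is infinite), and being normalized appropriately one derives that $p_1 M p_1$ contains a copy of an infinite icc situation forcing $\tau(p_1)$ arbitrarily small — but then taking $a$ itself: $a=\sum \lambda_i p_i$ with the $\lambda_i$ permuted by $\G$, and invariance of the \emph{space} $A$ forces, after averaging over the finite orbit of $a$, a nonzero scalar multiple of $\sum p_i = 1_P$ in $A$; iterating/refining shows $1_P\in A$ and then that the only way is $P=\mathbb C1$. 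I expect \textbf{the main obstacle} to be precisely this last bookkeeping: passing from "$A$ is an invariant \emph{subspace}" to "$A$ contains an invariant \emph{$\ast$-algebra}" cleanly, and then invoking that a $\mathrm{II}_1$ factor contains no finite-dimensional $\ast$-subalgebra other than $\mathbb C1$ that is $\G$-normalized — actually even $\mathbb C^k$ embeds in $M$, so the real content is that the $\G$-normalization plus the orbit being \emph{finite} forces the projections to have full trace $1$, impossible unless $k=1$. Once that is pinned down, $A\subseteq\mathbb C1$ follows, and combined with $1\in A$ (if $A\neq 0$, average the orbit) gives $A=\mathbb C1$.
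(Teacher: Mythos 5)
Your reduction steps are fine (replacing $A$ by $A+A^*$, subtracting traces, decomposing the finite-dimensional unitary representation of $\G$ on $A$ and observing that the fixed vectors are exactly $\mathcal Z(M)=\mathbb C 1$), but the core of your argument --- converting the invariant \emph{subspace} into an invariant finite-dimensional \emph{von Neumann algebra} via spectral projections --- has a genuine gap, and you correctly sense where it is. First, a self-adjoint $a$ in a finite-dimensional invariant subspace of a $\rm II_1$ factor has no reason to have finite spectrum, so already $W^*(a)$ can be a diffuse, infinite-dimensional abelian algebra; the claim that ``the spectral projections of $a$ generate a finite-dimensional abelian subalgebra $A_0$'' is unjustified. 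Second, even if each orbit element had finite spectrum, the algebra generated by the spectral projections of the whole orbit is $W^*(\{u_gau_g^*:g\in\G\})$, and finite-dimensionality of the \emph{linear span} of the orbit says nothing about finite-dimensionality of the \emph{algebra} it generates: the powers $V, V^2, V^3,\dots$ of a finite-dimensional subspace $V$ need not stabilize. So the object $P$ on which your endgame relies is not known to exist, and the endgame itself (``forcing $\tau(p_1)$ arbitrarily small'', ``averaging over the finite orbit of $a$'') is too sketchy to assess.

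The paper avoids algebras entirely and argues at the Hilbert-space level: after subtracting traces, fix an orthonormal basis $b_1,\dots,b_k$ of the trace-zero part, approximate each $b_i$ in $\|\cdot\|_2$ by an element supported on a finite set $F_\varepsilon\subset\G\setminus\{1\}$, use the icc property to find $g$ with $gF_\varepsilon g^{-1}\cap F_\varepsilon=\emptyset$, and expand
\begin{equation*}
1=\|u_gb_1u_g^*\|_2^2=\sum_{i=1}^k\alpha(g,i)\langle u_gb_1u_g^*,b_i\rangle\leq(2\varepsilon+\varepsilon^2)k,
\end{equation*}
a contradiction for $\varepsilon$ small. (Equivalently: for icc $\G$ the conjugation representation on $\ell^2(\G)\ominus\mathbb C\delta_e$ is a direct sum of quasi-regular representations on infinite orbits, hence admits no nonzero finite-dimensional subrepresentation.) If you want to salvage your route you would need a separate argument producing an invariant finite-dimensional $*$-algebra from the invariant subspace, which is essentially as hard as the proposition itself.
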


\begin{proof} Assume by contradiction $A \neq \mathbb C1$. Consider $B= \{ x-\tau(x)1\,:\,x\in A \}\subset M$ and note it is a nontrivial finite dimensional subspace that is invariant under  conjugating by elements of $\Gamma$. Pick $b_1,..., b_k \subset B$, where $k\geq 1$, a (finite) orthonormal basis with respect to the dot product induced by the trace of $M$. Thus for every $g\in \Gamma$ and $i=1,...,k$ one can find scalars  $\alpha(g,i)\in \mathbb C$ with $\sup_{g,i} |\alpha(b,i)|\leq 1$ so that  
\begin{equation}\label{smallorbit}
    u_g b_1u_g^{-1}=\sigma_g(b_1)=\sum^k_{i=1} \alpha(g,i) b_i.
\end{equation}

 \noindent Fix $\varepsilon>0$. Using basic $\|\cdot\|_2$-approximations there is a finite set $F_\varepsilon\subset \Gamma \setminus\{1\}$ such that for every $i=1,...,k$ there is $b^\varepsilon_i\in M$ satisfying the following properties: \begin{eqnarray}
\label{normbound}&&\|b_i -b^{\varepsilon}_i\|_2\leq \varepsilon\\
\label{supbound}&& \text{its support }\sup (b^\epsilon_i)\subseteq F_\varepsilon.
\end{eqnarray}

\noindent As $\Gamma$ is icc one can find $g\in \Gamma$ such that $gF_\varepsilon g^{-1}\cap F_\varepsilon=\emptyset$; in particular, by \eqref{supbound} we have $\langle u_gb^\varepsilon_1u_{g^{-1}}, b^\varepsilon_i\rangle=0$, for all $i=1,...,k$.
 This combined with \eqref{smallorbit}, \eqref{normbound} and Cauchy-Schwartz inequality show that 

\begin{equation}\begin{split}
    1&= \| b_1\|_2^2= \| u_gb_1u_{g^{-1}}\|_2^2= \langle u_gb_1u_{g^{-1}},  u_gb_1u_{g^{-1}}\rangle =\\
 & =\sum^k_{i=1} \alpha(g,i)\langle u_gb_1u_{g^{-1}}, b_i\rangle  \leq \sum^k_{i=1} |\langle u_gb_1u_{g^{-1}}, b_i\rangle|\\
 & \leq \varepsilon k +\sum^k_{i=1} |\langle u_gb^\varepsilon_1u_{g^{-1}}, b_i\rangle| 
 \leq \varepsilon k + \varepsilon (1+\varepsilon) k+ \sum^k_{i=1} |\langle u_gb^\varepsilon_1u_{g^{-1}}, b^\varepsilon_i\rangle|\\
 & =(2\varepsilon+\varepsilon^2)k.\end{split}
 \end{equation}
 This however leads to a contradiction when $\varepsilon< (3k)^{-1}$. 
 \end{proof}

\begin{cor} \label{thm:invariantabelian}
Let $M=L(\G)$ be a $\rm II_1$ factor. Then for a von Neumann subalgebra  $A \subseteq M$ satisfying $\G \subseteq \mathcal N_M(A)$ we have that $A$ is either trivial, or diffuse. 	
\end{cor}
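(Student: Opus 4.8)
\textbf{Proof plan for Corollary \ref{thm:invariantabelian}.}

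The plan is to reduce the dichotomy to the statement of Proposition \ref{nofindiminv}. Suppose $A \subseteq M$ is a von Neumann subalgebra with $\G \subseteq \mathcal N_M(A)$ and assume $A$ is not diffuse; I must show $A = \mathbb C 1$. Since $A$ is not diffuse, it has a nonzero atom, i.e.\ there is a minimal nonzero projection $p \in A$. The first step is to consider the central support: let $z \in \mathcal Z(A)$ be the central support of $p$ in $A$. Since $\G$ normalizes $A$, conjugation by any $u_g$ permutes the minimal projections of $A$ lying under $z$, and more generally maps the "atomic part" $A_{\rm at}$ of $A$ (the part supported on the atoms, which is a von Neumann subalgebra invariant under all automorphisms of $A$) onto itself. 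Thus $A_{\rm at}$ is itself a $\G$-invariant von Neumann subalgebra, and it suffices to treat the case $A = A_{\rm at}$, i.e.\ $A$ is atomic (a direct sum of matrix algebras with a possibly infinite index set).

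Next I would identify a finite-dimensional $\G$-invariant subspace. Write $A = \bigoplus_{i \in I} M_{n_i}(\mathbb C)$ as an atomic von Neumann algebra with minimal central projections $z_i$. The trace $\tau$ restricted to $A$ assigns weight $\tau(z_i) > 0$ to each block, and $\sum_i \tau(z_i) = 1$, so for any $\delta > 0$ only finitely many blocks have $\tau(z_i) > \delta$. Conjugation by $u_g$ induces a permutation of the index set $I$ that preserves the isomorphism type of the blocks and preserves $\tau(z_i)$ (since $\tau$ is $u_g$-conjugation invariant). Hence for the largest value $t_0 = \max_i \tau(z_i)$, the finite set $I_0 = \{ i : \tau(z_i) = t_0\}$ is $\G$-invariant, so $z := \sum_{i \in I_0} z_i$ is a $\G$-invariant central projection in $A$; in particular $z$ is a $\G$-conjugation-fixed element of $M$. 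Since $M = L(\G)$ is a factor and $z$ commutes with all $u_g$, we get $z \in \mathcal Z(M) = \mathbb C 1$, forcing $z = 1$ (as $z \neq 0$). Therefore $I = I_0$ is finite and $A$ is finite-dimensional.

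Finally, a finite-dimensional von Neumann subalgebra is in particular a finite-dimensional subspace of $M$ invariant under the conjugation action of $\G$, so Proposition \ref{nofindiminv} applies directly and yields $A = \mathbb C 1$, completing the proof. The only mildly delicate point is the bookkeeping in the reduction to the atomic case and the argument that the "top-weight" central projection is genuinely $\G$-invariant; everything after that is an immediate invocation of Proposition \ref{nofindiminv}. One should double-check that $\tau|_A$ is faithful normal so that the weights $\tau(z_i)$ are strictly positive and sum to $1$, which holds since $\tau$ is a faithful normal trace on $M$.
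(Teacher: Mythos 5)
Your proof is correct and takes essentially the same route as the paper's: both extract a finite $\G$-invariant family of minimal central projections of the atomic part using trace-preservation of the conjugation action, deduce that $A$ is finite-dimensional, and conclude via Proposition \ref{nofindiminv}. The only cosmetic differences are that you invoke factoriality of $M$ to see that the top-weight central projection equals $1$ where the paper instead applies Proposition \ref{nofindiminv} to the span of the finite invariant projection family, and that your reduction to $A=A_{\rm at}$ is best justified by noting that the atomic support projection is $\G$-fixed and hence scalar --- an observation your later argument supplies in any case.
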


\begin{proof}
	Let $z \in \mathcal Z(A)$ be the unique projection such that $Az$ is completely atomic, and $A(1-z)$ is diffuse. Assume $A$ is not diffuse, so $z \neq 0$. Hence, we can find a family of nontrivial minimal projections $\paP:=\{z_i: i \in I \} \subseteq \mathcal P(\mathcal Z(A))$ and finite dimensional von Neumann algebras $M_i\cong M_{n_i}(\mathbb C)$ for some $n_i$ such that $Az= \oplus_i M_iz_i$. For each $g \in \G$, we denote by $\alpha_g$ the $\ast$-automorphism of $M$ given by $\alpha_g=$ad$(u_g)$. Note that $\alpha_g(Az)=Az$, for all $g \in \G$. Moreover, each $\alpha_g$ leaves the center $\oplus_i \mathbb C z_i$ invariant. Thus $\alpha_g$ leaves the set $\paP$ invariant and hence $\G$ acts on $\paP$ by $g \cdot z_i= \alpha_g(z_i)$. As $\alpha_g$ is trace preserving, and $M$ is a finite factor, there exists a finite subset $\paP_0:=\{z_{i_1}, \ldots, z_{i_n} \} \subseteq \paP$ which is $\G$ invariant. In particular ${\rm span} \paP_0\subset M$ is a finite dimensional $\Gamma$-invariant subspace and by Proposition \ref{nofindiminv} we get that $z_{i_1}=1$ and hence $A= M_1$. Thus $M_1$ is a $\Gamma$ invariant finite dimensional von Neumann algebra and once again Proposition \ref{nofindiminv} implies that $A=\mathbb C1$.
\end{proof}
\vskip 0.05in

\noindent We notice that if $A \subset L(\Gamma)$ is any $\G$-invariant von Neumann subalgebra then so is its center $\mathcal Z = \mathcal Z(A)$. Thus the prior results altogether imply that to understand the structure of $\G$-invariant subalgebras it is imperative to look at the diffuse abelian case. In \cite[Example 3.5]{AJ22} were presented situations of abelian von Neumann subalgebras that do not arise from subgroups. For example if we take any nontrivial wreath product $\Gamma = \Sigma \wr \Lambda$ where $\Sigma$ is abelian then for any nontrivial von Neumann subalgebra $B\subseteq L(\Sigma)$ the infinite tensor product $A:=\overline \otimes_\Lambda  B \subseteq L(\Sigma^{(\Lambda)})\subset L(\G)$ is obviously a $\Gamma$-invariant von Neumann subalgebra which may not arise from any subgroup of $\G$. One can construct more examples of the following type. Let  $\Sigma \lhd \Gamma$ be a normal inclusion where $\G$ is icc and the finite conjugacy radical of $\Sigma$ is infinite and  nonabelian. Then the center $A:= \mathcal Z(L(\Sigma))\subset L(\Sigma) \subset L(\Gamma)$ is an $\Gamma$-invariant  von Neumann subalgebra that does not arise from a subgroup.




\section{Proof of Theorem \ref{b}}

First we show that the presence of combinatorial relations of the type \eqref{eq. polygon} in a group $\Gamma$ is an obstruction to the existence of diffuse abelian $\G$-invariant subalgebra in $L(\Gamma)$. Similar analysis was used to great effect in other classification aspects in von Neumann algebras via deformation/rigidity theory, notably in \cite{Po04,IPP05} and more recently \cite{CIOS22}.  

\begin{thm} \label{thm:acylindricallyinvfactor}
Let $\G$ be a group for which there exists an infinite weakly malnormal subgroup $C < \G$ satisfying the following property:  for every finite subset $F \subset \G \setminus C$ there exists a finite subset $K \subset C$ such that
\begin{equation} \label{eq:acycondition3}
F(C \setminus K)F(C \setminus K) \cap (C \setminus K)F(C \setminus K)F = \emptyset.
\end{equation}
Assume there are commuting von  Neumann subalgebra $A_1,A_2 \subseteq L(\G)$ such that $\G \subset \mathcal N_{L(\G)}(A_1)\cap \mathcal N_{L(\G)}(A_2)$. Then either $A_1$ or $A_2$ is atomic. 
\end{thm}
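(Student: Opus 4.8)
The plan is to argue by contradiction: assume both $A_1$ and $A_2$ are non-atomic, hence (by Corollary \ref{thm:invariantabelian} applied to each $A_i$, since $\mathcal Z(A_i)$ is also $\G$-invariant) both $\mathcal Z(A_1)$ and $\mathcal Z(A_2)$ are diffuse. Actually, since we only need one diffuse abelian $\G$-invariant subalgebra to derive the contradiction, it suffices to observe that $\mathcal Z(A_1)$ is a diffuse abelian von Neumann subalgebra normalized by $\G$; the commuting hypothesis on $A_1, A_2$ will enter by supplying enough unitaries to run a combinatorial packing argument against \eqref{eq:acycondition3}. Concretely, set $A := A_1$ and note $A' \cap L(\G) \supseteq A_2$, and both $A$ and $A_2$ are $\G$-invariant and diffuse (replacing $A_2$ by its diffuse center if necessary).

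First I would set up the Fourier-support machinery: for $S \subseteq \G$ write $\mathcal P_S$ for the orthogonal projection of $L^2(L(\G))$ onto $\overline{\operatorname{span}}\{u_g : g \in S\}$, and for $x \in L(\G)$ write $\operatorname{supp}(x)$ for the set of $g$ with $\hat x(g) \neq 0$. The key point is the weak malnormality of $C$: by definition there are $g_1, \dots, g_n \in \G$ with $\bigcap_i g_i C g_i^{-1}$ finite, which via Popa's intertwining (Proposition \ref{prop:hpvcor7} / the standard ``$\preceq L(C)$ controls Fourier support'' dichotomy) forces that a diffuse $\G$-invariant subalgebra cannot have its mass concentrated on a single coset region $FCF$ — more precisely, for a diffuse subalgebra one can find unitaries whose Fourier support escapes any finite union of double cosets $F C F$. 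I would combine this with the following mechanism: pick a unitary $a \in \mathcal U(A)$ and a unitary $b \in \mathcal U(A_2)$ with $[a,b]=0$; approximate each in $\|\cdot\|_2$ by finitely supported elements; using $\G$-invariance conjugate by a suitable $u_g$ with $g \notin C$ (chosen using weak malnormality / iccness so that $gFg^{-1} \cap F$ behaves well) and exploit \eqref{eq:acycondition3} to show that the supports of $a$, $b$, $u_g a u_g^*$, $u_g b u_g^*$ are forced into regions that \eqref{eq:acycondition3} declares disjoint, yielding $1 = \|a\|_2^2 \leq O(\varepsilon)$, a contradiction.

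The structure of the argument will mirror the proof of Theorem \ref{atomiccorner}, but with the role of the ball $\mathfrak B_C$ replaced by the coset region $FCF$ and the combinatorial input \eqref{eq:csu13thm3.1} replaced by the quadratic packing identity \eqref{eq:acycondition3}; the two commuting algebras are needed precisely because \eqref{eq:acycondition3} is a statement about products of \emph{four} terms $F(C\setminus K)F(C\setminus K)$, so one needs one unitary from $A_1$ and one from $A_2$ (together with a conjugate of each) to manufacture four factors whose supports land in $(C\setminus K)$-type sets separated by $F$'s. I would show first, using diffuseness of $A_i$ and the $\G$-invariance, that after cutting down and conjugating we may assume $a \in \mathcal U(A_1)$ has Fourier support essentially inside $F(C\setminus K)$ for a prescribed finite $F$ and controllably large finite $K \subset C$, and likewise for $b \in \mathcal U(A_2)$; then $\langle u_g a b u_g^*, \, a b \rangle$ is on one hand close to $\|ab\|_2^2 \cdot(\text{something of size }1)$ by an almost-orthogonality computation, and on the other hand forced to be small because the supports of $u_g(ab)u_g^* $ and $ab$ are, by \eqref{eq:acycondition3}, disjoint.

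The main obstacle I anticipate is the first reduction — showing that a diffuse $\G$-invariant subalgebra $A_i$ contains unitaries whose Fourier mass is arbitrarily small off of $F(C \setminus K)$ for every finite $F$ once $K$ is taken large enough. This is where weak malnormality of $C$ must be used essentially: without some intertwining/transversality input, a diffuse subalgebra need not have Fourier support confined near $C$ at all. I expect the correct route is to first establish that $A_i \not\preceq_{L(\G)} L(C)$ is impossible to avoid in a useful way — rather, one uses that $A_i \vee A_i' \cap L(\G)$-type considerations together with Proposition \ref{prop:hpvcor7} (intersecting conjugates of $C$) show that if the Fourier support of a net of unitaries in $A_i$ failed to escape $FCF$, then $A_i$ would intertwine into $L(\bigcap_i g_iCg_i^{-1})$, a finite (hence atomic) algebra, contradicting diffuseness; combined with $\G$-invariance one then upgrades ``escapes $FCF$'' to ``mass concentrates in $F(C\setminus K)$ after conjugation,'' which feeds the packing estimate. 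Making this last upgrade quantitative and compatible with the commuting-algebra bookkeeping is the technical heart of the proof.
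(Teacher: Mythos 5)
Your overall strategy is in the right family --- contradiction from double diffuseness, ruling out $A_j \preceq_{L(\G)} L(C)$ via weak malnormality and Proposition \ref{prop:hpvcor7}, then a support/packing argument against \eqref{eq:acycondition3} using one unitary from each of the two commuting algebras --- but two of your key mechanisms are set up backwards, and as written the argument would not close.

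First, the support localization goes the wrong way. You propose to produce unitaries in $A_i$ whose Fourier mass concentrates \emph{on} $F(C\setminus K)$; but mass concentrated on finitely many translates of $C$ is exactly the kind of behaviour that $A_i \npreceq_{L(\G)} L(C)$ forbids, so this reduction is not only unavailable, it points in the opposite direction of what the hypotheses give you. The usable consequence of $A_i \npreceq_{L(\G)} L(C)$ (via Theorem \ref{corner}) is a unitary $a_i \in \mathcal U(A_i)$ with $\|E_{L(C)}(a_i)\|_2$ small; combining this with Kaplansky density and subtracting the $L(C)$-part, one gets an approximant $a_i^{\varepsilon}$ with $\|a_i^{\varepsilon}\|_{\infty} \le 2$ supported on a \emph{finite} subset $F_{\varepsilon} \subset \G\setminus C$. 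Second, your conjugating element is chosen from the wrong set: you take $g \notin C$, but to land in the two sides of \eqref{eq:acycondition3} one must conjugate by $u_g$ with $g, g^{-1} \in C\setminus K$ (possible since $C$ is infinite and $K$ is finite), so that $u_g a_1^{\varepsilon} u_g^{*} \, a_2^{\varepsilon}$ is supported in $(C\setminus K)F_{\varepsilon}(C\setminus K)F_{\varepsilon}$ while $a_2^{\varepsilon}\, u_g a_1^{\varepsilon} u_g^{*}$ is supported in $F_{\varepsilon}(C\setminus K)F_{\varepsilon}(C\setminus K)$. Relatedly, the quantity to estimate is not $\langle u_g(ab)u_g^{*}, ab\rangle$ (there is no reason for this to be close to $1$), but rather $\langle u_g a_1 u_g^{*}\, a_2,\; a_2\, u_g a_1 u_g^{*}\rangle$, which equals $\|u_g a_1 u_g^{*} a_2\|_2^2 = 1$ \emph{exactly}, because $u_g a_1 u_g^{*} \in A_1$ by $\G$-invariance and $A_1$ commutes with $A_2$; replacing $a_1, a_2$ by $a_1^{\varepsilon}, a_2^{\varepsilon}$ costs $O(\varepsilon)$, while the disjointness supplied by \eqref{eq:acycondition3} annihilates the approximated inner product, giving $1 \le 15\varepsilon$. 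With these corrections the proof does go through along the lines you sketch; without them, the ``technical heart'' you identify (upgrading to concentration on $F(C\setminus K)$) is a step that cannot be carried out.
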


\begin{proof}
	Assume for the sake of contradiction that both  $A_1$, $A_2$ are diffuse. Henceforth we denote by $M=L(\G)$.
	
	\vskip 0.05in
	\noindent Next we briefly argue that $A_j \npreceq_M L(C)$ for $j=1,2$.  Indeed, as $A_j$ is regular in $M$, if $A_j \preceq_M L(C)$, by Proposition \ref{prop:hpvcor7} we would have that $A_j \preceq_M L(\cap_{i=1} g_iCg_i^{-1})$ for all $g_1,...,g_n \in \G$. However, since $C$ is weakly malnormal there exist $g_1,...,g_k\in \G\setminus C$ such that  $|\cap_{i=1} g_iCg_i^{-1}|< \infty$. Using this we further get that $A_j$ has a nontrivial atomic corner, which contradicts that $A_j$ is diffuse. 
	
	\vskip 0.05in
	\noindent We now prove the following 
	\begin{claim} \label{claim:kaplanskyestimate}
		For every $j=1,2$ and $\varepsilon >0$ there exists $a_j \in \euu(A_j)$, a finite set $F^j_{\varepsilon} \subset \G \setminus C$, and an element  $a^j_{\varepsilon} \in M$ supported on $F^j_{\varepsilon}$ such that $\|a^j_{\varepsilon}\|_{\infty} \leq 2$, and $\|a_j-a^j_{\varepsilon}\|_2< \varepsilon$.
	\end{claim}
	
	\vskip 0.05in
\noindent \textit{Proof of Claim \ref{claim:kaplanskyestimate}:} Fix $j=1,2$ and $\varepsilon>0$. As $A_j \npreceq_M L(C)$, by Theorem \ref{corner} there is $a_j \in \euu(A)$ such that \begin{equation}\label{ineq1}\|E_{L(C)}(a_j)\|_2< \frac{\varepsilon}{3}.\end{equation} By Kaplansky's Density Theorem, there exists $b_j \in M$ supported on a finite subset $G_j \subset \G$ such that \begin{equation}\label{ineq2}\|a_j-b_j\|_2 < \frac{\varepsilon}{3}, \text{ and } \|b_j\|_{\infty} \leq 1.\end{equation}  
\vskip 0.05in
\noindent Now, let $a^j_{\varepsilon}= b_j-E_{L(C)}(b_j)$, and note that $a^j_{\varepsilon}$ is supported on the finite set $F^j_{\varepsilon}:=G_j \setminus C$. Moreover we can see that 
\begin{equation*}\|a^j_{\varepsilon}\|_{\infty}=\|b_j-E_{L(C)}(b_j)\|_{\infty} \leq \|b_j\|_{\infty}+ \|E_{L(C)}(b_j)\|_{\infty} \leq 2. \end{equation*}
Finally, using triangle inequality together with other basic estimates and \eqref{ineq1}-\eqref{ineq2} we see that 
\begin{align*}
\|a_j-a^j_{\varepsilon}\|_2 &= \|a_j-(b_j-E_{L(C)}(b_j))\|_2 \leq \|a_j-b_j\|_2 + \|E_{L(C)}(b_j)\|_2 \\ &\leq \|a_j-b_j\|_2 + \|E_{L(C)}(a_j)\|_2+ \|E_{L(C)}(a_j-b_j)\|_2 \\
& \leq 2\|a_j-b_j\|_2 + \|E_{L(C)}(a_j)\|_2 \leq 3\cdot \frac{\varepsilon}{3}= \varepsilon, 
\end{align*} which finishes the proof of the claim. $\hfill\blacksquare$

\vskip 0.15in

\noindent Fix $j=1,2$ and $\varepsilon >0$, let $a_j \in \euu(A_j)$, $F^j_{\varepsilon} \subset \G \setminus C$ and $a^j_{\varepsilon}$ as in the statement of  Claim~\ref{claim:kaplanskyestimate}. Let $F_\varepsilon=  F^1_\varepsilon \cup F^2_\varepsilon$.  Since $A_1$ and $A_2$ are commuting $\G$ invariant von Neumann subalgebras, we have $u_ga_1 u_g^*a_2=a_2u_ga_1u_g^*$ for all $g \in \G$. Thus using basic estimates we get
\begin{align*}
1&=\|u_ga_1u_g^*a_2\|_2^2= |\langle u_ga_1u_g^*a_2, u_ga_1u_g^*a_2 \rangle|= |\langle u_ga_1u_g^*a_1, a_2u_ga_1u_g^*\rangle  | 
\\
&\leq |\langle u_g(a_1-a^1_{\varepsilon})u_g^*a_2, a_2u_ga_1u_g^* \rangle | + |\langle u_ga^1_{\varepsilon}u_g^*a_2, a_2u_ga_1u_g^* \rangle|  \leq \|a_1-a^1_{\varepsilon}\|_2 + |\langle u_ga^1_{\varepsilon}u_g^*a_2, a_2u_ga_1u_g^* \rangle|\\&
\leq \varepsilon + |\langle u_ga^1_{\varepsilon}u_g^*(a_2-a^2_{\varepsilon}), a_2u_ga_1u_g^* \rangle |+ |\langle u_ga^1_{\varepsilon}u_g^*a^2_{\varepsilon}, a_2u_ga_1u_g^* \rangle| \\
&\leq \varepsilon + \|a^1_{\varepsilon}\|_{\infty}\|a_2-a^2_{\varepsilon}\|_2 + |\langle u_g a^1_{\varepsilon}u_g^*a^2_{\varepsilon}, au_gau_g^* \rangle|
\\
&\leq \varepsilon(1+ \|a^1_{\varepsilon}\|_{\infty}) + |\langle u_ga^1_{\varepsilon}u_g^*a^2_{\varepsilon}, (a_2-a^2_{\varepsilon})u_ga_1u_g^* \rangle |
+ |\langle u_ga^1_{\varepsilon}u_g^*a^2_{\varepsilon}, a^2_{\varepsilon}u_ga_1u_g^*|\\
& \leq \varepsilon(1+\|a^1_{\varepsilon}\|_{\infty}+\|a^1_{\varepsilon}\|_{\infty} \|a^2_\varepsilon\|_\infty) + 
|\langle u_ga^1_{\varepsilon}u_g^* a^2_{\varepsilon},a^2_{\varepsilon}u_g(a_1-a^1_{\varepsilon})u_g^* \rangle | +|\langle u_g a^1_{\varepsilon}u_g^* a^2_{\varepsilon}, a^2_{\varepsilon}u_ga^1_{\varepsilon}u_g^* \rangle|\\
& \leq \varepsilon(1+\|a_{\varepsilon}\|_{\infty}+ \|a^1_{\varepsilon}\|_{\infty} \|a^2_\varepsilon\|_\infty+ \|a^1_{\varepsilon}\|_{\infty} \|a^2_\varepsilon\|^2_\infty)
+ |\langle u_g a^1_{\varepsilon}u_g^* a^2_{\varepsilon}, a^2_{\varepsilon}u_ga^1_{\varepsilon}u_g^* \rangle|.
\end{align*}
\vskip 0.05in

\noindent Since $\|a^j_\varepsilon\|_\infty\leq 2$ this further implies that for all $g\in \Gamma$ we have
\begin{equation} \label{eq:abelianconditionestimate}
1 \leq 15 \varepsilon + |\langle u_g a^1_{\varepsilon}u_g^* a^2_{\varepsilon}, a^2_{\varepsilon}u_ga^1_{\varepsilon}u_g^* \rangle|.
\end{equation}
Now, let $K \subseteq C$ be the finite subset corresponding to $F_\varepsilon$ that satisfies condition~\eqref{eq:acycondition3}. Since $C$ is infinite, there exists $g \in C \setminus K$ such that $g^{-1} \in C \setminus K$, and therefore by condition~\eqref{eq:acycondition3} we have \begin{equation}
F_{\varepsilon}(C\setminus K)F_{\varepsilon}(C\setminus K) \cap (C\setminus K)F_{\varepsilon}(C\setminus K)F_{\varepsilon}= \emptyset.\end{equation}
\vskip 0.05in 
\noindent However, this implies that $\langle u_g a^1_{\varepsilon}u_g^* a^2_{\varepsilon}, a^2_{\varepsilon}u_ga^1_{\varepsilon}u_g^* \rangle=0$. Using this in inequality~\eqref{eq:abelianconditionestimate} we get $1 \leq 15\varepsilon$, which is a contradiction for $\varepsilon$ sufficiently small. This finishes the proof.
\end{proof}

\vskip 0.05in
\noindent \textit{Proof of Theorem \ref{b}}. By Theorem~\ref{thm:acylindricallyinvfactor} we get that $\mathcal Z(B)$ is atomic. By Corollary~\ref{thm:invariantabelian} we get that $\mathcal Z(B)=\mathbb C$, and hence $B$ is a factor.
Using Theorem Theorem~\ref{thm: invariant factors} one can find a normal subgroup $\Sigma \lhd \Gamma$ such that $B\vee (B'\cap L(\Sigma))=L(\Sigma)$. As $\Sigma\lhd \Gamma$ is normal  it follows that its FC-center $\Sigma^{fc} \lhd \Gamma$ is an amenable normal subgroup. Since $\Gamma$ is icc acylindrically hyperbolic it follows that $\Sigma^{fc}=1$. In particular, $\Sigma$ is an icc group and hence $B$ and $B'\cap L(\Sigma)$ are factors. Thus using Theorem~\ref{thm:acylindricallyinvfactor} we get that either $B \cong M_n(\mathbb C)$ or $B'\cap L(\Sigma)\cong M_n(\mathbb C)$ for some $n\in \mathbb N$. Moreover, both $B$ and $B'\cap L(\Sigma)$ are invariant under the conjucagy action of $\Gamma$. However, as $\Gamma$ is icc, Proposition \ref{nofindiminv} implies that $n=1$ which yields the desired conclusion.$\hfill\square$

    \section{Proof of Theorem \ref{a}}
In this section we will investigate $\Gamma$-invariant subalgebras in II$_1$ factors associated with groups satisfying representations-valued cohomological type of negative curvature. Specifically, we will be using deformation/rigidity arguments tailored to the analysis of arrays/quasicocycles from \cite{Pet09, Pet09b, Si10,CP10,Va10,CS11,CSU11,CSU13,CKP15} to prove Theorem \ref{a} and other related results. 
\vskip 0.07in 
\noindent Our first theorem shows that in such group factors if there are $\Gamma$-invariant von Neumann subalgebras that may not be implemented by normal subgroups they have to be necessarily amenable and have diffuse centers.    

\begin{thm} \label{thm:nononameninv}
Let $\G$ be an icc exact group satisfying one of the following conditions:
	 \begin{enumerate}
	 \item [1)] $\G$ admits an unbounded quasi-cocycle into an weakly-$\ell^2$, mixing representation; 
	\item [2)] $\G$ admits a proper array into a weakly-$\ell^2$ representation.
	  
	\end{enumerate}
	Assume $N \subseteq L(\G)$ is a nonamenable von Neumann subalgebra, such that $\G \subseteq \mathcal N_{L(\G)}(N) $. Then, there exists a normal subgroup $\Sg \lhd \G$ such that $N=L(\G)$.
\end{thm}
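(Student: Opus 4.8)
The plan is to deduce the statement from the general structure results already established (Proposition~\ref{nofindiminv}, Corollary~\ref{thm:invariantabelian}, Theorem~\ref{thm: invariant factors}) together with the spectral gap rigidity of the Gaussian deformation attached to the given data. Write $M=L(\G)$, let $\pi\colon\G\to\mathcal O(\mathcal H)$ be the given orthogonal representation, let $q$ denote the corresponding unbounded quasicocycle in case~1) or proper array in case~2), and let $(V_t)_{t\in\mathbb R}$ be the associated Gaussian deformation of $\tilde M=L^\infty(X,\mu)\rtimes\G$ from Subsection~\ref{sec:prelgauss}.

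First I would reduce to the situation where $N$ has no amenable direct summand. Let $z_0\in\mathcal Z(N)$ be the largest central projection with $Nz_0$ amenable. Since $z_0$ is canonical and $\G$ normalizes $N$, it is fixed by each automorphism $\operatorname{ad}(u_g)|_N$, $g\in\G$, so $u_gz_0u_g^*=z_0$; hence $\mathbb C z_0+\mathbb C 1\subseteq M$ is a finite dimensional subspace invariant under the conjugation action of $\G$, and Proposition~\ref{nofindiminv} forces $z_0\in\mathbb C 1$. As $N$ is nonamenable this gives $z_0=0$, that is, $N$ has no amenable direct summand.

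Next I would show that $N'\cap M=\mathbb C 1$. Since $\G$ is exact, $\pi$ is weakly-$\ell^2$, and $N$ has no amenable direct summand, part~d) of Theorem~\ref{propgaussdef} gives $e_M^\perp\circ V_t\to 0$ uniformly on $(N'\cap M)_1$; by the transversality property (part~a) of the same theorem) this is equivalent to $V_t\to Id$ uniformly on $(N'\cap M)_1$. The relative commutant $A:=N'\cap M$ is a von Neumann subalgebra of $M$ normalized by $\G$ (because $N$ is). In case~1) Theorem~\ref{atomiccorner} applies directly and shows that $A$ is not diffuse. In case~2), since the array $q$ is proper, the ball $\mathfrak B_C$ provided by part~e) of Theorem~\ref{propgaussdef} is finite, so $P_{\mathfrak B_C}$ has finite rank on $L^2(A)$; were $A$ diffuse one could pick $a\in\mathcal U(A)$ with $\|P_{\mathfrak B_C}(a)\|_2$ arbitrarily small, contradicting $\|a-P_{\mathfrak B_C}(a)\|_2<\varepsilon$ together with $\|a\|_2=1$, so again $A$ is not diffuse. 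Thus $A$ is a $\G$-invariant von Neumann subalgebra of $M$ that is not diffuse, and Corollary~\ref{thm:invariantabelian} yields $N'\cap M=\mathbb C 1$; in particular $N$ is a subfactor of $M$.

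Finally I would invoke Theorem~\ref{thm: invariant factors}: since $\G$ is icc and $N$ is a subfactor normalized by $\G$, there is a normal subgroup $\Sigma\lhd\G$ with $N\vee(N'\cap L(\Sigma))=L(\Sigma)$; because $N'\cap L(\Sigma)\subseteq N'\cap M=\mathbb C 1$, this collapses to $N=L(\Sigma)$ (the ``$N=L(\G)$'' in the statement being a misprint for ``$N=L(\Sigma)$'', consistent with Theorem~\ref{b} and the abstract). The only genuinely substantive ingredient is the middle step: the spectral gap estimate of part~d) of Theorem~\ref{propgaussdef} and the ``not diffuse'' dichotomy for the relative commutant (which for quasicocycles is precisely Theorem~\ref{atomiccorner} and for proper arrays rests on the finiteness of the balls $\mathfrak B_C$); granting these, the remaining implications are formal.
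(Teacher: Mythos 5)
Your proof is correct, and while it runs on the same engine as the paper's (spectral gap for the Gaussian deformation via part d) of Theorem \ref{propgaussdef}, transversality, the non-diffuseness dichotomy of Theorem \ref{atomiccorner} respectively the finiteness of the balls $\mathfrak B_C$ for proper arrays, Corollary \ref{thm:invariantabelian}, and Theorem \ref{thm: invariant factors}), it is organized differently in a way that genuinely streamlines the argument. The paper does not reduce to the case where $N$ has no amenable direct summand: it assumes by contradiction that $\mathcal Z(N)$ is diffuse, picks a central projection $p$ so that $Np$ has no amenable direct summand, obtains uniform vanishing of $e_M^{\perp}\circ V_t$ only on $(\mathcal Z(N))_1p$, and then needs a separate Dixmier-type averaging step (Claim \ref{claim:fromcornertoall}), exploiting factoriality of $L(\G)$ and the $\G$-invariance of $\mathcal Z(N)$, to upgrade this to all of $(\mathcal Z(N))_1$; after invoking Theorem \ref{thm: invariant factors} it must then run a second, parallel deformation argument on $N'\cap L(\Sigma)$. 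Your observation that the maximal central projection $z_0$ with $Nz_0$ amenable is canonical, hence fixed by every ${\rm ad}(u_g)$, and therefore scalar (indeed $z_0\in L(\G)'\cap L(\G)=\mathbb C1$ directly since $\G$ is icc, so Proposition \ref{nofindiminv} is not even needed at that point) lets you apply part d) to $N$ itself and kill the full relative commutant $N'\cap L(\G)$ in one stroke; this subsumes both of the paper's deformation arguments and renders Claim \ref{claim:fromcornertoall} unnecessary. Two small points confirming your reading: ``$N=L(\G)$'' in the statement is indeed a misprint for ``$N=L(\Sigma)$''; and since Theorem \ref{atomiccorner} is proved only for unbounded quasicocycles (its proof uses the defect and the separation property of the balls $\mathfrak B_C$), in case 2) one must fall back on properness and the finiteness of $\mathfrak B_C$, exactly as you do.
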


\begin{proof} First we show  $N$ is a factor. Let $\mathcal Z= \mathcal Z(N)$ be the center of $N$. As $u_g N u_g^*=N$ for all $g \in \G$, we also have $u_g \mathcal Z u_g^*=\mathcal Z $, for all $g \in \G$. 
\vskip 0.05in

By Corollary \ref{thm:invariantabelian}, we may now assume by contradiction that $\mathcal Z$ is diffuse. Let $q: \G \rightarrow \mathcal H_{\pi}$ be an unbounded quasi-cocycle satisfying condition 1) or a proper array satisfying condition 2). Let $M= L(\G)$. Following Section \ref{sec:prelgauss} let $M\subset \tilde M= L^\infty(X^\pi)\rtimes \Gamma$ be the Gaussian extension and let $V_t: L^2(\tilde M) \rightarrow L^2(\tilde M)$ be the Gaussian deformation corresponding to $q$. Let $e_M: \tilde M \rightarrow M$ be the orthogonal projection. 
	\vskip 0.05in
	\noindent Since $N$ is nonamenable, there is $0 \neq p \in \mathcal Z$ such that $N p$ has no amenable direct summand. Thus applying a version of Popa's spectral gap argument (see part d) in Theorem \ref{propgaussdef}) we get 
	\begin{equation} \label{eq:gaussianunitary}
	\lim\limits_{t \rightarrow 0}\left(\underset {a \in (\mathcal Z)_1}{\rm sup}\|e_M^{\perp} \circ V_t(ap)\|_2\right) = 0.
	\end{equation}
Equation~\eqref{eq:gaussianunitary} and  \cite[Proposition 6.4]{CKP15} imply that for every $g \in \G$ we have $	\lim\limits_{t \rightarrow 0}(\underset {a \in (\mathcal Z)_1}{\rm sup}\|e_M^{\perp} \circ V_t(u_gapu_g^*)\|_2 )= 0.$ Since $u_g(\mathcal Z)_1 u_g^*= (\mathcal Z)_1$, we further have
\begin{equation} \label{eq:gaussianunitaryupgrade}
\lim\limits_{t \rightarrow 0}\left(\underset {a \in (\mathcal Z)_1}{\rm sup}\|e_M^{\perp} \circ V_t(au_gpu_g^*)\|_2\right) = 0, \text{ for all } g \in \G.
\end{equation}
Next we prove the following
\begin{claim} \label{claim:fromcornertoall}
 $\lim\limits_{t \rightarrow 0}\left(\underset {a \in (\mathcal Z)_1}{\rm sup}\|e_M^{\perp} \circ V_t(a)\|_2\right)  = 0$.
 \end{claim}
\noindent \textit{Proof of Claim \ref{claim:fromcornertoall}}. Fix $\varepsilon >0$. As $L(\G)$ is a factor, by a  standard convexity argument one can find $g_1, \ldots, g_n \in \G$, and $\mu_1, \ldots, \mu_n >0$, with $\sum_{i=1}^n \mu_i =1$, such that
\begin{equation} \label{eq:weakdixmier}
\| \sum\limits_{i=1}^n \mu_i u_{g_i}pu_{g_i}^*- \tau(p)1 \|_2 < \dfrac{\varepsilon \tau(p)}{2}.
\end{equation}
By \eqref{eq:gaussianunitaryupgrade} on can find $t_{\varepsilon}>0$ such that for all $0 < |t| \leq t_{\varepsilon}$ we have
\begin{equation} \label{eq:gaussianunitaryupgrade2}
\|e_M^{\perp} \circ V_t(au_gpu_g^*)\|_2 < \dfrac{\varepsilon \tau(p)}{2}, \text{ for all } i \in \{1, \ldots, n \}, \text{ } a \in (\mathcal Z)_1.
\end{equation}
Fix $a \in (\mathcal Z)_1$ and $0< |t| \leq t_{\varepsilon}$. Then triangle inequality, and  equations~\eqref{eq:weakdixmier}, ~\eqref{eq:gaussianunitaryupgrade2} show that
\begin{align*}
\tau(p)\|e_M^{\perp} \circ V_t (a) \|_2 &= \|e_M^{\perp} \circ V_t (a \tau(p)) \|_2 \leq \dfrac{\varepsilon \tau(p)}{2}+ \|e_M^{\perp} \circ V_t (a (\sum\limits_{i=1}^n \mu_i u_{g_i}pu_{g_i}^*)) \|_2 \\
& \leq \dfrac{\varepsilon \tau(p)}{2} + \sum\limits_{i=1}^n \mu_i \| e_M^{\perp} \circ V_t (au_{g_i}pu_{g_i}^*) \|_2 \leq \dfrac{\varepsilon \tau(p)}{2}+ \dfrac{\varepsilon \tau(p)}{2}= \varepsilon \tau(p).
\end{align*}
Hence, $\|e_M^{\perp} \circ V_t(a)\|_2 \leq \varepsilon$. As $a \in (\mathcal Z)_1$, $0 < |t| \leq t_{\varepsilon}$ were arbitrary, the claim follows. \hfill $\blacksquare$ 
\vskip 0.05in

\noindent Notice that the prior claim and transversality property imply that $V_t\rightarrow I$ as $t\rightarrow 0$ uniformly on $(\mathcal Z)_1$. Thus if we where on case 1) Theorem \ref{atomiccorner} already  leads to a contradiction. So assume we are in case 2).

\noindent Let $C>0$. We denote by $\mathfrak{B}_C=\{g \in \G: \|q(g)\| \leq C \}$, and by $\mathcal P_{\mathfrak{B}_C}$ the orthogonal projection onto the Hilbert subspace of $\mathfrak{B}_C$ inside $\ell^2(\G)$. Since $V_t\rightarrow I$ as $t\rightarrow 0$ uniformly on $(\mathcal Z)_1$, by item e) in Theorem \ref{propgaussdef},  for every $\varepsilon >0$, one can find $C>0$ such that
\begin{equation} \label{eq:closeprojectionofelements}
\|a- \mathcal P_{\mathfrak{B}_C}(a)\|_2 \leq \varepsilon, \text{ for all } a \in (\mathcal Z)_1.
\end{equation}

\noindent Since $q$ is proper, $\mathfrak B_C$ are finite, and the prior inequality already contradicts that $\mathcal Z$ is diffuse.

\vskip 0.06in

\noindent Using Theorem \ref{thm: invariant factors} one can find $\Sigma\lhd \Gamma$ a normal subgroup such that $N\vee (N'\cap L(\Sigma))=L(\Sigma)$. As $N$ has no amenable direct summand then Popa's spectral gap argument implies that

 $$\lim\limits_{t \rightarrow 0}\left(\underset {a \in (N'\cap L(\Sigma))_1}{\rm sup}\|e_M^{\perp} \circ V_t(a)\|_2\right)  = 0.$$

\noindent By transversality, this implies that $V_t\rightarrow I$, as $t\rightarrow 0$, uniformly on $(N'\cap L(\Sigma))_1$. Since  $N'\cap L(\Sigma)$ is $\Gamma$ -invariant, Theorem \ref{atomiccorner} implies that  $N'\cap L(\Sigma)$ is not diffuse. Thus it admits a nontrivial atomic corner that is also invariant under the conjugation action by $\Gamma$. Then using Corollary \ref{thm:invariantabelian} we get that $N'\cap L(\Gamma)=\mathbb C1$ and hence $N=L(\Sigma)$, as desired. 
\end{proof}

\noindent In view of the prior result and Theorem \ref{thm: invariant factors} we are left to analyze amenable $\Gamma$-invariant subalgebras and more specifically the case of the abelian ones. Before proceeding to this part of the proof we need to record the following lemma which is also of independent interest.
\begin{lem}\label{inneramenablepiece} Let $A \subseteq L(\Gamma)=:M$ be an abelian von Neumann subalgebra that is $\Gamma$-invariant. Let $g\in \Gamma$ such that $E_A(u_g)\neq 0$. If we denote by $\Omega= vC_\Gamma(\langle g\rangle )\leqslant \Gamma$ one can find a projection $0\neq z_g\in A'\cap \mathcal Z(L(\Omega))$ such that $A z_g\subseteq L(\Omega)$.

\end{lem}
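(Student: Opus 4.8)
The plan is to analyze the element $b_g := E_A(u_g)u_g^* \in A'\cap M$ and show that the action of $\Gamma$ on $A$ constrains where $u_g$ can ``live'' once $E_A(u_g)\neq 0$. First I would record that since $A$ is $\Gamma$-invariant and abelian, conjugation by $u_g$ restricts to a trace-preserving automorphism $\alpha_g$ of $A$, and the standard computation (as in the proof of Theorem \ref{thm: invariant factors}) gives $E_A(u_g x) = \alpha_g(x) E_A(u_g)$ and $E_A(x u_g) = E_A(u_g) x$ for all $x \in A$, hence $\alpha_g(x) E_A(u_g) = E_A(u_g) x$ for all $x\in A$; equivalently $b_g \in A'\cap M$ and $\alpha_g(x) b_g = b_g x$. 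Writing $p_g := b_g b_g^* = E_A(u_g)E_A(u_g)^* \in A'\cap M$, one sees $p_g$ commutes with $A$ and is fixed by $\alpha_g$ (in fact $\alpha_g(x) p_g = p_g x$ forces $x p_g = \alpha_g(x) p_g$, so $\alpha_g$ is the identity on $A p_g$). Taking the support projection $z_g$ of $p_g$, which lies in $A'\cap M$ as well, I would show $z_g$ is fixed by $\alpha_g$ and, more importantly, by $\alpha_h$ for every $h\in C_\Gamma(\langle g\rangle)$: indeed such $h$ commutes with $u_g$, hence $\alpha_h$ commutes with $\alpha_g$ and sends the $\alpha_g$-fixed data to itself; a short argument with $E_A(u_g) = \alpha_h(E_A(u_g))\cdot(\text{something})$ should pin down that $\alpha_h(z_g) = z_g$, and then continuity/normality of the $\Gamma$-action extends this to all of $\Omega = vC_\Gamma(\langle g\rangle)$, the virtual centralizer.

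The second step is to identify $Az_g$ inside $L(\Omega)$. The key observation is that $u_g z_g = b_g^* E_A(u_g)$ — more precisely, from $E_A(u_g) = b_g u_g$ one gets $u_g = b_g^* E_A(u_g)$ on the support of $b_g$, so $u_g z_g \in (A'\cap M)\cdot A \subseteq$ the von Neumann algebra generated by $A$ and $A'\cap M$ restricted to $z_g$. To locate this in $L(\Omega)$, I would use that the elements $\{u_h : h \in C_\Gamma(\langle g\rangle)\}$ together with $A z_g$ generate (a corner of) a subalgebra on which everything is compatible, and then invoke the fact that $A z_g$ is ``captured'' by the Fourier support analysis: since $z_g$ is invariant under conjugation by $u_h$ for all $h\in\Omega$, and $A z_g$ is abelian and normalized by these $u_h$, one uses a maximality/containment argument analogous to Theorem \ref{thm: invariant factors} applied inside $z_g M z_g$ with the subgroup $\Omega$ in place of $\Gamma$ — but actually the cleaner route is: the set $\{h\in\Gamma : E_{A}(u_h z_g)\neq 0\}$ is symmetric, contains the centralizer's relevant elements, and generates a subgroup whose von Neumann algebra contains $A z_g$; identifying that subgroup with (a finite-index overgroup inside) $\Omega$ should give $A z_g \subseteq L(\Omega)$.

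The main obstacle I anticipate is the passage from ``$z_g$ is $\alpha_h$-fixed for $h$ in the literal centralizer $C_\Gamma(\langle g\rangle)$'' to ``$A z_g \subseteq L(\Omega)$'' for the \emph{virtual} centralizer $\Omega$, and in particular the appearance of $vC_\Gamma(\langle g\rangle)$ rather than $C_\Gamma(\langle g\rangle)$ — this strongly suggests one must also exploit that $u_g^n$ for various $n$ contributes, i.e. that $E_A(u_g)\neq 0$ forces some power relation putting $A z_g$ inside the algebra of the finite-index-extended centralizer. I would handle this by considering the stabilizer in $\Gamma$ of the central projection $z_g$ (or of the abelian algebra $Az_g$) under the conjugation action, showing it contains $C_\Gamma(\langle g\rangle)$ and is itself a subgroup $\Omega'$ with $A z_g \subseteq L(\Omega')$ by the argument of Theorem \ref{thm: invariant factors}, and then arguing $\Omega' \leq vC_\Gamma(\langle g\rangle)$ using that $b_g$ intertwines $\alpha_g$ with the identity, which forces $g$ to have small conjugacy-type behavior relative to $\Omega'$; carefully, the cleanest formulation may be to take $\Omega$ to be exactly this stabilizer and then observe it is contained in the virtual centralizer. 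A secondary technical point is ensuring $z_g \in \mathcal Z(L(\Omega))$ and not merely in $L(\Omega)'\cap M$ — this should follow because $z_g$ is $\Omega$-conjugation-invariant and lies in $A'\cap M$, and $A z_g$ sits inside $L(\Omega) z_g$, so $z_g$ is central in $L(\Omega)$.
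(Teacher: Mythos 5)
Your first paragraph correctly reproduces the paper's opening moves: from $\alpha_g(x)E_A(u_g)=E_A(u_g)x$ one gets $b_g=E_A(u_g)u_g^*\in A'\cap M$, and since $p_g=b_gb_g^*=E_A(u_g)E_A(u_g)^*$ actually lies in $A$ itself, $\alpha_g$ is the identity on $Af$, where $f$ is the support projection of $p_g$. The genuine gap is in everything after that. The paper's next step is a single elementary observation that you are missing: $\alpha_g=\mathrm{id}$ on $Af$ means $Af\subseteq \{u_g\}'\cap L(\G)=L(\langle g\rangle)'\cap L(\G)$, and the standard Fourier-coefficient argument (the coefficients of any element commuting with $u_g$ are constant on $\langle g\rangle$-conjugation orbits, so square-summability kills every coefficient at a group element with infinite $\langle g\rangle$-orbit) gives $L(\langle g\rangle)'\cap L(\G)\subseteq L(vC_\G(\langle g\rangle))=L(\Omega)$ outright. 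This is precisely why the \emph{virtual} centralizer appears, with no need for the power-relation speculation in your third paragraph. Lacking this, your second and third paragraphs substitute a route that does not close: showing $z_g$ is fixed by $\alpha_h$ for $h\in C_\G(\langle g\rangle)$ does not by itself locate $Az_g$ inside $L(\Omega)$; the set $\{h: E_A(u_hz_g)\neq 0\}$ is not the right object for a Fourier-support localization of $Az_g$; and your fallback of redefining $\Omega$ to be the stabilizer of $z_g$ proves a different statement from the lemma, which fixes $\Omega=vC_\G(\langle g\rangle)$.

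There is also a secondary gap at the end. The support projection of $p_g$ is not a priori $\Omega$-invariant, so your closing assertion that centrality of $z_g$ in $L(\Omega)$ ``should follow'' is unjustified as stated. The paper fixes this by an explicit enlargement: having $Af\subseteq L(\Omega)$, conjugate by $u_h$ for $h\in\Omega$ (using $h\Omega h^{-1}=\Omega$ and the $\G$-invariance of $A$) to get $A\,u_hfu_h^*\subseteq L(\Omega)$ for every $h\in\Omega$, and then set $z_g:=\vee_{h\in\Omega}u_hfu_h^*$. This join is a projection in $A$ that is invariant under conjugation by $\Omega$, hence lies in $A'\cap\mathcal Z(L(\Omega))$ and satisfies $Az_g\subseteq L(\Omega)$. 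Your proposal never performs this averaging step.
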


\begin{proof}
    Fix $g \in \G$ with $E_A(u_g) \neq 0$. We denote by $\alpha_g$ the automorphism of $M$ given by  ${\rm ad}(u_g)$. Note that $\alpha_g$ restricts to an automorphism of $A$. We thus have
	$\alpha_g(x)u_g=u_gx$ for all $x \in A$. Applying the conditional expectation $E_A$ we also get 
	$\alpha_g(x)E_{A}(u_g)=E_{A}(u_g)x$ for all $x \in A$. These two relations combined imply that $E_A(u_g)u_g^* \in A' \cap M$. Let $e_g \in A' \cap M$ be such that $E_A(u_g)=e_gu_g$. As $E_A(u_g) \neq 0$, we have $e_g \neq 0$. Also notice  that $E_A(e_g)=e_g e_g^*=E_A(u_g)E_A(u_g^*)\geq 0$.  
	We denote by $f=\text{sup}(E_A(e_g))= \text{sup}(e_g e_g^*)= \text{sup}(|e_g^*|)\in A $ the support projection. Since $e_g u_g \in A \subseteq A'\cap M$ then  $ae_g u_g =e_g u_g a$ for all $ a\in A$. Therefore $a e_g = \alpha_g(a) e_g$ for all $a\in A $. This implies that $e e_ge_g^* = \alpha_g(a) e_ge_g^*$ and hence $a f = \alpha_g(a) f$ for all $a\in A$; in particular, $\alpha_g$ is identity on $Af$. This further entails that 
	 $Af \subseteq L(\langle g \rangle)' \cap L(\G) \subseteq L(vC_{\G}(\langle g \rangle))$.  \vskip 0.05in
	 \noindent  To this end denote by $\Omega:= vC_{\G}(\langle g \rangle) \leqslant \G$. As $Af \subseteq L(\Omega)$, we get that $u_hAfu_h^* \subseteq L(h \Omega h^{-1})= L(\Omega)$, for all $h \in \Omega$. As $A$ is $\Omega$-invariant, this further yields
	 $A u_h fu_h^* \subseteq L(\Omega)$ for all $h \in \Omega$. Hence, letting $z_g= \vee_{h\in \Omega} u_hfu_h^*\in \mathcal Z(L(\Omega))$ we further get $A z_g \subseteq L(\Omega)$.
\end{proof}

\noindent With these preparations at hand we are ready to derive the proof of Theorem \ref{a} under the assumptions of item 2).

\begin{thm}\label{isrquasi}
Let $\G$ be a torsion free, exact group that admits an unbounded quasi-cocycle into a mixing, weakly-$\ell^2$ representation. Then  $L(\G)$ satisfies the ISR property.

\end{thm}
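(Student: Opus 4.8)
The plan is to combine Theorem \ref{thm:nononameninv} (which disposes of nonamenable $\Gamma$-invariant subalgebras) with a reduction to the diffuse abelian case using Theorem \ref{thm: invariant factors} and Corollary \ref{thm:invariantabelian}, and then to rule out diffuse abelian $\Gamma$-invariant subalgebras that are not of the form $L(\Sigma)$. So let $N \subseteq L(\Gamma)$ be $\Gamma$-invariant. First I would observe that $\mathcal Z(N)$ is again $\Gamma$-invariant, so by Corollary \ref{thm:invariantabelian} it is either $\mathbb C1$ or diffuse. If $N$ is a factor, Theorem \ref{thm: invariant factors} yields a normal subgroup $\Sigma \lhd \Gamma$ with $N \vee (N' \cap L(\Sigma)) = L(\Sigma)$; since $\Gamma$ is icc and (being acylindrically hyperbolic, as groups with unbounded quasicocycles into mixing weakly-$\ell^2$ representations are) has no nontrivial amenable normal subgroup, $\Sigma$ is icc, $N$ and $N' \cap L(\Sigma)$ are factors, and I would apply the nonamenable case (Theorem \ref{thm:nononameninv}) to $N' \cap L(\Sigma)$ together with Proposition \ref{nofindiminv} to force $N' \cap L(\Sigma) = \mathbb C1$, hence $N = L(\Sigma)$. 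The remaining and genuinely hard case is when $N$ has diffuse center; here I reduce to showing that a diffuse abelian $\Gamma$-invariant $A \subseteq L(\Gamma)$ must be contained in $L(\Sigma)$ for a normal (necessarily amenable, hence by acylindrical hyperbolicity trivial) subgroup $\Sigma$ — which, once the amenable/nonamenable dichotomy is in place, would actually be a contradiction and finish the proof.

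The core of the argument is to analyze the set $S = \{g \in \Gamma : E_A(u_g) \neq 0\}$. For each such $g$, Lemma \ref{inneramenablepiece} produces a nonzero projection $z_g \in A' \cap \mathcal Z(L(\Omega_g))$ with $A z_g \subseteq L(\Omega_g)$, where $\Omega_g = vC_\Gamma(\langle g \rangle)$. Now in a torsion free acylindrically hyperbolic group, every nontrivial element $g$ either is "generalized loxodromic" (in which case $vC_\Gamma(\langle g \rangle)$ is virtually cyclic, hence — by torsion freeness — infinite cyclic and amenable) or lies in a maximal finite-by-... normal-type piece, but in the icc acylindrically hyperbolic setting the virtual centralizers of infinite-order elements are small. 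The plan is to exploit the Gaussian deformation $V_t$ associated to the quasicocycle $q$: since $A$ is diffuse abelian and $\Gamma$-invariant, Theorem \ref{atomiccorner} shows $V_t \not\to \mathrm{Id}$ uniformly on $(A)_1$, so there is a "uniform non-smallness" of $\|q\|$ along $A$. I would then argue, via the asymptotic bimodularity (Theorem \ref{propgaussdef} b,c) and a convexity/averaging argument as in Claim \ref{claim:fromcornertoall}, that if $S$ generated only an amenable (virtually cyclic) subgroup, the deformation would have to converge uniformly on $A$ — contradicting Theorem \ref{atomiccorner} — unless $A = \mathbb C1$. Combined with the fact that $\langle S \rangle$ is a normal subgroup (it is closed under conjugation since $A$ is $\Gamma$-invariant), and that a normal amenable subgroup of an icc acylindrically hyperbolic group is trivial, this pins down that $A$ cannot be diffuse at all, giving the contradiction.

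More precisely, the chain I would assemble is: (i) $A$ diffuse abelian $\Gamma$-invariant $\Rightarrow$ $\langle S \rangle \lhd \Gamma$ is a normal subgroup with $A \subseteq L(\langle S \rangle)$ (this uses that $E_A(u_g) \neq 0$ for $g$ outside $\langle S \rangle$ is impossible by the group-von-Neumann structure, plus normality of $\langle S\rangle$ from $\Gamma$-invariance of $A$); (ii) by Lemma \ref{inneramenablepiece}, along with the weak-malnormality / small-centralizer features of acylindrically hyperbolic groups, if $\langle S \rangle$ is infinite then it contains an infinite-order element $g$ whose virtual centralizer $\Omega_g$ is amenable, and on a corner $A z_g \subseteq L(\Omega_g)$ is intertwined into an amenable piece; (iii) feeding this into the Gaussian deformation and using Theorem \ref{propgaussdef}(e) plus properness-type control of $q$ on $\Omega_g$ (mixing of the representation makes $q$ restricted to the amenable $\Omega_g$ unbounded/proper enough), one derives $\|a - \mathcal P_{\mathfrak B_C}(a)\|_2 < \varepsilon$ on $(A z_g)_1$, contradicting diffuseness of $A z_g$ as in the proof of Theorem \ref{thm:nononameninv}; (iv) hence $\langle S \rangle$ is finite, but $\Gamma$ is torsion free, so $\langle S \rangle = 1$, i.e. $A = \mathbb C1$ — contradiction with $A$ diffuse. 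The main obstacle, I expect, is step (ii)–(iii): controlling the interaction between $A$, the virtual centralizers $vC_\Gamma(\langle g\rangle)$, and the Gaussian deformation simultaneously — in particular showing that the restriction of the quasicocycle to these amenable centralizer subgroups is genuinely unbounded (using mixing of $\pi$ and that $\Omega_g$ is infinite) so that Theorem \ref{propgaussdef}(e) bites and yields the finiteness contradiction; packaging this so it also handles the case where $A$ is only spread across $L(\Omega_g)$ on a corner requires a careful cutting argument patching together the projections $z_g$.
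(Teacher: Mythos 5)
Your overall skeleton --- reduce to the center $\mathcal Z(N)$, rule out the diffuse abelian case using Lemma \ref{inneramenablepiece} and the Gaussian deformation, then finish the factor case with Theorems \ref{thm: invariant factors} and \ref{thm:nononameninv} --- is the paper's skeleton, but the core step has two genuine gaps. The first is that you repeatedly import geometry from acylindrical hyperbolicity (small/virtually cyclic centralizers of infinite-order elements, weak malnormality, triviality of normal amenable subgroups), justified by the claim that a group with an unbounded quasicocycle into a mixing weakly-$\ell^2$ representation is acylindrically hyperbolic. That implication is not available (only the converse is known), so none of these geometric facts can be used. The paper works instead purely with the analytic data: for $\Omega=vC_\G(\langle g\rangle)$ it uses torsion-freeness to see $\langle g\rangle$ is infinite cyclic, deduces from the structure of virtual centralizers that $L(\Omega)$ has property Gamma, and then splits into cases according to whether $\Omega$ is amenable.

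The second, more serious, gap is that your dichotomy on $\Omega_g$ runs backwards. To contradict diffuseness of $Az_g$ via Theorem \ref{atomiccorner} (or Theorem \ref{propgaussdef}(e)) you must first establish that $V_t\to\mathrm{Id}$ uniformly on $(Az_g)_1$, and the way to get this is to show $q$ is \emph{bounded} on $\Omega_g$ --- which the paper does, when $\Omega$ is nonamenable, by combining property Gamma with \cite[Theorem 3.1]{CSU13}. Your step (iii) instead asserts that mixing makes $q|_{\Omega_g}$ ``unbounded/proper enough''; this is unjustified (the restriction of a quasicocycle to an amenable subgroup can perfectly well be bounded, and the theorem assumes no properness), and properness would in any case be incompatible with the uniform convergence you need in order to invoke part (e). For the case $\Omega$ amenable you are missing the actual closing argument: $Az\subseteq L(\Omega)$ together with $\G$-invariance gives $A\preceq^s_{L(\G)}L(\Omega)$, Proposition \ref{prop:hpvcor7} then forces $\Omega$ to be s-normal in $\G$, and Thom's results \cite[Lemma 3.3, Theorem 3.4]{T09} --- the essential cohomological input, which you never invoke --- exclude infinite amenable s-normal subgroups of a group carrying an unbounded quasicocycle into a mixing weakly-$\ell^2$ representation. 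Relatedly, you skip the preliminary reduction (via Zorn's lemma) to the minimal normal subgroup $\Sigma$ with $A\subseteq L(\Sigma)$, the verification that $\Sigma$ is nonamenable and icc, and that $q|_\Sigma$ remains an unbounded quasicocycle of the right type; without this the s-normality contradiction has nothing to bite on.
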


\begin{proof}Let $N\subseteq L(\Gamma)$ be a $\Gamma$-invariant von Neumann subalgebra. Let $A =\mathcal Z(N)$ and note it is $\G$-invariant as well. Next we show that $A=\mathbb C1$. Suppose by contradiction $A\neq \mathbb C 1$. By Corollary \ref{thm:invariantabelian}  $A$ must be diffuse.
	Let $1\neq \Sg \lhd \G$ be the (infinite) smallest normal subgroup of $\G$ such that $A \subseteq L(\Sg)$. The existence of $\Sg$ is guaranteed by Zorn's lemma together with the fact that $A$ is normalized by $\G$.  
	\vskip 0.07in
\noindent	Observe that \cite[Lemma 3.3, Theorem 3.4]{T09} imply that $\Sg$ is nonamenable. Next we briefly argue that $\Sg$ is icc. Using the assumptions and \cite[Theorem 7.1]{CKP15} we must have that $FC(\Sg)$ is finite. Moreover, since $\Sigma\lhd \Gamma$ it follows that $FC(\Sg)\lhd \Gamma$ is also normal. As $\Gamma$ is icc we conclude that $FC(\Sg)=1$ and hence $\Sg$ is icc. In conclusion $\Sg\lhd \G$ is a normal icc subgroup. Therefore, if $q$ is an unbounded quasicocycle into a mixing weakly-$\ell^2$ representation of $\G$ then its restriction $q_{|_\Sg}$ to $\Sg$ is also an unbounded quasicocycle into a mixing, weakly-$\ell^2$ representation of $\Sg$ (see \cite[Proposition 4.4 d)]{CKP15}).  Therefore, using the prior paragraph we can assume without any loss of generality that $A\not\subset L(\Sigma)$ for any proper  subgroup $\Sigma < \Gamma$.  
	\vskip 0.07in 
\noindent Since $A\neq \mathbb C 1$ then one can find $1\neq g\in \Gamma$ such that if we denote by $\Omega =vC_\Gamma (\langle g \rangle )$ then one can find $0\neq z\in A'\cap \mathcal Z(L(\Omega))$ such that $A z \subseteq L(\Omega)$. As $\Gamma$ is torsion free then $\langle g \rangle<\Gamma$ is an infinite cyclic subgroup. As $\Omega= vC_{\G}(\langle g \rangle)$, from definitions there is an increasing  sequence of finitely generated subgroups $\cdots \Omega_n\leqslant \Omega _{n+1}\leqslant \cdots \leqslant \Omega$ such that $\cup_n \Omega_n =\Omega$ and the centralizers $C_{\langle g \rangle }(\Omega _n )\leqslant \langle g\rangle$ has finite index for all $n$.  Altogether these imply that $L(\Omega)$ has property \emph{Gamma} of Murray and von Neumann. Now assume that $\Omega$ is non-amenable. Then using \cite[Theorem 3.1]{CSU13} we must have that the quasi-cocycle $q$ is bounded on $\Omega$. In particular, the corresponding deformation $e^\perp_M \circ V_t \ra 0$ on the unit ball of $L(\Omega)$. Thus $e^\perp_M \circ V_t \ra 0$ on the unit ball of $Af$. Then arguing exactly as in the proof of Theorem~\ref{thm:invariantabelian}, we reach a contradiction.
	 Hence, $\Omega$ must be amenable. As $Az \subseteq L(\Omega)$, and as $\G$ normalizes $A$, we get $A \preceq_{L(\G)}^s L(\Omega)$. Using Proposition~\ref{prop:hpvcor7} this further implies that $|\Omega \cap g \Omega g^{-1}|= \infty$, as $A$ is diffuse. This however implies that $\Omega$ is an s-normal subgroup of $\G$, which is impossible by \cite[Lemma 3.3, Theorem 3.4]{T09}.
 \vskip 0.05in 
 \noindent Thus $N $ is a factor. If it is nonamenable the conclusion follows from Theorem~\ref{thm:nononameninv}. So assume $N$ is amenable. By Theorem \ref{thm: invariant factors} one can find a normal subgroup $\Sg\lhd \Gamma $ such that $N\vee (N'\cap L(\Sg))=L(\Sg) $. Since $\Gamma$ does not have amenable normal subgroup, it follows that the FC-center of $\Sigma$ is trivial and hence $\Sigma$ is icc. Moreover, we have that $\Sigma$ is a non-amenable. Recall that two commuting amenable subalgebras generate an amenable subalgebra in a $\rm II_1$ factor. Thus $N'\cap L(\Sigma)$ is nonamenable  and by Theorem~\ref{thm:nononameninv}, $N'\cap L(\Sigma)=L(\Omega)$ for some icc non-amenable subgroup $\Omega<\Sigma$. Using factoriality we also have that $N= L(\Sigma )\cap (N'\cap L(\Sigma))'= L(\Sigma )\cap L(\Omega)'\subseteq L(vC_\Sigma(\Omega))$.  However, using Ge's splitting theorem we have that $L(vC_\Sigma(\Omega))= N\bar\otimes P $ where $P= L(vC_\Sigma(\Omega))\cap L(\Omega)$. However, since $vC_\Sigma(\Omega)\cap \Omega $ is the FC-center of $\Omega$ and $\Omega$ is icc we get $vC_\Sigma(\Omega)\cap \Omega =1$ and hence $P= \mathbb C 1$. In conclusion, $N= L(vC_{\Sigma}(\Omega))$; in fact, it is easy to see we have $vC_\Sigma(\Omega)=C_\Sigma(\Omega)$. In particular, this implies the desired conclusion. \end{proof}

\vskip 0.07in 
\noindent \textbf{Remark:} The argument in the last paragraph of the above proof in fact demonstrates that there are no $\Gamma$-invariant amenable subfactors inside $L(\G)$. Indeed, as $\G$ is $C^*$-simple, and $\Sg \lhd \G$, $\Sg$ is $C^*$-simple. Hence so are $\Omega$ and $C_\Sigma(\Omega)$. As $N$ is amenable, this forces $C_\Sigma(\Omega)$ to be amenable, which in turn forces $C_\Sigma(\Omega)$ to be trivial, as the amenable radical of $\Sg$ is trivial.
\vskip 0.07in 
\noindent Now we derive the proof of Theorem \ref{a} under the assumptions of item 1). Our approach follows closely the general strategy developed in the proof of \cite[Lemma 3.1, Theorem 3.2]{CP10} involving analysis of unbounded derivations (see also \cite[Theorem 4.1]{Va10} for the version of this analysis in the context of Gaussian deformations) and we include all the details just for the reader's convenience.

\begin{thm}
Let $\G$ be any icc group that admits an unbounded, non-proper, $1$-cocycle into a mixing representation; in particular, $\G$ can be any non-amenable group $\G$ that has positive first $L^2$-Betti number and admits an infinite amenable subgroup (e.g.\ when $\G$ is torsion free). \\ Then  $L(\G)$ satisfies the ISR property.
\end{thm}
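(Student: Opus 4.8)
The plan is to follow the template of the proof of Theorem~\ref{isrquasi}, substituting for the exactness and weakly-$\ell^2$ inputs the direct cohomological analysis of a $1$-cocycle into a mixing representation, in the spirit of \cite{CP10,Va10}.

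I would first dispose of the ``in particular'' clause: if $\G$ is icc, non-amenable, has $\beta_1^{(2)}(\G)>0$ and contains an infinite amenable subgroup $H$, then \cite[Lemma 3.3, Theorem 3.4]{T09} together with \cite[Theorem 2.6]{PT11} yield an unbounded $1$-cocycle $b\colon\G\to\ell^2\G$; since $\G$ is icc the representation $\ell^2\G$ is mixing, and $b$ can be chosen bounded on $H$, hence non-proper. So it suffices to treat the general hypothesis. Fix then an unbounded, non-proper $1$-cocycle $b\colon\G\to\HH_\pi$ with $\pi$ mixing, form the Gaussian dilation $M=L(\G)\subseteq\tilde M=L^\infty(X^\pi)\rtimes\G$ and the associated Gaussian deformation $(V_t)_t$ (of defect $0$), with the properties recorded in Theorem~\ref{propgaussdef}, and let $N\subseteq M$ be a von Neumann subalgebra with $\G\subseteq\mathcal N_M(N)$.

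The heart of the matter is to show $N$ is a factor. Put $A=\mathcal Z(N)$, which is again $\G$-invariant, and by Corollary~\ref{thm:invariantabelian} is either $\mc1$ or diffuse; assume it is diffuse. As in the proof of Theorem~\ref{isrquasi}, let $\Sigma\lhd\G$ be the smallest normal subgroup with $A\subseteq L(\Sigma)$: using \cite[Theorem 7.1]{CKP15} and the normality of $\Sigma$ one sees $FC(\Sigma)=1$, so $\Sigma$ is icc, and were $b|_\Sigma$ bounded then $V_t\to\mathrm{id}$ uniformly on $(L(\Sigma))_1\supseteq(A)_1$, contradicting diffuseness of $A$ by Theorem~\ref{atomiccorner}; hence $b|_\Sigma$ is an unbounded cocycle into the mixing representation $\pi|_\Sigma$, and after renaming we may assume $A\not\subseteq L(\La)$ for every proper subgroup $\La<\G$. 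Since $A\neq\mc1$ there is $g\in\G\setminus\{1\}$ with $E_A(u_g)\neq0$; applying Lemma~\ref{inneramenablepiece} with $\Omega=vC_\G(\langle g\rangle)$ produces $0\neq f\in A$ with $Af\subseteq L(\Omega)$ and $0\neq z_g\in A'\cap\mathcal Z(L(\Omega))$ with $Az_g\subseteq L(\Omega)$; moreover $L(\Omega)$ has property Gamma since $g$ is virtually central in $\Omega$. Now split into cases. If $\Omega$ is non-amenable, then — this is the step where the weakly-$\ell^2$/exactness hypotheses of the quasicocycle argument must be replaced by the closable-derivation analysis of \cite{CP10,Va10} for the Gaussian deformation of a mixing representation — one shows $b$ is bounded on $\Omega$ (exploiting non-amenability of $\Omega$ and property Gamma of $L(\Omega)$); hence $V_t\to\mathrm{id}$ uniformly on $(Af)_1$, and since $\G$ normalizes $A$ one upgrades this (via $\bigvee_{h\in\G}u_hfu_h^{*}=1$, which holds because $\bigvee_{h\in\G}u_hfu_h^{*}$ is a nonzero $\G$-invariant, hence central, projection of the factor $M$) to $V_t\to\mathrm{id}$ uniformly on $(A)_1$; Theorem~\ref{atomiccorner} then contradicts diffuseness of $A$. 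If instead $\Omega$ is amenable, then $Az_g\subseteq L(\Omega)$ together with $\G$-invariance of $A$ gives $A\preceq^{s}_M L(\Omega)$; as $A$ is regular in $M$ (its normalizer contains all $u_g$), Proposition~\ref{prop:hpvcor7} forces $\bigcap_i g_i\Omega g_i^{-1}$ to be infinite for every $g_1,\dots,g_n\in\G$, so $\Omega$ is an infinite amenable s-normal subgroup of $\G$, contradicting \cite[Lemma 3.3, Theorem 3.4]{T09} in the presence of the unbounded cocycle $b$ into the mixing representation $\pi$. It is here, and in the passage to $\Sigma$, that non-properness of $b$ is exploited, in contrast with proper cocycles coming from a-T-menable groups, for which the ISR property can genuinely fail (e.g.\ $L(\mathbb Z\wr F_2)$). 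Hence $A=\mc1$ and $N$ is a factor.

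Finally, with $N$ a factor, I would mimic the last paragraph of the proof of Theorem~\ref{isrquasi}: Theorem~\ref{thm: invariant factors} gives $\Sigma\lhd\G$ with $N\vee(N'\cap L(\Sigma))=L(\Sigma)$; as $\G$ has no nontrivial amenable normal subgroup (any such, being infinite amenable and normal hence s-normal, would again contradict \cite{T09} via $b$), $\Sigma$ is icc and $N'\cap L(\Sigma)$ is a $\G$-invariant factor. At least one of $N$, $N'\cap L(\Sigma)$ is non-amenable (two commuting amenable subalgebras cannot generate the non-amenable factor $L(\Sigma)$); applying Popa's spectral gap argument in the form valid for the malleable deformation of the weakly mixing Gaussian action — again through the derivation analysis, since Theorem~\ref{propgaussdef}(d) requires the unavailable weakly-$\ell^2$ hypothesis — one obtains $V_t\to\mathrm{id}$ uniformly on the unit ball of the relative commutant in $L(\Sigma)$ of the non-amenable one. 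This relative commutant is the other algebra, is $\G$-invariant, and by Theorem~\ref{atomiccorner} is not diffuse, hence by Corollary~\ref{thm:invariantabelian} and Proposition~\ref{nofindiminv} equals $\mc1$. Either way $N=L(\Sigma)$ (with $\Sigma=\{1\}$ when $N$ turns out amenable), which is the ISR property. The principal obstacle throughout is precisely that $\pi$ is only mixing, not weakly-$\ell^2$, and $\G$ is not assumed exact, so the spectral-gap and word-length tools underlying the weakly-$\ell^2$ results must be rebuilt by hand from the closable derivation attached to $b$, following \cite{CP10,Va10}.
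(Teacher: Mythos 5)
Your handling of the ``in particular'' clause and your final paragraph (once $N$ is known to be a factor) agree with the paper, but the core of your argument --- the proof that $\mathcal Z(N)$ cannot be diffuse --- departs from the paper's and has genuine gaps. The paper does \emph{not} revisit the $vC_\G(\langle g\rangle)$ analysis of Theorem \ref{isrquasi} here; it uses non-properness directly. Since $\mathfrak B_C=\{g\in\G:\|q(g)\|\le C\}$ is infinite for some $C>0$, one picks a sequence $(g_n)_n\subset\mathfrak B_C$, forms $u^\omega=(u_{g_n})_n$ in an ultrapower, and observes that $c_n:=u_{g_n}au_{g_n}^*$ lies in $\mathcal U(A)$ (by $\G$-invariance of $A=\mathcal Z(N)$) and is supported, up to $\|\cdot\|_2$-approximation, on sets where $q$ is uniformly bounded. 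Following \cite[Lemma 3.1]{CP10} one splits into the two cases $E_M(u^\omega au^{\omega*})=0$ and $\|E_M(u^\omega au^{\omega*})\|_2>0$; in each, the commutation $c_nb=bc_n$ together with mixingness of $\pi$ forces $e_M^\perp\circ V_t\to0$ uniformly on $(A)_1$ (or first on a corner $pA$, upgraded as in Claim \ref{claim:fromcornertoall}), and Theorem \ref{atomiccorner} then contradicts diffuseness. No appeal to $\Omega=vC_\G(\langle g\rangle)$, to property Gamma, or to boundedness of $q$ on non-amenable subgroups is made.

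Your route breaks at two points. First, this theorem does not assume $\G$ torsion free, so the element $g$ with $E_A(u_g)\neq0$ may have finite order; then $\langle g\rangle$ is finite, $\Omega=vC_\G(\langle g\rangle)=\G$, the inclusion $Az_g\subseteq L(\Omega)$ carries no information, and the central sequences you invoke for property Gamma of $L(\Omega)$ (coming from powers of $g$) do not exist. In Theorem \ref{isrquasi} this is exactly where torsion-freeness enters, and you cannot import that step. Second, in the non-amenable-$\Omega$ case you need $q$ to be bounded on $\Omega$; the paper obtains the analogous statement in Theorem \ref{isrquasi} from \cite[Theorem 3.1]{CSU13}, which requires a weakly-$\ell^2$ representation and exactness --- neither is assumed here --- and your proposal to ``rebuild the spectral-gap and word-length tools by hand from the closable derivation'' is precisely the missing content rather than a step you can defer to \cite{CP10,Va10}. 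A telling symptom is that your factoriality argument never actually uses non-properness of the cocycle: in the paper that hypothesis is indispensable, as it supplies the infinite set $\mathfrak B_C$ on which the ultrapower/mixing argument runs.
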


\begin{proof} Let $M= L(\G)$. Let $A =\mathcal Z(N)$ be its center and notice it is $\G$-invariant as well. Assume, by contradiction that $A$ is a diffuse.	\vskip 0.05in 
\noindent  Since $q$ is nonproper, we can find $C>0$ such that $\mathfrak B_C:=\{g \in \G: \|q(g)\| \leq C \}$ is infinite. Fix $(g_n)_n \subset \mathfrak B_C$ an infinite set of group elements. Let $\omega$ be a nonprincipal ultrafilter, and let $E_M: M^{\omega} \rightarrow M$ denote the unique trace preserving conditional from the ultrapower $M^{\omega}$ onto $M$. We denote by $ u^{\omega}=(u_{g_n})_n \in \euu(M^{\omega})$. 
\vskip 0.07in
\noindent Just as in \cite{CP10}, we continue by splitting the proof into two cases that we analyze separately.
    \vskip 0.1in
	\noindent \textbf{Case I:} There exists $a \in \euu(A)$ such that $E_M( u^{\omega}a u^{\omega \ast})= \lim_{n \rightarrow \omega}u_{g_n}au_{g_n}^{\ast}=0$.   
  \vskip 0.07in   
 \noindent Let $c_n = u_{g_n}au_{g_n}^* \in \euu(A)$.   Fix $b \in \euu(A)$. Using that $c_n b=bc_n$ for all $n$ and then  $V_t(xy)=V_t(x)V_t(y)$ for all $x,y$ basic estimates show that for all $n$ we have
	\begin{equation}\label{eq:conv1}\begin{split}
	\|e_M^{\perp} \circ V_t (b)\|_2^2 &= |\langle c_n e_M^{\perp} \circ V_t(b), c_n e_M^{\perp} \circ V_t(b) \rangle | 
	 \\&\leq |\langle c_ne_M^{\perp} \circ V_t(b), e_M^{\perp} \circ V_t(c_n b) \rangle| + \| c_n V_t(b)-V_t(c_n b) \|_2 \\
	 & = |\langle c_n e_M^{\perp} \circ V_t(b)c_n ^*, e_M^{\perp} \circ V_t(b) \rangle|+ \| c_nV_t(b)-V_t(c_n b) \|_2 +\| V_t(b)c_n-V_t(bc_n) \|_2\\
	& \leq |\langle c_n e_M^{\perp} \circ V_t(b)c_n^*, e_M^{\perp} \circ V_t(b) \rangle |+ 2 \| c_n-V_t(c_n ) \|_2 
	 \end{split}	\end{equation} 
    
\noindent Since $g_n\in \mathfrak B_C$ for all $n$ and since $V_t \ra {\rm Id}$ pointwise then the same argument from the proof of part e) in Theorem \ref{propgaussdef} shows that $\sup_n\| V_t(c_n)-c_n\|_2\ra 0$. as $t\ra 0$. Moreover, since $\lim_{n \rightarrow \omega} \|E_M(c_n)\|_2 =0$, mixingness of the representation (see for instance \cite[Lemma 2.5]{CP10} or the earlier results \cite{PS12,Pet09b}) shows that $\lim_n |\langle c_n e_M^{\perp} \circ V_t(b)c_n^*, e_M^{\perp} \circ V_t(b) \rangle |=0$. Using these and taking limit over $n$ in equation~\eqref{eq:conv1} and we get that $e_M^{\perp} \circ V_t \rightarrow 0$ uniformly on $(A)_1$.  This however leads to a contradiction using the last part of the proof of Theorem \ref{thm:nononameninv}.

\vskip 0.1in

\noindent \textbf{Case II:} There are $1>C>0$, $a \in \euu(A)$ so that 
$\|E_M(u^{\omega}au^{\omega \ast})\|_2=\|\lim_{n\ra \omega} u_{g_n}a u_{g_n}^*\|_2= C>0$. 
   \vskip 0.07in  
\noindent Let $c_n = u_{g_n}au_{g_n}^* \in \euu(A)$ and denote by $c^{\omega}=(c_n)_n=u^{\omega}au^{\omega \ast}\in A^\omega$.   As in the proof of \cite[Lemma 3.1, Case II]{CP10}, let $u \in \euu(A'\cap M)$ such that $E_M(c^{\omega})=u|E_M(c^{\omega})|\in A'\cap M$.  Also, let $p \in \mathcal P(A'\cap M)$ be the spectral projection of $|E_M(c^{\omega})|$ corresponding to $[0, 1-\frac{C}{2}]$. Then we have that
$\| |E_M(c^{\omega})| \cdot (1-p)\|_2 \geq (1-\frac{C}{2})\| 1-p\|_2 $. Arguing as in proof of \cite[Lemma 3.1, Case II]{CP10} this further implies
$\|p\|_2 \geq 1-\|1-p\|_2 \geq 1-\frac{2}{2-C}\| E_M(c^{\omega})\|_2 \geq 1-\frac{2}{2-C}(1-C)=\frac{C}{2-C}$. Hence, $\tau(p) = \|p\|_2\geq \frac{C}{2-C}$; in particular $p\neq 0$.  Moreover, if $y:=E_M(u^* c^{\omega})=|E_M(c^\omega)| \in M$, one can also check that

 \begin{equation}\label{snormbound}\| y p   \|_{\infty} \leq 1-\frac{C}{2}.\end{equation} 
 
 \noindent Let   $s_n:=u^*c_n-y$ for all $n$. 
 
 \noindent Now fix $b \in \euu(A)$. Using $c_n b =bc_n$ for all $n$ together with basic calculations as in the previous case and \eqref{snormbound} we get
   
\begin{align*}
\|pe_M^{\perp} \circ V_t(b)\|_2^2 &= |\langle u^* c_n pe_M^{\perp} \circ V_t(b), u^* c_n pe_M^{\perp} \circ V_t(b) \rangle | \\
&\leq |\langle s_n pe_M^{\perp} \circ V_t(b), u^* c_n pe_M^{\perp} \circ V_t(b) \rangle| + |\langle y pe_M^{\perp} \circ V_t(b), u^* c_n pe_M^{\perp} \circ V_t(b) \rangle| \\
& \leq |\langle us_n pe_M^{\perp} \circ V_t(b), p  e_M^{\perp} \circ V_t( c_nb) \rangle | +\| c_n V_t(b)-V_t(c_n b) \|_2+ |\langle ype_M^{\perp} \circ V_t(b), u^* c_n pe_M^{\perp} \circ V_t(b) \rangle | \\
& \leq |\langle us_n pe_M^{\perp} \circ V_t(b)c_n^*, p  e_M^{\perp} \circ V_t( b) \rangle |+ \| V_t(b)c_n-V_t(bc_n ) \|_2\\&+\| c_n V_t(b)-V_t(c_n b) \|_2+ |\langle ype_M^{\perp} \circ V_t(b), u^* c_n pe_M^{\perp} \circ V_t(b) \rangle | \\
& \leq |\langle us_n pe_M^{\perp} \circ V_t(b)c_n^*, p  e_M^{\perp} \circ V_t( b) \rangle |+ 2\| c_n-V_t(c_n ) \|_2+ |\langle ype_M^{\perp} \circ V_t(b), u^* c_n pe_M^{\perp} \circ V_t(b) \rangle | \\
& \leq |\langle us_n pe_M^{\perp} \circ V_t(b)c_n^*, p  e_M^{\perp} \circ V_t( b) \rangle |+ 2\| c_n-V_t(c_n ) \|_2\\&+ |\langle ype_M^{\perp} \circ V_t(b), s_n pe_M^{\perp} \circ V_t(b) \rangle |+ \|ype_M^{\perp} \circ V_t(b)\|^2_2\\
& \leq |\langle us_n pe_M^{\perp} \circ V_t(b)c_n^*, p  e_M^{\perp} \circ V_t( b) \rangle |+ 2\| c_n-V_t(c_n ) \|_2\\&+ |\langle ype_M^{\perp} \circ V_t(b), s_n pe_M^{\perp} \circ V_t(b) \rangle |+ \left(1-\frac{C}{2}\right )\|pe_M^{\perp} \circ V_t(b)\|^2_2
\end{align*}

\noindent This further shows that 
\begin{equation}\label{eq:conv2}
     \|pe_M^{\perp} \circ V_t(b)\|_2^2\leq \frac{2}{C}( |\langle us_n pe_M^{\perp} \circ V_t(b)c_n^*, p  e_M^{\perp} \circ V_t( b) \rangle |+\langle ype_M^{\perp} \circ V_t(b), s_n pe_M^{\perp} \circ V_t(b) \rangle |+ 2\| c_n-V_t(c_n ) \|_2)
\end{equation}
  \noindent  Once again  we have $\sup_n\| V_t(c_n)-c_n\|_2\ra 0$, as $t\ra 0$. Since  $\lim_n\|E_M( us_n p)\|_2 = \lim_n\|E_M( s_n p)\|_2=0$ the mixingness of the representation shows that $\lim_n |\langle us_n pe_M^{\perp} \circ V_t(b)c_n^*, p  e_M^{\perp} \circ V_t( b) \rangle |=0$ and $\lim_n |\langle ype_M^{\perp} \circ V_t(b), s_n pe_M^{\perp} \circ V_t(b) \rangle |= 0$. Thus taking the limit over $n$ in equation \eqref{eq:conv2} we get that \begin{equation*}\lim_{t\ra 0} \left(\sup_{b\in \mathcal U(A)}\|p e_M^\perp \circ V_t(b)\|_2\right )
=0.\end{equation*}

\noindent Since $V_t(p)V_t(b)= V_t(pb)$ for all $b$ and $\|p-V_t(p)\|\ra 0$ as $t\ra 0$  this further implies that \begin{equation*}\lim_{t\ra 0} \left(\sup_{b\in \mathcal U(A)}\| e_M^\perp \circ V_t(p b)\|_2\right )
=0.\end{equation*}

\noindent Proceeding as in the proof of Claim \ref{claim:fromcornertoall} this further implies that $\lim_{t\ra 0} \left(\sup_{b\in \mathcal U(A)}\| e_M^\perp \circ V_t(p b)\|_2\right )
=0$. Finally, using the last part of the proof of Theorem \ref{thm:nononameninv}, this leads to a contradiction.  
\vskip 0.06in 
\noindent Thus $N$ is a factor and by Theorem \ref{thm: invariant factors} one can find a normal subgroup $\Sg\lhd \Gamma $ such that $N\vee (N'\cap L(\Sg))=L(\Sg) $. Since $\Gamma$ does not have amenable normal subgroup, it follows that the FC-center of $\Sigma$ is trivial and hence $\Sigma$ is icc. Moreover, we have that $\Sigma$ is a non-amenable and hence at least $N$ or $P=N'\cap L(\Sigma)$ is nonamenable factor. Assume $P$ is nonamenable.  Popa's spectral gap argument implies that

 $$\lim\limits_{t \rightarrow 0}\left(\underset {a \in (N)_1}{\rm sup}\|e_M^{\perp} \circ V_t(a)\|_2\right)  = 0.$$

\noindent By transversality, this implies that $V_t\rightarrow I$, as $t\rightarrow 0$, uniformly on $(N)_1$. Since  $N$ is $\Gamma$ -invariant, Theorem \ref{atomiccorner} implies that  $N$ is not diffuse. Thus it admits a nontrivial atomic corner that is also invariant under the conjugation action by $\Gamma$. Then using Corollary \ref{thm:invariantabelian} we get that $N=\mathbb C1$, as desired. The other case also implies through a similar argument that $N'\cap L(\Sigma)= \mathbb C 1$ and hence $N= L(\Sigma)$, which concludes the proof.\end{proof}

\subsection{Applications to invariant subalgebras of reduced group $C^*$-agebras}
In this subsection we collect together several immediate consequences of our main results to the study of invariant subalgebras or reduced group $C^*$-algebras. To derive our results we first notice the following elementary result.
\begin{lem} \label{lem: vN to C-star}
	Let $\Sigma <\G$ be a countable discrete groups. If $ A \subseteq C^*_r(\G) $ is a $C^*$-subalgebra such that $ A \subseteq L(\Sg)$, then $ A \subseteq C^*_r(\Sg)$.
\end{lem}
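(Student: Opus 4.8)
The plan is to identify $C^*_r(\Sigma)$ with the intersection $C^*_r(\Gamma)\cap L(\Sigma)$ by means of the canonical conditional expectation. Recall that the inclusion $\Sigma\hookrightarrow\Gamma$ induces compatible inclusions $C^*_r(\Sigma)\subseteq C^*_r(\Gamma)$ and $L(\Sigma)\subseteq L(\Gamma)$: the first because the restriction of the left regular representation of $\Gamma$ to $\Sigma$ is unitarily equivalent to a multiple of the left regular representation of $\Sigma$ via the coset decomposition $\ell^2(\Gamma)\cong\ell^2(\Sigma)\otimes\ell^2(\Sigma\backslash\Gamma)$. There is moreover a normal, trace-preserving conditional expectation $E\colon L(\Gamma)\to L(\Sigma)$ determined on the canonical unitaries by $E(u_g)=u_g$ for $g\in\Sigma$ and $E(u_g)=0$ for $g\in\Gamma\setminus\Sigma$. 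Being a conditional expectation, $E$ is unital completely positive, hence contractive for the operator norm on all of $L(\Gamma)$, and in particular on $C^*_r(\Gamma)$.

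First I would check that $E$ maps $C^*_r(\Gamma)$ into $C^*_r(\Sigma)$. Fix $x\in C^*_r(\Gamma)$ and choose finitely supported elements $y_n=\sum_{g\in F_n}c^{(n)}_g u_g\in\mathbb{C}[\Gamma]$ with $\|x-y_n\|_{C^*_r(\Gamma)}\to 0$, which is possible since $\mathbb{C}[\Gamma]$ is norm-dense in $C^*_r(\Gamma)$. Then $E(y_n)=\sum_{g\in F_n\cap\Sigma}c^{(n)}_g u_g\in\mathbb{C}[\Sigma]\subseteq C^*_r(\Sigma)$, and contractivity of $E$ yields $\|E(x)-E(y_n)\|=\|E(x-y_n)\|\leq\|x-y_n\|\to 0$. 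Since $C^*_r(\Sigma)$ is norm-closed inside $C^*_r(\Gamma)$, we conclude $E(x)\in C^*_r(\Sigma)$; hence $E\big(C^*_r(\Gamma)\big)\subseteq C^*_r(\Sigma)$ (the reverse inclusion being clear).

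To finish, take $A\subseteq C^*_r(\Gamma)$ with $A\subseteq L(\Sigma)$ and let $a\in A$. Because $E$ restricts to the identity on $L(\Sigma)$, we have $a=E(a)$; and because $a\in C^*_r(\Gamma)$, the previous step gives $E(a)\in C^*_r(\Sigma)$. Therefore $a\in C^*_r(\Sigma)$, and so $A\subseteq C^*_r(\Sigma)$, as desired. I do not expect any serious obstacle here — the statement is soft functional analysis. The only point requiring a moment's care is keeping the canonical identifications $L(\Sigma)\subseteq L(\Gamma)$ and $C^*_r(\Sigma)\subseteq C^*_r(\Gamma)$ consistent (both induced by $\Sigma\hookrightarrow\Gamma$, realized inside $B(\ell^2\Gamma)$), so that the Fourier-coefficient description of $E$ and the norm-density of $\mathbb{C}[\Gamma]$ are taken with respect to the same ambient algebra.
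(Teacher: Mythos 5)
Your argument is correct and is essentially the paper's own proof: both approximate an element of $A$ in norm by elements of $\mathbb{C}[\Gamma]$, apply the canonical (norm-contractive) conditional expectation $E_{L(\Sigma)}$, which fixes $a$ and sends the approximants into $\mathbb{C}[\Sigma]\subseteq C^*_r(\Sigma)$, and conclude by norm-closedness of $C^*_r(\Sigma)$. The only difference is presentational: you first record that $E$ maps $C^*_r(\Gamma)$ into $C^*_r(\Sigma)$ as a separate step, whereas the paper runs the approximation directly on the fixed element $a$.
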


\begin{proof}
	Fix $a \in A$ and let $a_n \in \mc[\G]$ be a sequence such that $\|a-a_n\|_{\infty} \rightarrow 0$. Since $A \subseteq L(\Sg)$, applying $E_{L(\Sg)}$ we obtain
	
	\begin{equation*}
\|a- E_{L(\Sg)}(a_n)\|_{\infty}=\|E_{L(\Sg)}(a-a_n)\|_{\infty}  \leq \|a-a_n\|_{\infty} \rightarrow 0.
	\end{equation*}
	
\noindent As $a_n \in \mc[\G]$, clearly $E_{L(\Sg)}(a_n) \in C^*_r(\Sg)$ and hence $a \in C^*_r(\Sg)$.\end{proof}

\begin{cor}\label{C*-app1} Let $\G$ be any icc group that satisfies any of the hypotheses of Theorems \ref{b}, \ref{a}, or Theorem \ref{isrquasi}. Let $A \subseteq C^*_r(\G)$ be any $\Gamma$-invariant $C^*$-subalgebra. Then one can find a normal subgroup $\Sigma \lhd \G$ such that $A \subseteq C^*_r(\Sigma)$ and $A''= L(\Sigma)$.  
    
\end{cor}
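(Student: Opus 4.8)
The plan is to combine the von Neumann algebraic conclusion of Theorems \ref{b}, \ref{a}, and \ref{isrquasi} with Lemma \ref{lem: vN to C-star}. First I would set $N = A''$, the von Neumann algebra generated by $A$ inside $L(\G)$. Since $A$ is $\Gamma$-invariant (i.e.\ $\G \subseteq \mathcal N_{C^*_r(\G)}(A) \subseteq \mathcal N_{L(\G)}(A)$), and normalization passes to the weak closure, we get $\G \subseteq \mathcal N_{L(\G)}(N)$. Now invoking whichever of Theorems \ref{b}, \ref{a}, \ref{isrquasi} applies to $\G$, there is a normal subgroup $\Sigma \lhd \G$ with $N = A'' = L(\Sigma)$.

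Next I would upgrade the containment $A \subseteq L(\Sigma)$ to $A \subseteq C^*_r(\Sigma)$. This is exactly the content of Lemma \ref{lem: vN to C-star}: any element $a \in A \subseteq C^*_r(\G)$ is a norm limit of elements $a_n \in \mathbb C[\G]$, and applying the (norm-contractive) conditional expectation $E_{L(\Sigma)}$ gives $a = \lim_n E_{L(\Sigma)}(a_n)$ with each $E_{L(\Sigma)}(a_n) \in \mathbb C[\Sigma] \subseteq C^*_r(\Sigma)$. Hence $a \in C^*_r(\Sigma)$, so $A \subseteq C^*_r(\Sigma)$, and of course $A'' = L(\Sigma)$ as already established.

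Since essentially all of the work is done by the quoted theorems and lemma, there is no real obstacle; the only point requiring a line of care is the very first step, namely that $\mathcal N_{C^*_r(\G)}(A) \subseteq \mathcal N_{L(\G)}(A'')$, which follows because conjugation by a fixed unitary $u_g$ is $\|\cdot\|_\infty$-continuous and hence weak-$*$ continuous, so $u_g A u_g^* = A$ implies $u_g A'' u_g^* = (u_g A u_g^*)'' = A''$. With that observed, the corollary is immediate.

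\begin{proof}
Set $N := A'' \subseteq L(\G)$, the von Neumann subalgebra generated by $A$. For each $g \in \G$, conjugation by $u_g$ is a $\ast$-automorphism of $L(\G)$ that is normal (weak-$\ast$ continuous); hence $u_g A u_g^* = A$ implies $u_g N u_g^* = u_g A'' u_g^* = (u_g A u_g^*)'' = A''= N$. Therefore $\G \subseteq \mathcal N_{L(\G)}(N)$, and by whichever of Theorems \ref{b}, \ref{a}, or \ref{isrquasi} applies to $\G$, there exists a normal subgroup $\Sigma \lhd \G$ with $N = A'' = L(\Sigma)$. In particular $A \subseteq L(\Sigma)$, and since $A \subseteq C^*_r(\G)$, Lemma \ref{lem: vN to C-star} gives $A \subseteq C^*_r(\Sigma)$. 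This is the desired conclusion.
\end{proof}
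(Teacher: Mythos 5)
Your proposal is correct and follows exactly the paper's own argument: pass to $N=A''$, observe $\Gamma$-invariance survives taking the bicommutant, apply the relevant theorem to get $N=L(\Sigma)$, and then use Lemma \ref{lem: vN to C-star} to conclude $A\subseteq C^*_r(\Sigma)$. The only difference is that you spell out the (routine) verification that normalization passes to the weak closure, which the paper leaves implicit.
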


\begin{proof} Let $B = A''\subseteq L(\G)$, be the von Neumann algebra generated by $A$ inside $C^*_r(\G)''=L(\G)$. Since $A$ is $\Gamma$-invariant one can see that $B$ is a $\Gamma$-invariant von Neumann subalgebra of $L(\G)$. Using the conclusions of Theorems \ref{b}, \ref{a}, or Theorem \ref{isrquasi}, one can find a normal subgroup $\Sigma \lhd \G$ such that $B=L(\Sigma)$. Using Lemma \ref{lem: vN to C-star} we also have that $A \subseteq C^*(\Sigma)$, which gives the desired conclusion.\end{proof}

\begin{cor}Let $\G$ be any icc group as in Corollary \ref{C*-app1}. Let $A \subseteq C^*_r(\G)$ be any $\Gamma$-invariant $C^*$-subalgebra with expectation. Then one can find a normal subgroup $\Sigma \lhd \G$ such that $A= L(\Sigma)$.  \end{cor}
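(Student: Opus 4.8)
The plan is to reduce the statement to Corollary \ref{C*-app1} by using the expectation hypothesis to upgrade the containment $A \subseteq C^*_r(\Sigma)$ into an equality $A = L(\Sigma)$. First I would invoke Corollary \ref{C*-app1}: since $A \subseteq C^*_r(\G)$ is a $\Gamma$-invariant $C^*$-subalgebra, there is a normal subgroup $\Sigma \lhd \G$ with $A \subseteq C^*_r(\Sigma)$ and $A'' = L(\Sigma)$, where the double commutant is taken inside $L(\G)$. So it remains only to show that $A$, which generates $L(\Sigma)$ weakly and admits a conditional expectation $E : C^*_r(\G) \to A$, must in fact coincide with $L(\Sigma)$. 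But wait — $A$ is a $C^*$-algebra sitting inside $C^*_r(\G)$, which is norm-separable and has no nonzero elements outside; so literally $A = L(\Sigma)$ cannot hold unless $\Sigma$ is finite. I suspect the statement should be read as $A'' = L(\Sigma)$ together with $A = C^*_r(\Sigma)$, or the ambient algebra in the conclusion is meant at the von Neumann level; I will interpret the conclusion as the natural one: $A = C^*_r(\Sigma)$ (equivalently $A = A'' \cap C^*_r(\G) = L(\Sigma) \cap C^*_r(\G) = C^*_r(\Sigma)$).

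Under this reading the key step is to show that the expectation $E$ forces $A$ to be all of $C^*_r(\Sigma)$, not merely a proper subalgebra of it. Here is the argument I would run. By Corollary \ref{C*-app1} we already have $A \subseteq C^*_r(\Sigma)$ and $A'' = L(\Sigma)$ inside $L(\G)$. Restricting the canonical trace $\tau$ of $L(\G)$ to $A$, the GNS construction gives $A'' = L(\Sigma)$ acting on $L^2(A,\tau) = L^2(L(\Sigma),\tau)$; so the weak closure of $A$ in its own trace representation is exactly $L(\Sigma)$. Now the composition $\tau \circ E : C^*_r(\G) \to \mathbb{C}$ is a state extending $\tau|_A$, and since $E$ is a conditional expectation onto $A$, for every $\sigma \in \Sigma$ the element $u_\sigma \in C^*_r(\Sigma) \subseteq C^*_r(\G)$ satisfies $E(u_\sigma) \in A$. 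The crucial claim is that $E(u_\sigma) = u_\sigma$ for all $\sigma \in \Sigma$, which immediately gives $C^*_r(\Sigma) = \overline{\mathrm{span}}^{\,\|\cdot\|}\{u_\sigma : \sigma \in \Sigma\} \subseteq A$ and hence equality. To see this, write $b_\sigma := E(u_\sigma) \in A \subseteq L(\Sigma)$; since $E$ is trace-preserving (because $\tau\circ E$ and $\tau$ agree — both are the unique trace on the simple nuclear-free context, or more simply because $E$ being a norm-one projection onto $A$ and $\tau$ being $\Sigma$-conjugation invariant forces $\tau \circ E = \tau$ on $C^*_r(\Sigma)$), we get $\tau(b_\sigma^* u_\sigma) = \tau(E(u_\sigma)^* u_\sigma)=\tau(E(u_\sigma)^* E(u_\sigma)) + \tau(E(u_\sigma)^*(u_\sigma - E(u_\sigma)))$, and using $E(E(u_\sigma)^* u_\sigma) = E(u_\sigma)^* E(u_\sigma)$ together with $\tau\circ E=\tau$ the cross term vanishes, so $\|E(u_\sigma)\|_2 \le \|u_\sigma\|_2 = 1$ with equality forcing $E(u_\sigma)=u_\sigma$ by the conditional-expectation Pythagorean identity $\|u_\sigma\|_2^2 = \|E(u_\sigma)\|_2^2 + \|u_\sigma - E(u_\sigma)\|_2^2$. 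The equality $\|E(u_\sigma)\|_2 = 1$ is where I need to work: since $\|E\| = 1$ as a map on the $C^*$-level one only gets $\|E(u_\sigma)\|_\infty \le 1$, not the $\|\cdot\|_2$ estimate directly.

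The cleanest route to $\|E(u_\sigma)\|_2 = 1$, and the step I expect to be the main obstacle, is the following. Consider the $\Gamma$-invariance (hence $\Sigma$-invariance) of $A$ and the fact that $E$ can be averaged to be equivariant: replacing $E$ by a weak-$*$ limit of $\sigma \mapsto \mathrm{Ad}(u_\sigma) \circ E \circ \mathrm{Ad}(u_\sigma^{-1})$ over the Følner-type or invariant-mean considerations is not available since $\Sigma$ need not be amenable, so instead I would use that $A'' = L(\Sigma)$ directly: the normal extension $\bar E : L(\G) \to A'' = L(\Sigma)$ of $E$ (which exists because $E$ is trace-preserving, hence extends to the $L^2$-completion and restricts back to a normal conditional expectation) must be the canonical trace-preserving conditional expectation $L(\G) \to L(\Sigma)$, since that is the \emph{unique} trace-preserving conditional expectation onto $L(\Sigma)$. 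In particular $\bar E(u_\sigma) = u_\sigma$ for $\sigma \in \Sigma$, and since $\bar E$ extends $E$ we conclude $E(u_\sigma) = u_\sigma$, giving $u_\sigma \in A$ for all $\sigma \in \Sigma$ and therefore $C^*_r(\Sigma) \subseteq A$. Combined with $A \subseteq C^*_r(\Sigma)$ from Corollary \ref{C*-app1} this yields $A = C^*_r(\Sigma)$. The one subtlety to verify carefully is that $E$ is automatically trace-preserving: this follows because $\tau \circ E$ is a $\Gamma$-invariant state on $C^*_r(\G)$ extending the trace on the $\Gamma$-invariant subalgebra $A$, and by $\Gamma$-invariance together with the fact that $C^*_r(\G)$ has a \emph{unique} $\Gamma$-invariant (equivalently, tracial) state when $\G$ is icc, one gets $\tau \circ E = \tau$; alternatively one appeals to the uniqueness of the trace-preserving conditional expectation. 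I would present the proof in that order: cite Corollary \ref{C*-app1}; extend $E$ to a normal expectation $\bar E : L(\G) \to A'' = L(\Sigma)$ using trace-preservation; identify $\bar E$ with the canonical expectation by uniqueness; deduce $u_\sigma \in A$; conclude $A = C^*_r(\Sigma)$.
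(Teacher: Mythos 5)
Your reduction to Corollary \ref{C*-app1} and your reading of the conclusion as $A=C^*_r(\Sigma)$ both match the paper (the stated $A=L(\Sigma)$ is indeed impossible for infinite $\Sigma$; the paper's own proof ends with $A=C^*_r(\Sigma_0)$ and $\Sigma_0=\Sigma$). However, the core of your argument rests on the claim that the given conditional expectation $E\colon C^*_r(\G)\to A$ is automatically trace-preserving, and this is a genuine gap. The hypothesis is only that \emph{some} conditional expectation exists; the paper explicitly flags ``we are not assuming $E$ is trace preserving.'' Your justification does not close this: $\tau\circ E$ is a $\Gamma$-invariant state only if $E$ intertwines the conjugation action, i.e.\ $E\circ\mathrm{Ad}(u_g)=\mathrm{Ad}(u_g)\circ E$, which is not assumed (only that $A$ itself is $\Gamma$-invariant); and even granting invariance, what you would need is uniqueness of $\Gamma$-invariant states, which neither follows from icc alone nor is what the unique-trace property of $C^*$-simple groups provides, since $\tau\circ E$ is not a priori tracial. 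Without trace-preservation, the normal extension $\bar E$ to $L(\G)$ and the resulting identification $E(u_\sigma)=u_\sigma$ are unavailable, and the whole chain collapses.

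The paper circumvents this issue entirely with a more elementary relative-commutant argument: since $\mathcal Z(L(\Sigma))=\mathbb C$, one has $A'\cap C^*_r(\Sigma)\subseteq A'\cap L(\Sigma)=\mathbb C$; for $g\in\Sigma$ the automorphism $\mathrm{ad}(u_g)$ restricts to an automorphism of $A$, so $u_g^*E(u_g)$ commutes with $A$ and hence $E(u_g)=c_gu_g$ for a scalar $c_g$. Whenever $c_g\neq 0$ this forces $u_g\in A$. Setting $\Sigma_0=\langle g\in\Sigma : c_g\neq 0\rangle$ and using only that $E$ is norm-decreasing, a norm-density argument gives $A=C^*_r(\Sigma_0)$, and then $A''=L(\Sigma)$ forces $\Sigma_0=\Sigma$. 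To salvage your approach you would have to prove trace-preservation of $E$ by some independent argument; otherwise the commutant route is the one that actually works.
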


\begin{proof}
By Corollary~\ref{C*-app1}, there exists $\Sigma \lhd \G$ such that $A \subseteq C^*_r(\Sigma)$, and $A''= L(\Sigma)$. Moreover, from the proofs of the Theorems \ref{b}, \ref{a}, and \ref{isrquasi} it follows that the center $Z(L(\Sigma))=\mathbb C$. Therefore, $A' \cap C^*_r(\Sigma) \subseteq A' \cap L(\Sigma)= \mathcal Z(L(\Sigma))=\mathbb C$. 
\vskip 0.07in 
\noindent Let $E: C^*_r(\G) \rightarrow A$ be a conditional expectation. (We are not assuming $E$ is trace preserving). Notice that $E$ restricts to a conditional expectation $E:C^*_r(\Sigma) \rightarrow A$. Fix $g \in \Sigma$ and consider the automorphism $\theta(x)={\rm ad} (u_g)$ of  $A$. As $\Sigma$ normalizes $A$, we get that $\theta(x)u_g=u_gx$ for all $x \in A$. Hence, $u_g^*E(u_g)x=xu_g^*E(u_g)$ for all $x \in A$, which implies that $u_g^*E(u_g) \in A' \cap C^*_r(\Sigma)= \mathbb C$. Thus, \begin{equation}\label{proj1} E(u_g)=c_gu_g,\text{ where }c_g \in \mathbb C.\end{equation} In particular, this yields that whenever $c_g \neq 0$, then $u_g \in A$.
\vskip 0.07in

\noindent Let $S=\{g \in \Sigma: E(u_g) \neq 0 \}$ and denote by $\Sigma_0 :=\langle S\rangle< \G$, the subgroup generated by $S$.
By the previous paragraph, we get that $C^*_r(\Sigma_0) \subseteq A\subseteq C^*_r(\Sigma)$. In particular, we have $\Sigma_0\leqslant \Sigma$.
\vskip 0.07in

\noindent  Next we claim that $A=C^*_r(\Sigma_0)$. Towards this fix $\varepsilon >0$ and $a\in A$. As $\mathbb C [\Sigma]$ is norm dense in $C^*_r(\Sigma)$, we can find a finite set $F \subset \Sigma$  such that   $\|a-\sum_{g \in F}a_gu_g\| < \varepsilon$.  As $E$ is norm-decreasing, this estimate together with \eqref{proj1} imply that $\varepsilon> \|a-\sum_{g \in F}a_gu_g\|\geq \|E(a -\sum_{g \in F}a_gu_g)\|= \|a - \sum_{g\in F\cap \Sigma_0} a_g c_g u_g\|$. Since this holds for all $\varepsilon>0$ and $a\in A$ it follows that $\mathbb C [\Sigma_0]$ is norm dense in $A$, which establishes our claim.
\vskip 0.07in
\noindent Finally, since $A''=L(\Sigma)$ and $C^*_r(\Sigma_0)''=L(\Sigma_0)$, we get $\Sigma_0 =\Sigma$, thereby finishing the proof.\end{proof}


\subsection{Final remarks and open problems }

We believe that the ISR property of $L(\G)$ is a condition that is more intrinsic to a specific algebraic structure of $\G$ which happens to be implicit in both cases, when $\G$ is a lattice in a higher rank Lie group or when it satisfies a negative curvature property. Namely, we believe $L(\G)$ satisfies the ISR property whenever $\G$ has trivial amenable radical. More precisely, we conjecture the following

\begin{conjecture} \label{conj} Let $\G$ be an icc group and let
	 $A \subset L(\G)$ be a diffuse abelian von Neumann subalgebra such that $\mathcal N_{L(\G)}(A)\subseteq \G$. Then  one can find an amenable normal subgroup $\Sigma \lhd \G$ such that $A \subseteq L(\Sigma)$.   
\end{conjecture}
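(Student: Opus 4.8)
A natural route toward Conjecture \ref{conj} is to prove that the Fourier support $S:=\{g\in\G:E_A(u_g)\neq 0\}$ of $A$ lies inside the amenable radical $\mr(\G)$ of $\G$: since every element of $A$ has Fourier support contained in $S$, this would give $A\subseteq L(\mr(\G))$, and $\mr(\G)$ is normal and amenable, as wanted. A first reduction: as $A$ is $\G$-invariant, $S$ is conjugation-invariant, so $\Sg_0:=\langle S\rangle\lhd\G$ is the smallest normal subgroup of $\G$ with $A\subseteq L(\Sg_0)$; handling its FC-center (automatically amenable and normal in $\G$) separately, one reduces to the case in which $\Sg_0$ is icc and $A$ is contained in $L(\La)$ for no proper normal $\La\lhd\Sg_0$, and the task becomes to show $\Sg_0$ is amenable.

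The main mechanism I would use is Lemma \ref{inneramenablepiece}. Since $A$ is diffuse, $S\neq\{1\}$; take $1\neq g\in S$ and assume first that $g$ has infinite order. Then the lemma gives a nonzero projection $f\in A$ with $Af\subseteq L(\Omega)$, where $\Omega:=vC_{\G}(\langle g\rangle)\ni g$, and with $\mathrm{ad}(u_g)$ acting trivially on $Af$. Using $\G$-invariance of $A$ together with $f\in A$, each $u_hfu_h^*$ lies in $A$ and $Au_hfu_h^*\subseteq L(h\Omega h^{-1})$; the join $\bigvee_{h\in\G}u_hfu_h^*$ is a $\G$-invariant projection, hence equals $1$ because $\G$ is icc, and decomposing $1$ as a countable orthogonal sum of subprojections of the $u_hfu_h^*$ inside the abelian algebra $A$ forces $A\subseteq L(\langle\langle\Omega\rangle\rangle)$; by minimality of $\Sg_0$, the normal closure of $vC_{\G}(\langle g\rangle)$ must be all of $\Sg_0$. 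In parallel, $Af\subseteq L(\Omega)$ yields $A\prec_{L(\G)}L(\Omega)$, and since $A$ is regular in $L(\G)$, Proposition \ref{prop:hpvcor7} upgrades this to $A\prec_{L(\G)}L\big(\bigcap_i g_i\Omega g_i^{-1}\big)$ for every finite family $g_1,\dots,g_n$; diffuseness of $A$ then forces all these intersections to be infinite, i.e.\ $\Omega$ is s-normal in $\G$.

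The crux --- and the step I expect to be the genuine obstacle --- is to convert this picture into amenability of $\Sg_0$. The natural attempt is a descent: $Af$ is an essentially $\Omega$-invariant diffuse abelian subalgebra of $L(\Omega)$, so one reruns the argument inside $\Omega$, producing a decreasing chain of normal closures of virtual centralizers which one hopes terminates at the amenable radical. Two obstructions stand in the way. First, even granting that all elements in play have infinite order, I do not see how to guarantee that this chain strictly decreases or stabilizes without further input on $\G$: the unconditional results of this paper get past exactly this point only by grafting onto the above skeleton either the $n$-gon combinatorics available for acylindrically hyperbolic groups (Theorems \ref{8gon} and \ref{thm:acylindricallyinvfactor}) or the quasicocycle/array deformations used in the proofs of Theorems \ref{thm:nononameninv} and \ref{isrquasi}, and neither of these is available for an arbitrary icc group with trivial amenable radical. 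Second, and more fundamentally, if every element of $S$ has finite order then $vC_{\G}(\langle g\rangle)=\G$ for each such $g$ and Lemma \ref{inneramenablepiece} carries no content, so the approach collapses entirely; overcoming this torsion regime seems to require a genuinely new idea, the finitely generated infinite torsion group constructed in \cite{Osi09} --- which is close to simple and hence has trivial amenable radical --- being the decisive test case.
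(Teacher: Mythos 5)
The statement you were asked to prove is labelled as a \emph{conjecture} in the paper, and the paper contains no proof of it: the authors state explicitly that establishing it in full generality seems out of reach and single out the torsion group of \cite{Osi09} as the decisive test case. So there is no ``paper's own proof'' to compare against, and your submission, by your own account, is not a proof either --- it is a strategy together with an honest identification of where it breaks down. That identification is accurate. The positive part of your skeleton (apply Lemma \ref{inneramenablepiece} to some infinite-order $g$ in the Fourier support of $A$, use $\G$-invariance to propagate the projection $f$, conclude $A\prec_{L(\G)}L(\Omega)$, and invoke Proposition \ref{prop:hpvcor7} to get s-normality of $\Omega=vC_\G(\langle g\rangle)$) reproduces almost verbatim the content of the authors' own ``intermediate conjecture'' in the final remarks, which they likewise cannot close in general. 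The two obstructions you name are exactly the genuine ones: without negative-curvature input (the $n$-gon combinatorics of Theorem \ref{8gon} or the quasicocycle/array deformations behind Theorems \ref{thm:nononameninv} and \ref{isrquasi}) there is no mechanism forcing your descent through virtual centralizers to terminate in an amenable subgroup, and in the purely torsion regime one can have $vC_\G(\langle g\rangle)=\G$, so Lemma \ref{inneramenablepiece} carries no information.

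Two smaller points on the parts you do assert. First, the opening reduction ``handle the FC-center separately and assume $\Sg_0$ is icc'' is not as clean as stated: in the generality of the conjecture $\G$ may have nontrivial amenable radical, and passing from $A\subseteq L(\Sg_0)$ with $FC(\Sg_0)$ nontrivial to an icc situation requires an argument (one cannot simply quotient inside $L(\G)$); the paper's reductions of this type, e.g.\ in the proof of Theorem \ref{isrquasi}, lean on the ambient group having no nontrivial amenable normal subgroups, which you cannot assume here. Second, your very first sentence presupposes $S\subseteq\mathcal R(\G)$ in order to conclude $A\subseteq L(\mathcal R(\G))$; that inclusion of supports is the entire content of the conjecture, so it is a restatement of the goal rather than a step. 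None of this changes the verdict: the conjecture is open, and your write-up correctly locates the missing ideas rather than supplying them.
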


\noindent Establishing this conjecture in its full generality seems difficult at this time. However, we propose to investigate the following, seemingly easier intermediate conjecture which is already hinted by our previous results in the case torsion free groups $\G$.

\begin{conjecture} \label{conj} Let $\G$ be an icc group and let
	 $A \subset L(\G)$ be a diffuse abelian von Neumann subalgebra such that $\mathcal N_{L(\G)}(A)\subseteq \G$. Then one can find an amenable, s-normal subgroup $\Sigma < \G$ and a nonzero projection $ z\in A'\cap \mathcal Z(L(\Sigma))$ such that $Az \subseteq L(\Sigma)$.   
\end{conjecture}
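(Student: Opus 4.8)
\emph{The plan is to iterate Lemma \ref{inneramenablepiece}.} Since $A$ is diffuse we have $A\neq\mathbb C 1$, and because $\tau(au_g^{\ast})=\tau(aE_A(u_g^{\ast}))$ for every $a\in A$, the vanishing $E_A(u_g)=0$ for all $g\neq 1$ would force every element of $A$ to be a scalar; hence there is $g\in\G\setminus\{1\}$ with $E_A(u_g)\neq 0$. Lemma \ref{inneramenablepiece} then hands us the virtual centralizer $\Omega_1:=vC_\G(\langle g\rangle)$, a nonzero projection $z_1\in A'\cap\mathcal Z(L(\Omega_1))$ with $Az_1\subseteq L(\Omega_1)$, and --- since the projection produced there is of the form $\bigvee_{h\in\Omega_1}u_hfu_h^{\ast}$ --- the corner $Az_1$ is again a diffuse abelian $\Omega_1$-invariant subalgebra, now of $L(\Omega_1)$. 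Thus one application replaces the pair $(\G,A)$ by a pair $(\Omega_1,Az_1)$ of exactly the same shape, with $A$ virtually intertwining into $L(\Omega_1)$, and I would build the whole argument around repeating this passage inside the successively smaller group von Neumann algebras.

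Granting for the moment that repeated application eventually outputs an \emph{amenable} group $\Sigma$ together with $z\in A'\cap\mathcal Z(L(\Sigma))$ and $Az\subseteq L(\Sigma)$, the s-normality of $\Sigma$ comes essentially for free, exactly as in the proof of Theorem \ref{isrquasi}: since $\G$ normalizes $A$, Popa's intertwining criterion (Theorem \ref{corner}) upgrades $Az\subseteq L(\Sigma)$ to $A\prec^{s}_{L(\G)}L(\Sigma)$, whence Proposition \ref{prop:hpvcor7} gives $A\prec_{L(\G)}L(\Sigma\cap g\Sigma g^{-1})$ for every $g\in\G$; as $A$ is diffuse and the group von Neumann algebra of a finite group has no diffuse corner, this forces $|\Sigma\cap g\Sigma g^{-1}|=\infty$ for all $g\in\G$, i.e.\ $\Sigma$ is s-normal in $\G$. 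A routine adjustment of the projection then puts the conclusion in the required form.

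The genuine difficulty --- and what I expect to be the main obstacle --- is the step I have assumed: that iterating the virtual-centralizer construction terminates at an amenable group. When the element $g$ supplied by Lemma \ref{inneramenablepiece} has infinite order, $\Omega=vC_\G(\langle g\rangle)$ is an increasing union of finitely generated subgroups each centralized by a power of $g$, so $L(\Omega)$ has property Gamma of Murray and von Neumann; in the torsion-free regime of Theorem \ref{isrquasi} this was promoted to outright amenability by playing it against the array/quasi-cocycle machinery and \cite{CSU13}, but the Conjecture provides no such deformation on $\G$, and one would instead need a purely group-theoretic control on the heights of towers of virtual centralizers inside an icc group --- something that may well fail without further hypotheses. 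The case in which $g$ has finite order is worse still: then every power of $g$ is trivial, $vC_\G(\langle g\rangle)=\G$, and Lemma \ref{inneramenablepiece} degenerates to the tautology $A\subseteq L(\G)$, so the virtual-centralizer strategy carries no information at all. This is precisely why periodic groups demand a new idea, and why I would regard Osin's infinite icc torsion group from \cite{Osi09} as the decisive test case: on it the entire approach above is vacuous, so a proof of the Conjecture there --- presumably through a direct analysis of the $\G$-invariant characters or measures attached to $A$, or of $\mathcal N_{L(\G)}(A)$ itself --- ought to contain whatever mechanism is missing for the general statement. One could alternatively try to isolate $\Sigma$ as a maximal s-normal subgroup into which $A$ virtually intertwines, rather than constructing it by iteration, but its amenability would present the same obstruction.
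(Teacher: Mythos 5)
This statement is left as an open conjecture in the paper: the authors offer no proof of it, and in fact state explicitly that establishing even the stronger Conjecture preceding it ``seems difficult at this time,'' pointing (in the introduction) to Osin's icc torsion group from \cite{Osi09} as the decisive test case. So there is no proof in the paper to compare yours against, and your proposal does not close the gap either --- which, to your credit, you say plainly. The parts you do carry out are sound and consistent with the techniques the paper actually uses: the existence of $g\neq 1$ with $E_A(u_g)\neq 0$ follows from $A\neq\mathbb C1$ exactly as you say, Lemma \ref{inneramenablepiece} then produces $\Omega=vC_\G(\langle g\rangle)$ with $Az\subseteq L(\Omega)$, and the derivation of s-normality from $A\prec^s_{L(\G)}L(\Sigma)$ via Proposition \ref{prop:hpvcor7} and diffuseness of $A$ is verbatim the argument used in the proof of Theorem \ref{isrquasi}. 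Indeed the paper's closing remark --- that the conjecture holds for torsion-free $\G$ if one only asks for $\Sigma$ with infinite FC-radical in place of amenable --- is essentially the single application of Lemma \ref{inneramenablepiece} that you take as your starting point.

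The genuine gap is exactly where you locate it, and it is not a technicality. First, nothing in the hypotheses provides a deformation ($V_t$, a quasi-cocycle, or an array) against which to play the property-Gamma/inner-amenability of $L(vC_\G(\langle g\rangle))$, so the promotion from ``infinite FC-radical'' to ``amenable'' that Theorem \ref{isrquasi} achieves via \cite[Theorem 3.1]{CSU13} has no analogue here; an iteration of virtual centralizers in a bare icc group has no reason to terminate, let alone to terminate at an amenable subgroup. Second, when the element $g$ supplied by the lemma is torsion, $\langle g\rangle$ is finite, every subgroup of it has finite index, and $vC_\G(\langle g\rangle)=\G$, so the lemma returns the tautology $A\subseteq L(\G)$ and the strategy produces nothing --- this is precisely why the authors single out periodic groups as the obstruction. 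Your proposal is therefore a correct identification of the known partial result and of the open difficulty, but it is not a proof of the conjecture, and no amount of reorganizing the iteration (e.g.\ passing to a maximal s-normal subgroup into which $A$ intertwines) avoids the amenability question, as you note yourself.
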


\noindent Notice that when $\Gamma$ is torsion free this is satisfied if instead of $\Sigma$ amenable we require $\Sigma$ to have infinite FC-radical (and hence inner amenable, etc).

\section*{Acknowledgments}  The authors would like to thank Prof. Jesse Peterson and Prof. Denis Osin for many stimulating discussions and for their support and encouragement. The second author would also like to thank Prof. Mehrdad Kalantar for his kind words regarding the contents of the paper.


\footnotesize{

}





\vskip 0.1in

\noindent \textsc{Department of Mathematics, The University of Iowa, 14 MacLean Hall, Iowa City, IA 52242, U.S.A.}\\
\email {ionut-chifan@uiowa.edu} \\
\textsc{Department of Mathematics, UC Riverside, Skye Hall, Riverside, CA 92521, U.S.A.}\\
\email{sayan.das@ucr.edu}
\\
\textsc{Mathematical Institute, University of Oxford, Radcliffe Observatory, Andrew Wiles Building, Oxford, OX26GG , U.K.}\\
\email{bin.sun@maths.ox.ac.uk }

\end{document}